\newcommand{\F}{\mathbb{F}}
\newcommand{\Q}{\mathbb{Q}}
\newcommand{\R}{\mathbb{R}}
\newcommand{\Z}{\mathbb{Z}}
\newcommand{\dual}[1]{#1^{\sharp}}
\newcommand{\pmaximal}{\rho}
\newcommand{\locmod}{\Gamma}
\numberwithin{equation}{section}
\newtheorem{cor}[equation]{Corollary}
\newtheorem{lem}[equation]{Lemma}
\newtheorem{thm}[equation]{Theorem}
\theoremstyle{definition}
\newtheorem{defn}[equation]{Definition}
\theoremstyle{remark}
\newtheorem{rem}[equation]{Remark}
\DeclareMathOperator{\GL}{GL}
\DeclareMathOperator{\Hom}{Hom}
\DeclareMathOperator{\Res}{Res}
\newcommand{\defi}[1]{{\bf #1}}
\title{Efficient enumeration of quadratic lattices}
\author{Eran Assaf, Victor Chen, Rohan Garg, and Benny Wang}
\date{\today}
\begin{document}

\begin{abstract}
    We present an algorithm to enumerate isometry classes of integral quadratic lattices of a given rank and determinant, and analyze its running time by giving bounds on the number of genus symbols for a fixed rank and determinant. We build on previous work of Kirschmer, Brandhorst, Hanke, and Dubey–Holenstein. We analyze the running times of their respective algorithms and compare the practical performance of their implementations with our own. Our implementations are publicly available.
\end{abstract}

\maketitle

\section{Introduction} \label{sec:introduction}

\subsection{Motivation}
The study of quadratic lattices has been central in mathematics for centuries, with their classification playing a major role. This traces back to Gauss \cite{Gauss}, who classified binary quadratic lattices up to isometry and introduced the concept of a genus, with important applications. Gauss's methods were used as recently as \cite{ConwaySloane} to produce tables of binary quadratic forms.

Later developments by Minkowski, Hasse, and Witt led to Eichler's complete classification of indefinite forms \cite{Eichler}, and these advances went hand in hand with tabulation efforts such as \cite{Jones35} and \cite{BrandtIntrau} for lattices of rank $3$. This continued in the work of Nipp \cite{Nipp}, who used computers to produce tables for ranks $4$ and $5$, now available in Nebe and Sloane's catalog \cite{NebeSloane}. These efforts continue today, with works such as \cite{KirschmerLorch} and \cite{EarnestHaensch} classifying all isometry classes with class number one in ranks $3$ and $4$, respectively.

Our work continues these efforts to tabulate isometry classes of lattices. We present an algorithm that constructs lattices representing each isometry class of a given rank and determinant.

\subsection{Results}

We follow the ideas of \cite{ConwaySloane}*{Chapter 15} to generate all valid genus symbols of a given determinant and rank, and use these to produce lattices in the corresponding genera. This is done by combining unpublished work of Simon Brandhorst, based on \cite{Kirschmer}*{Algorithm 3.5.6} implemented in \cite{SageMath}, with improvements coming from \cite{Hanke}. 
We compare the running time with the algorithm presented in \cite{DubeyHolenstein}.

\begin{thm} \label{thm:main theorem}
There exists an algorithm that, given positive integers $n$ and $D$ such that $\log D \ll n$, outputs an integral lattice in every genus of rank $n$ and determinant $D$, with running time
$
     n^4 D^{\alpha + \frac{\pi (1 + o(1)) }{\log \log D}}
$, where $\alpha = \log_2(3 + \sqrt{17}) - 1 \approx 1.833$.
\end{thm}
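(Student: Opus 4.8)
The plan is to realize the algorithm in two stages and bound the cost of each. First, following \cite{ConwaySloane}*{Chapter 15}, we enumerate all genus symbols of rank $n$ and determinant $D$: for each prime $p \mid D$ we list the admissible $p$-adic Jordan symbols of rank $n$ and $p$-adic determinant $p^{v_p(D)}$ — for odd $p$, a choice of scales $1 \le i_1 < i_2 < \cdots$ with positive ranks and determinant classes; for $p = 2$, additionally the parities and oddities — and then keep exactly the combinations that are globally consistent, i.e.\ whose local determinants multiply to the unit part of $D$ and which, at $p = 2$, satisfy the oddity formula. Second, on each symbol produced we run the construction of \cite{Kirschmer}*{Algorithm 3.5.6} in the form implemented by Brandhorst, together with the improvements of \cite{Hanke} (and with \cite{DubeyHolenstein} used for comparison), which returns an integral lattice in the genus attached to that symbol. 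Correctness is precisely the statement that these symbols parametrize the genera of the given rank and determinant, so the work lies in the running-time bound, which equals the cost of the enumeration plus the number of symbols times the per-symbol construction cost.

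The heart of the argument is the bound on the number $N(n,D)$ of genus symbols. This number is the product over $p \mid D$ of the number of local symbols, and since $\log D \ll n$ forces $v_p(D) \le \log_2 D \ll n$, the rank imposes no constraint on the Jordan type; so it suffices to count, for each $p$ and each $e = v_p(D)$, the $p$-adic symbols. For odd $p$ this count is, up to a $D^{o(1)}$ factor, $\sum_{\lambda \vdash e} 2^{d(\lambda)}$, where $d(\lambda)$ is the number of distinct parts of $\lambda$, and the $p = 2$ count is of the same order. I would then invoke the generating-function identity $\sum_e \bigl( \sum_{\lambda \vdash e} 2^{d(\lambda)} \bigr) x^e = \prod_{k \ge 1} \frac{1 + x^k}{1 - x^k}$ together with a Hardy--Ramanujan--Meinardus-type saddle-point estimate, which yields coefficients of shape $\exp\bigl( \pi \sqrt{e}\,(1 + o(1)) \bigr)$ — this is where the constant $\pi$ enters, via $\log\bigl(\prod_k \frac{1+x^k}{1-x^k}\bigr)\big|_{x = e^{-t}} \sim \frac{\pi^2}{4t}$. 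Finally I would maximize $\prod_{p \mid D} \exp\bigl( \pi \sqrt{v_p(D)} \bigr)$ over factorizations $D = \prod p^{v_p(D)}$ subject to $\sum v_p(D) \log p = \log D$; a Lagrange-multiplier computation shows the optimum uses the primes up to about $\log D$ with $v_p(D)$ of order $(\log\log D / \log p)^2$, and evaluates to $\exp\bigl( \pi (1 + o(1)) \tfrac{\log D}{\log\log D} \bigr) = D^{\pi (1 + o(1)) / \log\log D}$. Enumerating the symbols costs only a $D^{o(1)}$ factor beyond their number.

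For the per-symbol construction I would recall, and re-derive the running-time bound for, \cite{Kirschmer}*{Algorithm 3.5.6} with the refinements of \cite{Hanke}: given a valid genus symbol, a lattice in its genus is produced in time $n^{O(1)} D^{\alpha + o(1)}$, with $\alpha = \log_2(3 + \sqrt{17}) - 1$ emerging as the optimal trade-off between the two competing subroutines in the construction — the search for vectors of prescribed norm modulo prime powers, against the local-to-global correction and reduction of the resulting Gram matrix — so that $2^{\alpha}$ is the positive root of $x^2 - 3x - 2 = 0$. Multiplying this by $N(n,D)$, absorbing the polynomial-in-$n$ factors into $n^4$ (legitimate since $\log D \ll n$) and the residual subpolynomial-in-$D$ factors into the exponent, yields the claimed bound $n^4 D^{\alpha + \pi (1 + o(1)) / \log\log D}$.

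I expect the main obstacle to be the count of genus symbols, and within it two points. The first is pinning the constant in the coefficient asymptotic of $\prod_{k \ge 1} (1 + x^k)/(1 - x^k)$ to exactly $\pi$ while showing that the $p = 2$ symbols — which carry extra oddity and parity data and are tied together by a global congruence, so the local choices are not independent — contribute no larger a factor. The second is carrying out the optimization over $D$ cleanly, which is more delicate than the classical estimate $\max_{m \le N} d(m) = N^{(1 + o(1)) \log 2 / \log\log N}$ precisely because the extremal $D$ is far from squarefree. A secondary technical point is making the construction subroutine run in $D^{\alpha + o(1)}$ uniformly: this requires control on the entry sizes of the intermediate forms and on auxiliary costs such as factoring $D$, which must be kept at $D^{o(1/\log\log D)}$ so that they do not spoil the exponent correction.
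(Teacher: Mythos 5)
There is a genuine gap, and it lies in where you place the factor $D^{\alpha}$. In the paper's argument the per-genus construction (Brandhorst's implementation of Kirschmer's method with Hanke's maximal-overlattice routine) costs only $O\bigl(n^3 \tfrac{\log D}{\log\log D}\bigr)$ under the hypothesis $\log D \ll n$ --- i.e.\ it is polynomial in $n$ and $\log D$. There is no ``optimal trade-off between two competing subroutines'' in that construction producing a cost of $D^{\alpha+o(1)}$, and the equation $x^2-3x-2=0$ does not arise there. It arises instead in the count of \emph{$2$-adic genus symbols}: encoding the Type I/II pattern of the Jordan constituents as a binary string and weighting compartments by their oddity choices and trains by their sign choices gives a transfer-matrix recursion $S_0(k)=3S_0(k-1)+2S_0(k-2)$, so the number of $2$-adic symbols with $\nu_2(D)=k$ is bounded by roughly $\bigl(\tfrac{3+\sqrt{17}}{2}\bigr)^{k}$ times the partition count $p(k)$. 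Since $\bigl(\tfrac{3+\sqrt{17}}{2}\bigr)^{\nu_2(D)} \le D^{\log_2(3+\sqrt{17})-1} = D^{\alpha}$, the entire $D^{\alpha}$ in the theorem comes from the number of genera, not from the cost of realizing one.

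This is exactly the point you flag as your ``first obstacle'' --- showing the $p=2$ symbols ``contribute no larger a factor'' than the odd-prime count --- and it resolves in the opposite direction from what your plan needs: the $2$-adic count is \emph{not} of the same order as the odd count (which is $\sim \tfrac{1}{8k}e^{\pi\sqrt{k}}$ by Meinardus applied to $\prod_i(1+x^i)/(1-x^i)$, as you correctly set up); it carries the extra exponential factor in $\nu_2(D)$. If you insist the symbol count is only $D^{\pi(1+o(1))/\log\log D}$, you cannot prove it; if you instead accept the $\lambda^{\nu_2(D)}$ factor in the symbol count, then your claimed per-symbol cost of $D^{\alpha+o(1)}$ must be replaced by the true polynomial bound, or you would lose a second factor of $D^{\alpha}$. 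The remaining pieces of your outline --- the generating function for odd $p$, the Meinardus/Hardy--Ramanujan coefficient asymptotics, and the maximization of $\prod_p e^{\pi\sqrt{\nu_p(D)}}$ over factorizations of $D$ (the paper does this more quickly via Cauchy--Schwarz, $\sum_{p\mid D}\sqrt{\nu_p(D)}\le\sqrt{\omega(D)\Omega(D)}$, maximized at primorials) --- do match the paper and give the $D^{\pi(1+o(1))/\log\log D}$ term correctly.
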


Moreover, by refining the analysis, one obtains some saving in the running-time complexity when running the algorithm for all determinants up to $D$.

\begin{thm} \label{thm:main theorem-upto}
There exists an algorithm that, given positive integers $n$ and $D$ such that $\log D \ll n$, outputs a Gram matrix of an integral lattice in every genus of rank $n$ and determinant at most $D$, with running time complexity
\[
O(n^4 D^{\alpha +1} (\log D)^{e^{\pi} - 1} \log \log D).
\]
\end{thm}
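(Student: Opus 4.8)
The plan is to mirror the proof of Theorem~\ref{thm:main theorem} but to replace the pointwise bound on the number of genus symbols of a fixed determinant $D$ with an averaged bound over all determinants $\le D$. First I would recall the structure of the single-determinant algorithm: one enumerates all valid genus symbols of rank $n$ and determinant $D$ following \cite{ConwaySloane}*{Chapter 15}, and for each symbol one produces a Gram matrix of a lattice in that genus via the Brandhorst--Kirschmer procedure with Hanke's improvements. The running time is the product of (i) the cost of processing one genus symbol, which is polynomial in $n$ and $\log D$ — the $n^4$ factor together with lower-order $\log D$ factors — and (ii) the number of genus symbols, which for a fixed $D$ is $D^{\alpha + \pi(1+o(1))/\log\log D}$. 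Summing over all determinants $m \le D$, the $n^4$ and polynomial-in-$\log D$ factors come out of the sum essentially unchanged (one pays at most an extra $\log D$ or $\log\log D$ factor), so the crux is to bound $\sum_{m \le D} g(m)$, where $g(m)$ denotes the number of genus symbols of rank $n$ and determinant $m$.

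The key step is therefore an average-order estimate: I would show $\sum_{m\le D} g(m) = O(D^{\alpha+1}(\log D)^{e^\pi - 1})$. The bound $g(m) \ll m^{\alpha + \pi(1+o(1))/\log\log m}$ is too lossy to sum directly, since the $o(1)$ is not uniform; instead I would go back to the structure of the genus-symbol count. Writing $m = \prod_p p^{v_p}$, the number of genus symbols factors as a product over primes $p \mid m$ of a local count depending on $v_p$ (and on $p=2$ versus odd $p$), and this local count is bounded by something like $C^{\tau(p^{v_p})}$ or a polynomial in $v_p$ of controlled degree — exactly the estimate that produced the $p^{\alpha}$-type growth in the fixed-$D$ bound, where $\alpha = \log_2(3+\sqrt{17})-1$ arises as the exponential growth rate of the partition-type recursion governing the number of Jordan-block configurations at $p=2$. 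Thus $g$ is multiplicative (or sub-multiplicative), with $g(p^v) \le c\cdot p^{\alpha v}\cdot(\text{poly in }v)$ and, more precisely, the average of $g(p^v)$-type local factors over $p$ contributing the $(\log D)^{e^\pi-1}$ via a Mertens-type product: $\prod_{p\le D}\bigl(1 + O(1/p)\bigr)$-shaped terms accumulate to a power of $\log D$ whose exponent is the relevant constant $e^\pi - 1$.

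Concretely, the estimate $\sum_{m \le D} g(m) \ll D^{\alpha+1}(\log D)^{e^\pi-1}$ should follow from a standard application of the Wirsing / Selberg--Delange method (or a direct Rankin-type / hyperbola argument) to the Dirichlet series $\sum_m g(m)m^{-s}$, which has a pole/branch point at $s = \alpha+1$ with the local behaviour $\sum_m g(m)m^{-s} \sim (s-\alpha-1)^{-(e^\pi - 1)}\cdot(\text{holomorphic})$ near $s=\alpha+1$; the ``$D^{\alpha+1}$'' is the location of the rightmost singularity (one more than the growth rate $\alpha$ because we are summing, i.e. integrating once), and the logarithmic power is dictated by the order of the singularity, which in turn is $\sum_p (\text{mean of the local multiplicity at }p) = e^\pi - 1$. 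I would verify the singularity order by comparing $\sum_m g(m)m^{-s}$ termwise against $\zeta(s-\alpha)^{e^\pi-1}$, showing the ratio extends holomorphically past $\mathrm{Re}(s) = \alpha + 1$, and then invoke a Tauberian theorem (Delange's, or a contour shift) to pass to the summatory function, picking up the $\log\log D$ only from crude handling of the secondary terms and from the per-symbol processing cost.

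The main obstacle I expect is pinning down the exact exponent $e^\pi - 1$ of $\log D$: this requires an honest computation of the ``average local contribution'' $\sum_p \frac{a_p}{p}$-type sum, where $a_p$ is the coefficient of $p^{-(s-\alpha)}$ in the local Euler factor of the genus-symbol Dirichlet series after normalising out the $p^\alpha$ growth, and showing this normalised series behaves like $\zeta(s-\alpha)^{e^\pi-1}$ rather than a nearby power. In particular one must track carefully how the anomalous prime $p=2$ (with its more intricate Jordan-form bookkeeping, oddity, and sign invariants) contributes only a bounded factor and does not perturb the logarithmic exponent, and one must confirm that the same partition-counting asymptotics that give $\pi/\log\log D$ in the fixed-$D$ regime exponentiate to $e^\pi$ (hence $e^\pi-1$ after the $\zeta$-normalisation) in the averaged regime — this is the point where the constant $\pi$ in Theorem~\ref{thm:main theorem} and the constant $e^\pi-1$ here are two faces of the same partition-theoretic input, and making that correspondence precise is the technical heart of the argument.
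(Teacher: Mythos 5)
Your overall strategy is the paper's: reduce to an average-order estimate for the weighted genus count, analyse the associated Dirichlet series by a Selberg--Delange/Delange argument, and peel off the $d^{\alpha}$ growth (coming from the $2$-adic symbol count $\lambda^{\nu_2(d)}$ with $\lambda=\tfrac{3+\sqrt{17}}{2}$, so $d^{\log_2\lambda}=d^{\alpha}$) by Abel summation at the end. However, two of your analytic identifications are off in a way that would not reproduce the stated exponents. First, the order of the singularity of the normalised Dirichlet series is $e^{\pi}$, not $e^{\pi}-1$: with $f(d)=\prod_{p\mid d}e^{\pi\sqrt{\nu_p(d)}}/\nu_p(d)$ controlling the odd-prime local counts, the Euler factor is $1+e^{\pi}p^{-s}+\cdots$ because the local count at $\nu_p=1$ is $\asymp e^{\pi\sqrt{1}}=e^{\pi}$; hence $\sum_d f(d)d^{-s}\sim\zeta(s)^{e^{\pi}}G(s)$, and the exponent $e^{\pi}-1$ of $\log D$ in the summatory function is produced by the Tauberian step ($\sum_{d\le x}a_d\sim\tfrac{C}{\Gamma(\beta)}\,x(\log x)^{\beta-1}$ for a singularity of order $\beta$), not by the singularity order itself. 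Taken literally, your two claims (singularity order $e^{\pi}-1$, and logarithmic power equal to the singularity order) happen to cancel, but correcting either one in isolation gives $(\log D)^{e^{\pi}-2}$. This is precisely the ``correspondence between $\pi$ and $e^{\pi}$'' you flag as the technical heart, and it resolves as above: $e^{\pi}$ enters as the value of the local count at valuation $1$, not as an exponentiation of the fixed-$D$ asymptotic.

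Second, the factor $\log\log D$ is not slack from crude handling of secondary terms: the per-genus cost at determinant $d$ is $O(n^3\omega(d))$, and bounding $\omega(d)$ pointwise by $O(\log D)$ --- as your ``one pays at most an extra $\log D$'' allows --- would overshoot the claimed bound by a factor of $\log D/\log\log D$. The paper keeps the weight $\omega(d)$ inside the sum, introduces the two-variable series $L(z,s)=\sum_d f(d)z^{\omega(d)}d^{-s}=\zeta(s)^{ze^{\pi}}G(z,s)$, differentiates in $z$ at $z=1$ to obtain a singularity of type $\zeta(s)^{e^{\pi}}\log\zeta(s)$, and invokes Delange's Th\'eor\`eme~IV to get $\sum_{d\le D}f(d)\omega(d)\sim cD(\log D)^{e^{\pi}-1}\log\log D$. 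You need this average-order treatment of the $\omega(d)$ weight (or an equivalent device) to land exactly on the stated complexity.
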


Applying Eichler's theorem of strong approximation as in \cite{ConwaySloane}*{Corollary 22}, it has the following immediate corollary.

\begin{cor} \label{cor:main theorem-indefinite}
There exists an algorithm that, given positive integers $n$ and $D$ such that $\log D \ll n$, outputs a Gram matrix of an integral lattice in every indefinite isometry class of rank $n$ and determinant at most $D$, with running time complexity
\[
O(n^4 D^{\alpha +1} (\log D)^{e^{\pi} - 1} \log \log D).
\]
\end{cor}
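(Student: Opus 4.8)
The plan is to run the algorithm of Theorem~\ref{thm:main theorem-upto} and post-process its output. That algorithm returns, for every genus $g$ of rank $n$ and determinant at most $D$, a Gram matrix of some lattice $L_g\in g$ together with the genus symbol of $g$. First I would discard every definite $g$: definiteness is read off the real part of the genus symbol in time $(\log D)^{O(1)}$, and since $\log D\ll n$ the only way to have $n\le 2$ is $D=O(1)$ (handled directly), so the surviving genera are indefinite of rank $\ge 3$. Now apply Eichler's strong approximation for the spin group in the form of \cite{ConwaySloane}*{Corollary~22}: for an indefinite genus of rank $\ge 3$ the isometry classes it contains are in bijection with its spinor genera. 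Hence it remains, for each surviving genus, to output one lattice in each of its spinor genera.

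To do that I would follow \cite{ConwaySloane}*{Chapter~15}: the spinor genera of $g$ form a torsor under a finite elementary abelian $2$-group $\Theta_g$ of order $2^{k(g)}$, and both $\Theta_g$ and its action are determined by the local spinor-norm groups $\theta(\mathrm{O}^{+}(L_{g,p}))$ for the primes $p\mid 2\det g$, which are computable from the genus symbol in time $(\log D)^{O(1)}$. Starting from $L_g$, a representative of a prescribed spinor-genus class is obtained by a bounded chain of $p$-neighbour steps at an auxiliary prime $p$ for which the spinor norm of the resulting transformation realises the chosen element of $\Theta_g$; strong approximation guarantees such a $p$ of size $(\log D)^{O(1)}$, and each neighbour step costs $n^{O(1)}(\log D)^{O(1)}$. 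Thus genus $g$ contributes $2^{k(g)}$ Gram matrices at total overhead $2^{k(g)}\,n^{O(1)}(\log D)^{O(1)}$, and collecting these over all surviving genera produces the desired algorithm.

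The main obstacle is to verify that this does not inflate the running time past $O\!\big(n^4 D^{\alpha+1}(\log D)^{e^{\pi}-1}\log\log D\big)$, i.e.\ that $\sum_{g}2^{k(g)}$, summed over all rank-$n$ genera of determinant at most $D$, is of this order up to the $n^4$ factor. The number of such genera already satisfies this bound by Theorem~\ref{thm:main theorem-upto}, so one needs that the weighted count is not much larger. The naive estimate $k(g)\le\omega(2\det g)$ is too weak, as it would worsen the exponent of $\log D$; the point, which requires genuine work, is that $\theta(\mathrm{O}^{+}(L_{g,p}))=\mathbb{Q}_p^{\times}$ for every odd $p$ with $v_p(\det g)=1$, and more generally whenever the $p$-adic Jordan structure of $g$ is generic, so that $2^{k(g)}$ exceeds $O(1)$ only when $\det g$ is locally non-generic — in particular divisible by the square of an odd prime — and such determinants are sparse enough ($\sum_{m\le x}2^{\#\{p:\,p^2\mid m\}}=O(x)$) that re-running the analysis underlying Theorem~\ref{thm:main theorem-upto} (a Selberg--Delange-type estimate) with the extra multiplicative weight $2^{k(g)}$ leaves the exponent of $D$, and the exponent $e^{\pi}-1$ of $\log D$, unchanged. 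Granting this and a careful treatment of the prime $2$, the estimate follows and the corollary is proved.
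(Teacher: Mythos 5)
Your proposal takes a genuinely different, and much heavier, route than the paper. The paper's proof is a one-line observation: under the standing hypothesis $\log D \ll n$, the determinant is far too small relative to the rank for any genus to split into more than one spinor genus. Concretely, for a genus to contain several spinor genera there must be a prime $p$ at which every Jordan constituent has very small dimension, which forces $p^{n(n-1)/2} \mid 4D$; since $\nu_p(4D) = O(\log D) = O(n) < n(n-1)/2$ for $n$ large, this cannot happen. This is exactly the content of \cite{ConwaySloane}*{Corollary 22}, which combined with Eichler's theorem gives genus $=$ spinor genus $=$ isometry class for every indefinite genus in the stated range. Hence the algorithm of Theorem~\ref{thm:main theorem-upto} already outputs a representative of every indefinite isometry class, and the only post-processing is discarding the definite genera by reading the signature; the running time is unchanged. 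You instead set up the full spinor-genus enumeration (the $2$-group $\Theta_g$, neighbour steps to traverse it, and a weighted Selberg--Delange estimate for $\sum_g 2^{k(g)}$), none of which is needed here.

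Beyond being an unnecessarily long detour, your argument as written has a genuine gap: the entire complexity bound hinges on the claim that the weighted count $\sum_g 2^{k(g)}$ does not inflate the exponents of $D$ and $\log D$, and you explicitly defer this (``Granting this and a careful treatment of the prime $2$\dots''). The sketch you give (sparsity of non-squarefree determinants) is plausible but is not a proof, and the cost and correctness of realising each class of $\Theta_g$ by a bounded chain of $p$-neighbour steps at a prime of size $(\log D)^{O(1)}$ is likewise asserted rather than established. Both issues evaporate once you notice that $k(g)=0$ for every genus under consideration; I would encourage you to check what \cite{ConwaySloane}*{Corollary 22} actually says before building the general machinery.
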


\begin{rem}
    For definite lattices, the running time complexity is dominated by exhausting the isometry classes within a genus, for which we do not have an improvement over the known methods of enumerating $p$-neighbors, which are exponential in the rank $n$.  
    The basic idea was first described in \cite{SchulzePillot}, and by using the isometry testing of \cite{PleskenSouvignier} instead of Minkowski reduction, it can be adapted to work for any rank, see \cite{Hanke}*{Algorithm 5.8}.
\end{rem}

An implementation of the algorithm is publicly available at \url{https://github.com/assaferan/k3s-lmfdb/tree/main/lattices}. 

A key step in the algorithm is computing a representative in a given genus. This step is the bottleneck for the running time of the overall procedure. After providing the necessary background in Section~\ref{sec:background}, in Section~\ref{sec:representative} we describe such an algorithm, originally implemented in \cite{SageMath}. We compare its running time complexity to a previously described algorithm \cite{DubeyHolenstein}.

The algorithm, implemented in \cite{SageMath} by Brandhorst, following \cite{MirMor2009}, uses two main components: the construction of a maximal integral overlattice, and local modifications to an integral lattice to match a specified $p$-adic genus symbol at a single prime $p$. These steps are described, respectively, in Sections~\ref{sec:maximal overlattices} and \ref{sec:local modifications}. 

An algorithm for finding a maximal overlattice was proposed in \cite{Hanke}. In Section~\ref{sec:maximal overlattices}, we analyze its running time and compare it with Brandhorst's. Our final algorithm incorporates these improvements into Brandhorst's implementation of Kirschmer's method.

Section~\ref{sec:complexity analysis} analyzes the running time of our algorithm and completes the proofs of Theorem~\ref{thm:main theorem} and Theorem~\ref{thm:main theorem-upto}.

\section{Background} \label{sec:background} 

In this section, we recall some needed definitions and fix notations. For reference, the interested reader may consult any standard text on quadratic forms, such as \cite{OMeara}, \cite{Cassels}, \cite{Buell}, \cite{ConwaySloane}, or \cite{Cox}.

We work in a fixed dimension $n$. Let $R$ be a PID with field of fractions $F$, and let $V$ be an $F$-vector space of dimension $n$, so that $V \simeq F^n$. 

\begin{defn}
     An \defi{$R$-lattice} in $V$ of \defi{rank} $n$ is a f.g. $R$-submodule $L \subseteq V$ such that $F L = V$. When $R=\Z$, we simply say that $L$ is a \defi{lattice}.
\end{defn}

In this paper, all lattices are equipped with a symmetric bilinear form $B$ with values in an $R$-module $N$.
The form $B$ induces a homomorphism $\phi_B : M \to \Hom_R(M,N)$, and we say that $B$ is \defi{regular} when $\phi_B$ is an isomorphism. When $R = F$ is a field, we also say that $B$ is \defi{non-degenerate}.

Given a symmetric bilinear form $B$ on $V$, any choice of a basis $E$ for $V$ gives rise to an $n \times n$ symmetric matrix $M_{B,E} \in M_n(F)$ with
\[
B(u,v) = u^\intercal M_{B,E} v \qquad \forall \, u, v \in V.
\]
This is the \defi{Gram matrix} of $B$ with respect to $E$. 

We are interested in lattices in quadratic spaces, and there is a slight difference in integrality notions between the quadratic form and the symmetric bilinear form, due to issues at $p = 2$. We therefore proceed to define quadratic structures. Let $Q$ be a quadratic form on $V$ (see \cite{Voight}*{Def. 4.2.1}), and let
\begin{equation} \label{eq:polarization}
    B(u,v) = Q(u+v)-Q(u)-Q(v)
\end{equation}
be the associated bilinear form. A \defi{quadratic space} is a pair $(V,Q)$.

\begin{defn}
    Two quadratic spaces $(V_1, Q_1)$ and $(V_2, Q_2)$ are \defi{isometric} if there exists an $F$-linear isomorphism $S : V_1 \to V_2$ such that
    $ Q_2(S(v)) = Q_1(v)$ for all $v \in V_1$.
    We then write $V_1 \simeq V_2$.
\end{defn}

\begin{defn}
    Lattices $L_1, L_2$ in quadratic spaces $V_1, V_2$ are \defi{isometric} if there exists an isometry $S : V_1 \to V_2$ with $S(L_1)=L_2$. We write $L_1 \simeq L_2$.
\end{defn}

Scaling the form, we may assume $B(L,L)\subseteq R$. This leads to the following.

\begin{defn}
    An \defi{integral $R$-lattice} in $(V,Q)$ is a lattice $L$ with $B(L,L)\subseteq R$. It is \defi{even} if $Q(L)\subseteq R$.
\end{defn}

An $R$-lattice $L$ is integral iff it has a Gram matrix $G\in M_n(R)$. As changing basis by $A\in\GL_n(R)$ replaces $G$ by $A^\intercal G A$, $\det(G)$ can only change by squares.

\begin{defn}
    Let $L$ be an integral $R$-lattice with Gram matrix $G$. Define
    \[
        \det(L)=\det(G) R^{\times 2} \in R / R^{\times 2}
    \]
    to be its \defi{determinant}. 
\end{defn}

Our goal is to classify integral lattices up to isometry for fixed rank $n$ and determinant $D$.
However, the determinant $D$ is a coarse invariant, and one obtains a finer one by working locally. For $\Z$-lattices, we can define the following invariant, measuring the local-to-global obstruction.

\begin{defn}
    Let $L_1,L_2$ be quadratic lattices. If $L_1\otimes\Z_p \simeq L_2\otimes\Z_p$ for all primes $p$ and
    $L_1\otimes\R \simeq L_2\otimes\R$,
    we say they lie in the same \defi{genus} and write $L_1\sim L_2$.
\end{defn}

For describing the genus, one can use a finite list of symbols, encoding the local isometry class at each place. At $\infty$, this is the signature. We recall below the definitions of the signature and, more generally, the local genus symbols.

\begin{defn}
    When $F=\R$, the \defi{signature} of $(V,Q)$ is the triple $(n_0,n_+,n_-)$ counting the zero, positive, and negative eigenvalues of a Gram matrix. If $n_0=0$, we write $(n_+,n_-)$.
\end{defn}

A \defi{genus symbol} is a finite collection of local invariants, one for each place $p$ (including $\infty$), that uniquely determines the genus. We use the explicit system of symbols defined in \cite{ConwaySloane}. In particular, a genus symbol is simply the tuple of all its local symbols, and two symbols are equal exactly when the associated genera are equal. Our algorithms always treat genera as given in this symbolic form, so no preprocessing such as reordering constituents is required.

In our complexity analysis, some arithmetic functions occur naturally. We denote by $\omega(D)$ the number of distinct prime factors of $D$, by $\nu_p(D)$ the valuation of $D$ at $p$, and by $\Omega(D) = \sum_{p \mid D} \nu_p(D)$ the total number of prime factors of $D$. We also denote by $\psi \approx 2.373$ the exponent for the complexity of matrix multiplication, so that multiplication of $n \times n$ matrices costs $n^{\psi}$ multiplications in the base ring.

\section{Representative of a genus} \label{sec:representative}
In this section we describe two algorithms that, given a genus symbol, return a representative lattice in that genus.
We begin by describing an algorithm, implemented in \cite{SageMath} by Brandhorst, which follows the ideas in \cite{Kirschmer}, proving its correctness, and analyzing its running time complexity. We then analyze the previous \cite{DubeyHolenstein}, and compare the two in Lemma~\ref{lem:DH vs Brandhorst}.

\subsection{Brandhorst algorithm}
\SetKwFunction{RationalRepresentative}{Rational\allowbreak-Representative}
\SetKwFunction{GramZpRepresentative}{Gram\allowbreak-$\Z_p$\allowbreak-Representative}
\SetKwFunction{GramZtwoRepresentative}{Gram\allowbreak-$\Z_2$\allowbreak-Representative}
\SetKwFunction{MaximalOverlattice}{Maximal\allowbreak-Overlattice}
\SetKwFunction{LocalModification}{Local\allowbreak-Modification}
Let \RationalRepresentative be a function that, 
given a genus symbol $\gamma$, returns a lattice in $L \otimes \Q_p$ for some $L \in \gamma$. For a $\Z_p$-genus symbol $\gamma_p$, we define \GramZpRepresentative{$\gamma_p$} 
to be the Gram matrix of a representative of $\gamma_p$.
For a lattice $L$, the function \MaximalOverlattice{$L$} returns a maximal overlattice of $L$, by using either Algorithm~\ref{alg:global_maximal} or \cite{Hanke}*{Algorithm 4.6}.
For a lattice $L$, a prime $p$, and a Gram matrix $G_p$ over $\Z_p$, the function \LocalModification($L$, $G_p$, $p$) returns a lattice $L'$ such that for all $q \ne p$, $L_q' = L_q$, and $L_p$ has Gram matrix $G_p$. This is done by applying Algorithm~\ref{alg:local modification}.
Using these building blocks, we present Algorithm~\ref{alg:sage_representative} to compute a representative lattice of a given genus.

\SetKwFunction{FRepresentative}{Representative}

\begin{algorithm}[H]
    \caption{\protect\FRepresentative{$\gamma$}; Constructs a representative of $\gamma$.}
    \label{alg:sage_representative}
    \KwData{Genus symbol $\gamma$}
    $L \gets$ \RationalRepresentative{$\gamma$}\;
    $L\gets$ \MaximalOverlattice{$2L$}\;
    $G_2\gets$ \GramZtwoRepresentative{$\gamma_2$}\;
    \eIf{$\gamma$ is not even}
    {
        $L\gets$ \LocalModification{$L$, $4 G_2$, $2$}\;
        $L\gets L/2$\;
    }
    {
        $L\gets$ \LocalModification{$L$, $G_2$, $2$}\;
    }
    \For{$2 \ne p \mid \det(\gamma)$}
    {
        $G_p\gets$ \GramZpRepresentative{$\gamma_p$}\;
        $L\gets$ \LocalModification{$L$, $G_p$, $p$}\;
    }
    \Return{$L$}\;
\end{algorithm}

\begin{thm} \label{thm:correctness of representative}
    Algorithm~\ref{alg:sage_representative} returns a lattice $L$ with $L\in\gamma$.
\end{thm}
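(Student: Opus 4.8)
The plan is to verify that each step of Algorithm~\ref{alg:sage_representative} maintains a lattice whose local structure is progressively forced to agree with the target genus symbol $\gamma$, place by place, while never disturbing the places already fixed. The key observation is that a genus is determined by its local data: the signature at $\infty$ and the $\Z_p$-isometry class at every prime $p$. So I would track, as a loop invariant, which local conditions are currently satisfied, and argue they are preserved (and extended) by each operation.

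\textbf{Proof.}
We follow the steps of Algorithm~\ref{alg:sage_representative}. After the call $L \gets \RationalRepresentative(\gamma)$, the lattice $L$ spans a quadratic space isometric to $V$ for some $L_0 \in \gamma$; in particular $L \otimes \Q \simeq L_0 \otimes \Q$, so $L$ has the correct signature at $\infty$ and the correct Hasse invariants, hence the correct $\Q_p$-isometry class for all $p$. Since for odd $p$ the $\Z_p$-lattice structure is essentially determined by the genus symbol $\gamma_p$ up to the forthcoming local modifications, and since the only prime where the passage between bilinear and quadratic integrality matters is $p = 2$, we first take a maximal overlattice of $2L$. Scaling by $2$ ensures evenness is not an obstruction to forming $\MaximalOverlattice$, and by the defining property of \MaximalOverlattice{} the resulting $L$ is a maximal integral lattice in its quadratic space; for every odd prime $p$ this pins down $L \otimes \Z_p$ as \emph{the} maximal $\Z_p$-lattice in a fixed quadratic space, which for $p \nmid D$ is already in the genus $\gamma_p$.

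Next I would handle the prime $2$. We compute $G_2 = \GramZtwoRepresentative(\gamma_2)$, a Gram matrix realizing the prescribed $\Z_2$-symbol. If $\gamma$ is not even, we must work with the rescaled form: we call $\LocalModification(L, 4G_2, 2)$, which by the defining property of \LocalModification{} replaces $L$ by a lattice with $L_q$ unchanged for all $q \neq 2$ and $L_2$ having Gram matrix $4G_2$; dividing by $2$ then yields a lattice whose $2$-adic completion has Gram matrix $G_2$ (so its $\Z_2$-symbol is $\gamma_2$) while its completion at every odd prime is scaled by $1/4$, a $2$-adic unit square, hence unchanged up to isometry. If $\gamma$ is even we simply call $\LocalModification(L, G_2, 2)$ directly. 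In either branch, after this step $L \otimes \Z_2 \simeq L_0 \otimes \Z_2$ and $L \otimes \Z_p$ is unchanged at all odd $p$ from the previous stage.

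Finally, for each odd prime $p \mid D$ in turn, we compute $G_p = \GramZpRepresentative(\gamma_p)$ and call $\LocalModification(L, G_p, p)$. By the defining property of \LocalModification{}, this forces $L \otimes \Z_p$ to have Gram matrix $G_p$ — hence $\Z_p$-symbol $\gamma_p$ — while leaving $L \otimes \Z_q$ fixed for all $q \neq p$; in particular the already-corrected completions at $2$ and at the previously treated odd primes are untouched, and the completion at each $p \nmid D$ remains the maximal lattice fixed in the overlattice step, which lies in $\gamma_p$. After the loop, $L \otimes \Z_p \simeq L_0 \otimes \Z_p$ for every prime $p$, and $L \otimes \R \simeq L_0 \otimes \R$ since the rational span was never changed by any \LocalModification{} or overlattice operation. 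Therefore $L \sim L_0$, i.e.\ $L \in \gamma$. \qed

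\textbf{The main obstacle} is the bookkeeping at $p = 2$, specifically justifying the rescaling trick in the non-even branch: one must confirm that replacing the form by $4G_2$ and then dividing the lattice by $2$ does indeed produce the symbol $\gamma_2$ rather than a shifted symbol, and that this operation is genuinely invisible at odd primes (using that $4 = 2^2$ and $2 \in \Z_p^\times$ for odd $p$). A secondary point requiring care is that $\RationalRepresentative$ and $\GramZpRepresentative$ return data consistent with $\gamma$ in the sense needed — i.e.\ that the local symbols assembled from \GramZtwoRepresentative{} and \GramZpRepresentative{} are mutually compatible and compatible with the chosen rational space — but this is part of the specification of those subroutines and of the validity of $\gamma$ as a genus symbol, so it can be cited rather than reproved.
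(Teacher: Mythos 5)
Your proof is correct and takes essentially the same route as the paper's: verify the local symbol prime by prime, using the specifications of \texttt{Maximal-Overlattice} and \texttt{Local-Modification}, with the rescaling bookkeeping at $p=2$ in the non-even branch (your treatment of why $L/2$ is invisible at odd primes, and of the primes $p\nmid D$ and the place at infinity, is in fact slightly more explicit than the paper's). The one small slip is calling $1/4$ a ``$2$-adic unit square'' when you mean a unit square in $\Z_p$ for odd $p$, which you yourself correct in your closing remarks.
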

\begin{proof}
    We show that, at the end of the algorithm, $L$ has the correct genus symbol at each prime $p$. First consider the prime $p=2$. 

    If $\gamma$ is not even, then we locally modify $L$ such that $L_2$ has Gram matrix $4G_2$. Observe that $L$ is maximal, $4G_2$ is even, and $L$ is isometric to the lattice with Gram matrix $4G_2$. Thus, by Theorem~\ref{thm:local_mod_correct}, the output of \LocalModification, call it $L'$, satisfies that $L_2'$ has a basis for which the Gram matrix is $4G_2$. Hence, $L_2'/2$ has Gram matrix $G_2$ with respect to the scaled basis.

    Consider an odd prime $p$ such that $p\mid D$. After the local modification at $p$, it is clear that $L_p$ has Gram matrix $G_p$, so $L$ indeed has the correct genus symbol at $p$.

    Finally, for odd primes $p$ that do not divide $D$, the initial choice of $L$ already had the correct genus symbols at such $p$, and by Theorem~\ref{thm:local_mod_correct} we have not modified these genus symbols. Thus, at the end of the algorithm, $L$ has the correct genus symbol at every prime $p$.
\end{proof}

Our initial complexity analysis depends on the costs of \MaximalOverlattice and \LocalModification, which vary according to the algorithms used. 

\begin{thm}
    Suppose the time complexity of doing a local modification at a prime $p$ of a lattice $L$ with rank $n$ and determinant $D$ is $\locmod(n, D, p)$. Likewise, suppose the time complexity of finding the maximal overlattice of a lattice $L$ with rank $n$ and determinant $D$ is $\pmaximal(n, D)$. Then Algorithm~\ref{alg:sage_representative} has time complexity
    \[
        O\left(\pmaximal(n, D)+\sum_{p\mid D} \locmod(n, D, p) + n^2 + n \omega(D) \right).
    \]
\end{thm}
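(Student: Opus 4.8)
The plan is to trace through Algorithm~\ref{alg:sage_representative} line by line, bounding the cost of each step in terms of the abstract cost functions $\pmaximal$ and $\locmod$ together with elementary bookkeeping, and then summing. First I would account for the call to \RationalRepresentative{$\gamma$}: producing a rational Gram matrix from the genus symbol requires only diagonalizing over $\Q$ using the local data at the finitely many bad primes and the signature at $\infty$, which costs $O(n^2 + n\omega(D))$ — one $O(n^2)$ term for writing down and manipulating an $n \times n$ diagonal (or block-diagonal) Gram matrix, and an $O(n\omega(D))$ term because assembling the rational form from the local constituents touches $O(n)$ entries for each of the $\omega(D)$ primes dividing $D$. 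Scaling $L$ to $2L$ on line~2 is a single $O(n^2)$ scalar multiplication of the Gram matrix and is absorbed.

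Next I would handle the call \MaximalOverlattice{$2L$} on line~2, which by hypothesis costs $\pmaximal(n, D')$ where $D'$ is the determinant of $2L$; since $\det(2L) = 2^n D$ up to squares, one should either argue that $\pmaximal$ is monotone and $\log(2^n D) \ll n$ keeps us in the same regime, or simply note that the cost function $\pmaximal(n,D)$ as stated already refers to rank $n$ and the relevant determinant, so this contributes $\pmaximal(n, D)$ to the bound (possibly after remarking that the extra power of $2$ changes nothing asymptotically). The call \GramZtwoRepresentative{$\gamma_2$} on line~3 produces the Gram matrix of a $\Z_2$-representative of the $2$-adic constituent; this is purely local at $2$ and costs $O(n^2)$ (writing down an $n\times n$ matrix built from a bounded number of Jordan blocks whose sizes sum to $n$), which is absorbed into the $n^2$ term.

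Then I would bound the conditional block (lines 4--9): regardless of which branch is taken, it performs exactly one call to \LocalModification at the prime $2$, at cost $\locmod(n, D, 2)$, possibly preceded or followed by an $O(n^2)$ scaling (forming $4G_2$, or dividing $L$ by $2$). Finally the \textbf{for} loop over odd primes $p \mid \det(\gamma) = D$ runs $\omega(D) - 1$ (or $\omega(D)$) times; each iteration calls \GramZpRepresentative{$\gamma_p$} at cost $O(n^2)$ and then \LocalModification{$L$, $G_p$, $p$} at cost $\locmod(n, D, p)$. Summing the local-modification costs over all $p \mid D$ (the prime $2$ plus the odd ones) gives $\sum_{p \mid D} \locmod(n, D, p)$; the $\omega(D)$-many $O(n^2)$ Gram-matrix constructions contribute $O(n^2 \omega(D))$, but this is dominated because each such construction is itself part of an iteration that also performs a local modification, so we may fold it into $\locmod$ or simply keep the stated $O(n^2 + n\omega(D))$ term from the rational representative step and observe the loop's per-step $O(n^2)$ work is no larger than the local modification it accompanies. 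Adding the contributions from \RationalRepresentative, \MaximalOverlattice, the single modification at $2$, and the loop yields the claimed
\[
    O\!\left(\pmaximal(n, D) + \sum_{p \mid D} \locmod(n, D, p) + n^2 + n\omega(D)\right).
\]
The main obstacle I anticipate is the bookkeeping around the determinant changing from $D$ to $2^n D$ (and back, after dividing by $2$) through the maximal-overlattice and local-modification calls: one must be careful that feeding a lattice of determinant $2^n D$ into $\pmaximal$ and $\locmod$ is legitimately bounded by $\pmaximal(n,D)$ and $\locmod(n,D,p)$, which requires either a monotonicity remark on those cost functions or the observation that under the standing hypothesis $\log D \ll n$ the factor $2^n$ is harmless. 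The rest is routine accounting of $O(n^2)$ matrix operations against the $\omega(D)$ primes.
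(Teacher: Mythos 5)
Your proposal follows the same route as the paper: a line-by-line accounting of Algorithm~\ref{alg:sage_representative}, charging the maximal-overlattice call to $\pmaximal(n,D)$, each local modification to $\locmod(n,D,p)$, and the rational representative to $O(n^2+n\omega(D))$. One place where your bookkeeping diverges, and where the argument as written is not airtight: you assign cost $O(n^2)$ to each call of \GramZpRepresentative{$\gamma_p$} (writing down a dense $n\times n$ matrix), which produces an $O(n^2\omega(D))$ total that does \emph{not} fit inside the claimed $O(n^2+n\omega(D))$. Your two proposed escapes --- folding it into $\locmod(n,D,p)$, or asserting it is ``no larger than the local modification it accompanies'' --- are not valid at the level of generality of the statement, since $\locmod$ is an abstract cost function about which no lower bound (such as $\locmod(n,D,p)\gg n^2$) is hypothesized. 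The paper's resolution is simpler: each local Gram matrix is block diagonal with blocks of bounded size, so it has only $O(n)$ nonzero entries and \GramZpRepresentative{$\gamma_p$} costs $O(n)$, giving $O(n\omega(D))$ over all primes, which is exactly the stated term. With that correction your argument closes. On the other side of the ledger, your caution about the determinant inflating to $2^nD$ through the scaling and maximal-overlattice steps is a legitimate point that the paper's proof does not address at all; your suggested fix (a monotonicity remark on $\pmaximal$ and $\locmod$, or an appeal to the convention that these denote the cost on the actual inputs encountered) is the right way to handle it.
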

\begin{proof}
    \RationalRepresentative{$\gamma$} takes $O(n\omega(D) + n^2)$ time, since determining the diagonal blocks of the rational Gram matrix is $O(n\omega(D))$ and assembling the full Gram matrix is $O(n^2)$.
    Similarly, each \GramZpRepresentative{$\gamma_p$} 
    can be computed in $O(n)$ time, so the total time to compute the rational representative and the local Gram matrices is
    $O(n\omega(D) + n^2)$, hence the stated complexity.
\end{proof}

As a corollary, we obtain the running time complexity when Algorithm~\ref{alg:global_maximal} is used for \MaximalOverlattice, as follows.

\begin{cor}
    Let $\delta\in(0, 1)$. For a prime $p$, let
    \begin{equation}
        c_p = \begin{cases}
            \lambda_p \log p \log \left( \frac{\omega(D)}{\delta} \right) & p \ne 2, \\
            2^{\lambda_2} & p = 2,
        \end{cases}
    \end{equation}
    where
    \[
        \lambda_p=\begin{cases}
           \min(\nu_2(D), 4n) & \text{if }p=2,\\[4pt]
           \min(\nu_p(D), n) & \text{otherwise.}
       \end{cases}
    \]
    Assume arithmetic operations take $O(1)$. Then
    when using Algorithm~\ref{alg:global_maximal} for \MaximalOverlattice,
    Algorithm~\ref{alg:sage_representative} has time complexity
    \[
        O\left(n \sum_{p \mid D} \log \nu_p(D) +
        \lambda_p n^{\psi} + c_p \right) 
    \]
    with probability of failure at most $\delta$.
\end{cor}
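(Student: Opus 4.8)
The plan is to substitute the specific complexities of Algorithm~\ref{alg:global_maximal} and Algorithm~\ref{alg:local modification} into the bound $O\left(\pmaximal(n, D)+\sum_{p\mid D} \locmod(n, D, p) + n^2 + n \omega(D) \right)$ from the previous theorem, and then simplify. First I would recall (or cite from the relevant earlier sections) the running time of \MaximalOverlattice when Algorithm~\ref{alg:global_maximal} is used: this is a randomized algorithm, and with the stated failure probability budget $\delta$ split over the $\omega(D)$ bad primes (hence the $\log(\omega(D)/\delta)$ factors appearing in $c_p$ for odd $p$), its cost should be of the form $O\left(\sum_{p\mid D}\lambda_p n^{\psi} + c_p\right)$, where the $\lambda_p n^{\psi}$ term accounts for the $\lambda_p$ matrix operations needed to saturate the lattice at $p$ (each a matrix multiplication of cost $n^{\psi}$), and the $c_p$ term absorbs the remaining lower-order arithmetic, with the exceptional form at $p=2$ reflecting the $2$-adic obstructions. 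A union bound over the primes dividing $D$ then gives total failure probability at most $\delta$.

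Next I would handle the \LocalModification contribution. The claim is that $\sum_{p\mid D}\locmod(n,D,p)$ is absorbed into the same expression; concretely, I would argue that a single local modification at $p$, as performed by Algorithm~\ref{alg:local modification}, costs $O\left(n\log\nu_p(D) + \lambda_p n^{\psi} + c_p\right)$ — the $n\log\nu_p(D)$ term coming from bookkeeping over the Jordan blocks (whose number is bounded by $\nu_p(D)$, and the $\log$ coming from, e.g., a binary search or the bit-length of the exponents), and the $\lambda_p n^{\psi}$ and $c_p$ terms again from the matrix arithmetic and the $p$-adic precision required to realize the target Gram matrix to sufficient accuracy. Combining the two contributions and noting that the additive $n^2 + n\omega(D)$ from \RationalRepresentative is dominated by $\sum_{p\mid D}\lambda_p n^{\psi}$ (since $\lambda_p\ge 1$ and $\psi > 2$, so even one prime contributes $n^{\psi}\ge n^2$, and $\omega(D)$ primes contribute at least $n^2\omega(D)\ge n\omega(D)$), everything collapses to the stated $O\left(n\sum_{p\mid D}\log\nu_p(D) + \lambda_p n^{\psi} + c_p\right)$, where the sum is understood to range over $p\mid D$.

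The main obstacle I anticipate is pinning down the precision analysis that produces the $c_p$ terms, and in particular justifying the exact shape $\lambda_p\log p\log(\omega(D)/\delta)$ for odd $p$ versus $2^{\lambda_2}$ for $p=2$. For the odd primes this requires showing that the probabilistic step (presumably a random choice that succeeds with constant probability, needing $O(\log(\omega(D)/\delta))$ independent trials to drive the per-prime failure probability below $\delta/\omega(D)$) combined with the $p$-adic working precision (contributing the $\lambda_p\log p$ factor, since one needs roughly $\lambda_p$ digits in base $p$, each of bit-size $\log p$) yields exactly this bound; for $p=2$ the dyadic case genuinely needs an exponential-in-$\lambda_2$ enumeration of the relevant $2$-adic data, hence the $2^{\lambda_2}$. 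I would therefore spend most of the write-up carefully tracking, prime by prime, the number of arithmetic operations and the size of the $p$-adic elements involved, and then invoke the assumption that arithmetic operations take $O(1)$ to convert the operation count directly into the time bound. The final union-bound step for the failure probability is routine once the per-prime probabilities are in hand.
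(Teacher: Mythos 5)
Your proposal is correct and follows essentially the same route as the paper: the paper's proof simply substitutes the bound of Theorem~\ref{thm:global_maximal_time} for $\pmaximal(n,D)$ and of Corollary~\ref{cor:sage_locmod_time} for $\locmod(n,D,p)$ into the generic bound $O(\pmaximal(n,D)+\sum_{p\mid D}\locmod(n,D,p)+n^2+n\omega(D))$, sets $M(x)=1$, and takes a union bound assigning failure probability $\delta/\omega(D)$ to each odd prime. Your only slight inaccuracies are in attributing where individual terms originate (the $\lambda_p n^{\psi}$ comes from the $\lambda_p$ iterations of the overlattice-enlargement loop rather than from saturation, and the $n\log\nu_p(D)$ term comes from the saturation step), but these do not affect the validity of the argument.
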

\begin{proof}
    The bound follows from Theorem~\ref{thm:global_maximal_time} and Corollary~\ref{cor:sage_locmod_time}, which give the stated costs for computing the maximal overlattice and for the local modifications respectively; combining those bounds with the preceding paragraph yields the claimed overall complexity and failure probability.
\end{proof}

\begin{cor} \label{cor:rep with hanke}
    When using \cite{Hanke}*{Algorithm 4.6} for \MaximalOverlattice, Algorithm~\ref{alg:sage_representative} has time complexity
    \[
    O\left(n^3 \omega(D) + n^2 \Omega(D) + n^\psi \sum_{p\mid D}\log \nu_p(D)  + n\omega(D)\Omega(D) \log D\right).
    \]
    In terms of $n$ and $\log D$, this gives the following complexity.
    \[O\left(n^3 \frac{\log D}{\log \log D}+n^\psi \log D+n\frac{(\log D)^3}{\log \log D}\right).\]
\end{cor}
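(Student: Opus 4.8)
The plan is to combine the general complexity bound from the preceding theorem with the known cost of Hanke's maximal-overlattice algorithm and with the cost of a local modification. First I would recall that the general bound reads
\[
O\!\left(\pmaximal(n,D) + \sum_{p\mid D}\locmod(n,D,p) + n^2 + n\omega(D)\right),
\]
so the task reduces to substituting $\pmaximal(n,D)$ by the running time of \cite{Hanke}*{Algorithm 4.6} and $\locmod(n,D,p)$ by the cost of Algorithm~\ref{alg:local modification} at the prime $p$. For the latter I would invoke the local-modification complexity established earlier (the analogue of Corollary~\ref{cor:sage_locmod_time}), which contributes a term of the shape $n^{\psi}\log\nu_p(D)$ together with lower-order linear-algebra terms in $n$; summing over $p\mid D$ produces the $n^{\psi}\sum_{p\mid D}\log\nu_p(D)$ summand and, from the per-prime $O(n^2)$ and $O(n\log D)$ pieces, the $n^2\Omega(D)$ and $n\omega(D)\Omega(D)\log D$ contributions. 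For $\pmaximal(n,D)$ I would cite the stated running time of Hanke's algorithm, which accounts for the $n^3\omega(D)$ term (one $n^3$-type computation per prime dividing $D$, e.g.\ a $p$-adic reduction / kernel computation). Absorbing the leftover $n^2 + n\omega(D)$ from the general bound into the larger terms then yields the first displayed estimate.

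Next I would deduce the second, cleaner bound purely in terms of $n$ and $\log D$ by applying standard elementary estimates on the arithmetic functions. The key inputs are: $\omega(D) = O\!\big(\tfrac{\log D}{\log\log D}\big)$ (the maximal order of $\omega$, from the primorial bound); $\Omega(D) = O(\log D)$ (since each prime factor is at least $2$, so $\Omega(D)\le \log_2 D$); and $\sum_{p\mid D}\log\nu_p(D) = O(\log\log D \cdot \omega(D)) = O(\log D)$, or even more crudely $\sum_{p\mid D}\log\nu_p(D)\le \omega(D)\log\Omega(D) = O(\log D)$. Substituting these into the four terms of the first bound gives respectively $n^3\cdot\tfrac{\log D}{\log\log D}$, $n^2\log D$, $n^{\psi}\log D$, and $n\cdot\tfrac{\log D}{\log\log D}\cdot\log D\cdot\log D = n\cdot\tfrac{(\log D)^3}{\log\log D}$. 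Since $\psi \approx 2.373 > 2$, the term $n^2\log D$ is dominated by $n^{\psi}\log D$ and drops out, leaving exactly the claimed
\[
O\!\left(n^3\frac{\log D}{\log\log D} + n^{\psi}\log D + n\frac{(\log D)^3}{\log\log D}\right).
\]

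The main obstacle I anticipate is bookkeeping rather than conceptual: making sure the per-prime costs are summed with the correct arithmetic-function weights (distinguishing contributions that scale with $\omega(D)$, with $\Omega(D)$, or with $\sum_{p\mid D}\log\nu_p(D)$), and verifying that Hanke's algorithm indeed contributes the $n^3\omega(D)$ term as opposed to something with a worse dependence on the valuations. A secondary subtlety is confirming that the hypothesis $\log D \ll n$ is not actually needed for this particular corollary — the two displayed bounds are equivalent up to the elementary estimates above regardless — so I would state the $n$-and-$\log D$ form as an unconditional simplification of the first. Finally I would double-check that lower-order terms such as $n^2$, $n\omega(D)$, and the $n\sum_{p\mid D}\log\nu_p(D)$ piece (which is $O(n\log D)$) are genuinely absorbed: each is bounded by one of the three retained terms since $n^2 \le n^{\psi}$ and $n\log D \le n\,(\log D)^3/\log\log D$ for $D$ large.
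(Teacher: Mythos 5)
Your proposal is correct and follows essentially the same route as the paper: combine the generic bound $O(\pmaximal + \sum_p \locmod + n^2 + n\omega(D))$ with Theorem~\ref{thm:hanke_complexity} for the maximal overlattice and Corollary~\ref{cor:hanke_locmod_time} for the local modifications, then apply the standard estimates $\omega(D)=O(\log D/\log\log D)$, $\Omega(D)=O(\log D)$, and $\sum_{p\mid D}\log\nu_p(D)=O(\log D)$. The only slight difference is bookkeeping: in the paper the $n^2\Omega(D)$ and $n\omega(D)\Omega(D)\log D$ terms come from the maximal-overlattice cost (Theorem~\ref{thm:hanke_complexity}) rather than from the local modifications, but since both contributions are summed the final bound is identical.
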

\begin{proof}
    This follows from Theorem~\ref{thm:hanke_complexity} and Corollary~\ref{cor:hanke_locmod_time}. The latter time complexity is obtained by noting that $\omega(D) = O \left( \frac{\log D}{\log \log D} \right)$, $\Omega(D) = O (\log D)$, and $\sum_{p \mid 
    D} \log \nu_p(D) = O(\log D)$.
\end{proof}

\subsection{Dubey--Holenstein Algorithm} In \cite{DubeyHolenstein}, 
an algorithm to find a representative in polynomial time has been described, but no exact upper bound was determined. In this section, we summarize the algorithm and estimate its complexity in terms of the rank $n$ of and determinant $D$ of the input genus.

In \cite{DubeyHolenstein} the term quadratic form is used interchangeably with either the lattice or its Gram matrix by fixing a basis for the lattice, so we will adopt this convention throughout this section.
We use the notation $\nu_p(n)$ to denote the largest integral power of $p$ dividing $n$, where $p$ is a prime integer and $n$ is a positive integer.

At a high level, the algorithm runs in the following steps: 
\begin{enumerate}
    \item Identify an integer $t$ that has a primitive representation (see \cite{DubeyHolenstein}*{Definition 3}) in $\gamma$.
    \item Compute a quadratic form $Q_0$ that is $\overline{(Dt^{n-1})}$-equivalent (according to \cite{DubeyHolenstein}*{Definition 4} and using the notation \cite{DubeyHolenstein}*{(5)}) to $\gamma$, with its top-left entry equal to $t$.
    \item Based off of $Q_0$, generate a genus $\gamma_{n-1}$ of rank $n-1$. Recursively call the algorithm on $\gamma_{n-1}$, obtaining a representative form $Q_{n-1}$ and construct a representative based off of $Q_0$ and $Q_{n-1}$.
\end{enumerate}
The details of the algorithm can be found in \cite{DubeyHolenstein}*{Theorem 33}, and a proof of correctness can be found in \cite{DubeyHolenstein}*{Theorem 19}. The overview above is provided to help give context for our complexity analysis of the algorithm.

In our analysis, we will ignore bit complexity. In particular, we will assume that the arithmetic operations in $\Z$ and $\Z / p^k \Z$ take $O(1)$ time.
We first establish the time complexity of certain precomputations one can make prior to the main recursion of \cite{DubeyHolenstein}.

\begin{lem} \label{lem:nonQuadraticResidue}
    Let $p$ be an odd prime. There exists an algorithm which takes $p$ and a target failure probability $\delta\in(0,1)$ as input and returns an integer $k$ with $( \tfrac{k}{p}) = -1$ in
    \[O\left( \log \left( \tfrac{1}{\delta} \right) \log p \right)\]
    time, with chance of failure at most $\delta$.
\end{lem}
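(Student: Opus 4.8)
The plan is to use the standard random‑sampling approach: a uniformly random element of $\F_p^\times$ is a non‑residue with probability exactly $\tfrac{p-1}{2p} > \tfrac14$, and the Legendre symbol can be evaluated quickly, so repeating the experiment enough times drives the failure probability below $\delta$.

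First I would recall the density fact. Among the $p-1$ nonzero residues mod $p$, exactly $(p-1)/2$ are quadratic non‑residues; hence if $k$ is drawn uniformly from $\{1,\dots,p-1\}$, then $\Pr[(\tfrac{k}{p}) = -1] = \tfrac12$. (Even drawing from $\{1,\dots,p\}$, the probability is at least $\tfrac{(p-1)/2}{p} \ge \tfrac14$ for $p \ge 3$, which is all we need.) Next I would specify the algorithm: for $i = 1, 2, \dots, m$, sample $k_i$ uniformly at random, compute the Legendre symbol $(\tfrac{k_i}{p})$; return the first $k_i$ for which the value is $-1$, and declare failure if none is found. Computing $(\tfrac{k}{p})$ can be done via Euler's criterion $k^{(p-1)/2} \bmod p$ using fast modular exponentiation, which costs $O(\log p)$ arithmetic operations in $\Z/p\Z$ (or, if one prefers, via the quadratic‑reciprocity recursion, also $O(\log p)$); under the stated convention that arithmetic operations are $O(1)$, each trial costs $O(\log p)$ time.

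For the analysis, since each independent trial fails to produce a non‑residue with probability at most $\tfrac12$ (indeed we only need $\le 3/4$ from the cruder bound), after $m$ trials the overall failure probability is at most $2^{-m}$. Choosing $m = \lceil \log_2(1/\delta) \rceil = O(\log(1/\delta))$ makes this at most $\delta$. The total running time is $m$ trials of cost $O(\log p)$ each, i.e.
\[
O\!\left( \log\!\left( \tfrac{1}{\delta} \right) \log p \right),
\]
as claimed.

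**The main obstacle** is essentially bookkeeping rather than mathematics: there is no deep step here, only the need to state precisely which distribution $k$ is sampled from and to confirm the per‑trial cost. The one genuine choice is whether to insist on a deterministic‑time Legendre symbol evaluation (quadratic reciprocity gives $O(\log p)$ unconditionally) versus Euler's criterion (also $O(\log p)$ arithmetic operations); either suffices, so I would simply cite the reciprocity algorithm to keep the per‑trial bound clean and unconditional. Everything else is a routine geometric‑distribution tail estimate.
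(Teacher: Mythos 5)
Your proposal is correct and follows essentially the same route as the paper's proof: sample uniformly from $\{1,\dots,p-1\}$, note that exactly half the elements of $\F_p^\times$ are non-residues, test each candidate with a Legendre-symbol computation costing $O(\log p)$, and repeat $O(\log(1/\delta))$ times to drive the failure probability below $\delta$. The only difference is that you spell out the geometric tail bound and the choice of Legendre-symbol algorithm explicitly, which the paper leaves implicit.
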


\begin{proof}
    Exactly half of the elements of $\F_p^\times$ are quadratic residues. Sampling uniformly from $\{1,\dots,p-1\}$ gives a non-residue with probability $1/2$; each Legendre-symbol test costs $O(\log p)$. Repeating $O(\log(1/\delta))$ independent samples yields the claimed time bound and failure probability.
\end{proof}

\begin{lem} \label{lem:threeSigns}
    Let $p \geq 5$ be an odd prime, let $t$ be an integer and let $(\varepsilon_1, \varepsilon_2, \varepsilon_3) \in \{-1,1\}^3$. There exists a randomized algorithm which takes $p,\varepsilon_1,\varepsilon_2,\varepsilon_3$ and a target failure probability $\delta\in(0,1)$ as input and returns integers $a_1,a_2,a_3$ such that
    \[a_1 + a_2 \equiv a_3 \pmod{p}, \qquad \left( \frac{a_i}{p} \right) = \varepsilon_i \text{ for } i \in \{1, 2, 3\},\]
    in $O(\log ( \tfrac{1}{\delta} ) \log p)$ time, with chance of failure at most $\delta$.
\end{lem}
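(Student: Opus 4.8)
We want to find integers $a_1, a_2, a_3$ with $a_1 + a_2 \equiv a_3 \pmod p$ and prescribed Legendre symbols $\varepsilon_i$. The plan is to first reduce to the residue-counting problem modulo $p$: it suffices to find residue classes $\bar a_1, \bar a_2 \in \F_p^\times$ with $(\tfrac{a_1}{p}) = \varepsilon_1$, $(\tfrac{a_2}{p}) = \varepsilon_2$, and $(\tfrac{a_1 + a_2}{p}) = \varepsilon_3$, where implicitly $a_1 + a_2 \not\equiv 0$ so that the third symbol makes sense. So the real content is: \emph{random sampling finds such a pair with constant probability.}

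First I would fix a nonresidue $k$ with $(\tfrac{k}{p}) = -1$, obtained by Lemma~\ref{lem:nonQuadraticResidue} with failure probability, say, $\delta/2$, at cost $O(\log(1/\delta)\log p)$. Multiplying by $k$ swaps the two residue classes, so by replacing each $a_i$ with $k a_i$ if needed we may reduce to the case $\varepsilon_1 = \varepsilon_2 = \varepsilon_3 = 1$ (or handle the eight cases uniformly via such multiplications; it is cleanest to just prove the all-$+1$ case and deduce the rest). For the all-$+1$ case, the task is to find $x, y \in \F_p^{\times 2}$ with $x + y \in \F_p^{\times 2}$. Next I would argue via a counting estimate — this is where the $p \geq 5$ hypothesis is used. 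The number of pairs $(x,y)$ of nonzero squares with $x + y$ also a nonzero square is governed by a character sum: writing $\chi$ for the quadratic character and using $\mathbf 1_{z \in \F_p^{\times 2}} = \tfrac12(1 + \chi(z))$ for $z \neq 0$, the count is
\[
  \sum_{\substack{x,y \neq 0 \\ x+y \neq 0}} \tfrac18 \bigl(1+\chi(x)\bigr)\bigl(1+\chi(y)\bigr)\bigl(1+\chi(x+y)\bigr),
\]
and expanding, the main term is $\tfrac18 p^2$ while every cross term is a Jacobi-type sum bounded by $O(p)$ in absolute value (the relevant one-variable sums $\sum_x \chi(x)\chi(x+y)$ are classical and equal $-1$ for $y \neq 0$). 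Hence for $p$ large enough — and one checks $p \geq 5$ suffices, with $p = 3$ genuinely excluded — there are $\gg p^2$ good pairs, so a uniformly random pair $(x, y) \in (\F_p^\times)^2$ lies in the good set with probability bounded below by an absolute constant $c_0 > 0$.

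The algorithm then samples $(a_1, a_2)$ uniformly from $\{1, \dots, p-1\}^2$, tests the three Legendre symbols at cost $O(\log p)$ each, and repeats; $O(\log(1/\delta))$ trials drive the failure probability below $\delta/2$, and combined with the nonresidue computation the total failure probability is at most $\delta$ and the total cost is $O(\log(1/\delta)\log p)$, as claimed. I expect the main obstacle to be the character-sum estimate — specifically, being careful that the excluded cases ($x+y = 0$, or $x$, $y$ zero) contribute only $O(p)$ and do not swamp the main term, and pinning down that $p \geq 5$ (rather than some larger bound) is exactly what the estimate needs. Everything else — the reduction to the all-$+1$ case via multiplication by a nonresidue, and the standard amplification of success probability — is routine.
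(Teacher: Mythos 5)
Your algorithm is the one the paper uses: sample $(a_1,a_2)$ uniformly, set $a_3=a_1+a_2$, test the three Legendre symbols, and amplify over $O(\log(1/\delta))$ independent trials. The paper merely asserts that the proportion of good pairs is ``a positive constant bounded below uniformly in $p$''; your character-sum expansion is the right way to actually prove that assertion, and the estimates you quote (main term $\tfrac18 p^2$, the one-variable sums $\sum_x\chi(x)\chi(x+y)=-1$ for $y\ne0$, all cross terms $O(p)$) are correct. One structural point: the reduction to the all-$+1$ case does not work as described. Multiplying all three $a_i$ by a fixed nonresidue $k$ preserves $a_1+a_2\equiv a_3$ but flips all three Legendre symbols \emph{simultaneously}, so it only identifies the pattern $(\varepsilon_1,\varepsilon_2,\varepsilon_3)$ with its antipode $(-\varepsilon_1,-\varepsilon_2,-\varepsilon_3)$; you cannot adjust the signs independently while keeping the additive relation. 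This is harmless only because the reduction is unnecessary: inserting $\tfrac12(1+\varepsilon_i\chi(\cdot))$ in each factor handles an arbitrary sign pattern directly, and then the precomputed nonresidue is not needed here at all.

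The genuine gap is the claim that ``one checks $p\ge5$ suffices.'' It does not: for $p=5$ the quadratic residues are $\{1,4\}$, and $1+1=2$, $1+4=0$, $4+4=3$, so the pattern $(+,+,+)$ has \emph{no} solutions; dually $(-,-,-)$ fails as well ($2+2=4$, $2+3=0$, $3+3=1$). So the success probability is not bounded below uniformly for $p\ge5$ --- in fact the lemma as stated (and the paper's own proof) is false at $p=5$ for these two patterns, and the correct threshold is $p\ge7$. The asymptotic count $\tfrac{(p-1)(p-2)}{8}+O(1)$ has explicit error constants small enough to give positivity for all $p$ beyond a small bound, but since $p^2/8$ is only about $3$ at $p=5$, you must carry those constants explicitly (or check the finitely many small primes by hand, e.g.\ $2+2=4$ settles $(+,+,+)$ for $p=7$) rather than wave at ``$p$ large enough.'' As written, the verification that the constant lower bound kicks in exactly at the stated threshold is the one step of your argument that fails.
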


\begin{proof}
    Sample $a_1,a_2$ uniformly at random from $\F_p^\times$ and set $a_3=a_1+a_2$. For a fixed triple of desired Legendre symbols $(\varepsilon_1,\varepsilon_2,\varepsilon_3)$, the proportion of pairs $(a_1,a_2)$ giving those signs is a positive constant (bounded below uniformly in $p$). Therefore $O(\log(1/\delta))$ repetitions suffice, and each repetition costs $O(\log p)$ to check the three Legendre symbols, yielding the stated bound.
\end{proof}

From here on, we will assume that the algorithms in Lemma~\ref{lem:nonQuadraticResidue} and Lemma~\ref{lem:threeSigns} have been precomputed for every single possible set of inputs for $p \le B$ for an appropriate bound $B$, so that the result of each algorithm is instantly accessible. In Theorem~\ref{thm:dubeyHolensteinAlgorithmTime}, we will determine a finite set of precomputations that are sufficient for the rest of the algorithm.
In \cite{DubeyHolenstein2}, the algorithms run with a constant probability of failure. However, given the precomputations in Lemma~ \ref{lem:nonQuadraticResidue} and Lemma~\ref{lem:threeSigns}, the failing situations are eliminated.

Let $\mathcal{G}(n,D)$ be the set of genera of rank $n$ and determinant $D$.
For notational convenience, let $e_p = \nu_p (D)$. Let $f(n, D)$ denote the maximal time complexity to compute a representative of $\gamma \in \mathcal{G}(n,D)$. 

\begin{lem} \label{lem: recursiveStepRank4}
    Suppose $n \geq 4$. We have
    \[f(n, D) = O \left( ne_2^3 + \sum_{p \mid 2D} \left( n^{1+\psi} \log e_p + n^2\omega(D)  (\log p)^2  \right)\right) + f(n-1, 2^n D)\]
\end{lem}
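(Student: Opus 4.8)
The plan is to follow the three-step outline of the Dubey--Holenstein recursion given just before the statement and bound the cost of each step, charging the genuine recursive call at rank $n-1$ separately. Throughout I will use the standing assumption that arithmetic in $\Z$ and $\Z/p^k\Z$ is $O(1)$, and that the non-residue and ``three signs'' subroutines of Lemma~\ref{lem:nonQuadraticResidue} and Lemma~\ref{lem:threeSigns} have been precomputed and are available for free. The three contributions to bound are: (i) finding an integer $t$ with a primitive representation in $\gamma$; (ii) producing a form $Q_0$ that is $\overline{(Dt^{n-1})}$-equivalent to $\gamma$ with top-left entry $t$; and (iii) assembling the rank $n-1$ genus $\gamma_{n-1}$ and reconstructing a representative from $Q_0$ and the recursively obtained $Q_{n-1}$. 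Step (iii) is where the recursive term $f(n-1, 2^nD)$ enters: I must check that the determinant of $\gamma_{n-1}$ is bounded by $2^n D$ (it is, up to the extra factor absorbing the $p=2$ subtlety and the factor of $t$ divided back out), so that the recursion is correctly typed.

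First I would handle the local, prime-by-prime work. For each prime $p \mid 2D$ we must compute a local representative at $p$, normalize it so that a chosen entry equals $t$, and solve small local congruences; this involves a bounded number of $n \times n$ matrix operations over $\Z/p^{e_p+O(1)}\Z$, giving $O(n^{\psi})$ per such operation, and the number of operations scales like $\log e_p$ because one processes the Jordan constituents of $p$-adic valuation up to $e_p$ by a doubling/halving strategy — hence the $n^{1+\psi}\log e_p$ term, the extra factor of $n$ coming from iterating the normalization over the $n$ coordinates. The $n^2 \omega(D)(\log p)^2$ term I would attribute to the CRT gluing: combining the $\omega(D)$ local solutions into a global form entry-by-entry over an $n \times n$ matrix, where each CRT reconstruction modulo a product of prime powers costs $(\log p)^2$ with the fast-arithmetic-suppressed model, or alternatively to the Legendre/Hilbert symbol bookkeeping needed to match signs across the $\omega(D)$ ramified primes. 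The term $ne_2^3$ is the cost of the $p=2$ local analysis, which is genuinely heavier: the $2$-adic genus symbol carries oddity and sign data and one may have to search over $O(e_2)$ scales with $O(e_2^2)$ combinatorial adjustments per scale, times $n$ for the coordinate loop, and this is the one contribution that is not of the uniform shape of the sum over $p \mid 2D$.

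The main obstacle I expect is pinning down exactly which auxiliary computations in \cite{DubeyHolenstein}*{Theorem 33} survive the ``assume $O(1)$ arithmetic'' simplification and which still cost a power of $\log p$ or $\log e_p$ — in particular, justifying that the primitive-representation search in Step (1) does not contribute a term larger than those already listed, and that the bound $n \ge 4$ is precisely what is needed for the recursion to be well-posed (for $n \le 3$ one would hit a base case handled separately, e.g. by \cite{DubeyHolenstein}*{} directly). A secondary subtlety is tracking the determinant inflation: each descent multiplies the determinant by at most $t^{n-1}$ and $t$ itself divides a quantity controlled by $D$ and the $2$-adic correction, so one must argue the clean bound $2^n D$ rather than something that grows uncontrollably down the recursion; I would make this precise by invoking \cite{DubeyHolenstein}*{Definition 4} and the explicit formula \cite{DubeyHolenstein}*{(5)} for the allowed determinant change. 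Once these points are settled, summing the per-prime costs over $p \mid 2D$, adding the $2$-adic term $ne_2^3$, and appending $f(n-1, 2^nD)$ yields exactly the claimed recurrence; no single step requires more than routine but careful accounting of matrix sizes and modulus sizes.
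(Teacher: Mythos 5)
Your high-level plan (itemize the steps of the Dubey--Holenstein recursion, bound each, and charge the rank-$(n-1)$ call separately) is the same as the paper's, and your attribution of the $n^2\omega(D)(\log p)^2$ term to CRT gluing of matrix entries is correct. But the proposal has a genuine gap in accounting for the two dominant $n$-heavy terms, and it omits the step of the algorithm that actually produces them. In the paper's proof, both $ne_2^3$ and $n^{1+\psi}\log e_p$ arise \emph{after} the recursive call: once the recursion returns a representative $\widetilde H$ of $\gamma_{n-1}$, one must, for each $p \mid 2D$, compute a matrix $U_p$ with $U_p^\intercal H U_p \equiv \widetilde H \pmod{p^{L_p}}$, which is done via the $p$-adic canonical-form algorithms of \cite{DubeyHolenstein2}*{Theorems 12, 14}; the $e_2^3$ comes from the $2$-adic canonical form and the $n^{1+\psi}\log e_p$ from the general $p$-adic equivalence computation. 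These $U_p$ are then glued by a second CRT pass (the source of the second copy of $n^2\omega(D)(\log p)^2$). Your step (iii), ``reconstructing a representative from $Q_0$ and $Q_{n-1}$,'' skips this equivalence-matching entirely, and you instead attribute $n^{1+\psi}\log e_p$ to constructing and normalizing local representatives and $ne_2^3$ to a ``$2$-adic local analysis'' with a searched-over combinatorial structure. In the actual algorithm those earlier local steps cost only $O(n\omega(D))$ and $O(ne_2 + \sum_{p\mid D}(\log p + \log(ne_p)))$ respectively (the local forms $S_p$ and the completions $[\mathbf x_p, A_p]$ are block diagonal with blocks of bounded size, so there are no full $n\times n$ matrix operations there), so your accounting would not reproduce the claimed bound: the terms you derive from the steps you describe are all dominated, and the steps that generate the stated terms are absent from your decomposition.

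A secondary point: you correctly flag the determinant inflation as a subtlety but leave it unresolved. The clean statement is that $\det(\gamma_{n-1}) = Dt^{n-2}$ with $t \mid D$, and after dividing out $\gcd(\gamma_{n-1})$ one has $\det(\gamma'_{n-1}) \mid 2^n\det(\gamma)$, which is proved in \cite{DubeyHolenstein}*{Theorem 33}; this is what types the recursive call as $f(n-1, 2^nD)$ and is needed (not merely convenient) for the summation in Theorem~\ref{thm:dubeyHolensteinAlgorithmTime}, where the accumulated $2$-power is what produces the $n^2(e_2+n^2)^3$ term. To repair the proof you should explicitly include the post-recursion equivalence step and cite the canonical-form complexity results for it, rather than trying to extract the dominant terms from the pre-recursion local setup.
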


\begin{proof}
    We break the complexities down step by step:
    \begin{enumerate}
        \let\fullwidthdisplay\relax
        \item Our first step is to find an integer $t$ with a primitive representation in $\gamma$, such that $t \mid D$. Using \cite{DubeyHolenstein}*{Lemma 26}, this step takes
        \[O\left( \sum_{p \mid 2D } \left(\log p + \log e_p \right) \right)\]
        time. Define $q = \overline{Dt^{n-1}}$, and let $K_p = \nu_p (q)$ for each prime $p$. Note that by our construction of $t$ with $t \mid D$, the set of primes dividing $q$ is precisely $P$.
        Note that
        \[K_p = (n-1) \nu_p (t) + e_p \leq n e_p,\]
        which will be used in step (3).

        \item Next, for each prime $p \mid 2D$, we construct an $n \times n$ Gram matrix $S_p$ that is $p^{K_p}$-equivalent to $\gamma$. By construction \cite{DubeyHolenstein}*{Lemma 17}, this $S_p$ is block diagonal, with blocks of size at most $2 \times 2$. Overall, the complexity of this step is 
        $O(n\omega(D))$.
        
        \item Next, for each prime $p \mid 2D$, we construct a $1 \times n$ column vector $\mathbf x_p$ of integers representing $t$ over $\Z/ p^{K_p} \Z $, as well as an $(n-1) \times n$ matrix $A_p$ such that $[\mathbf x_p,A_p] \in \GL_n (\Z/ p^{K_p} \Z)$.
        We follow the construction outlined in \cite{DubeyHolenstein}*{Theorem 33}. By this construction, constructing $A_p$ takes $O(1)$ time, so it really suffices to compute $\mathbf x_p$. This can be done by \cite{DubeyHolenstein}*{Lemma 25}, which costs $O(\log p + \log k)$, \cite{DubeyHolenstein1}*{Lemma 17}, which costs $O(k)$, and \cite{DubeyHolenstein}*{Lemma 24}, which runs in $O(1)$. We must apply the algorithm of  \cite{DubeyHolenstein}*{Lemma 25} once for each odd prime $p \mid D$, for a total of $\omega(2D)-1$ times. This step thus runs in
        $$
        O \left( n e_2 + \sum_{p \mid D} \left( \log p + \log (n e_p) \right) \right)
        $$
        time.
        Note that, by construction, $[\mathbf x_p,A_p]$ is block diagonal with blocks of size at most $4 \times 4$.

        \item Next, for each prime $p \mid 2D$, we compute the products
        \[\mathbf d_p = \mathbf x_p^{\intercal} S_p A_p \pmod{p^{K_p}}, \qquad H_p = \big(tA_p^{\intercal} S_p A_p - \mathbf d_p^\intercal \mathbf d_p\big).\]
        The multiplication $\mathbf d_p^\intercal \mathbf d_p$ takes $O(n^2)$ time. Besides that, since $A_p$ and $S_p$ are block diagonal with blocks of size at most $4$, many of the required multiplications take $O(n)$ time; aggregating the cost over $p \mid 2D$ gives
        $ O(n^2\omega(D))$
        time.
        
        \item Next, we define $\gamma_{n-1}$ as the genus symbol with
        $\det(\gamma_{n-1}) = Dt^{n-2}$,
        and local symbol given by $H_p$ for each prime $p$ (note that $H_p$ is an $(n-1) \times (n-1)$ matrix). Define
        \[\gamma'_{n-1} = \frac{\gamma_{n-1}}{\gcd( \gamma_{n-1})}.\]
        This computation takes $O \left(\sum_{p \mid 2D} \log p \right)$. We compute a representative of $\gamma_{n-1}$ by recursively applying our algorithm on $\gamma'_{n-1}$ and scaling the result by $\gcd( \gamma_{n-1})$. It is shown in \cite{DubeyHolenstein}*{Theorem 33} that $\det(\gamma'_{n-1}) \mid \det(\gamma) \cdot 2^n$. Therefore, computing a representative of $\gamma_{n-1}$ takes at most
        \[f(n-1, 2^n D) + O \left(\sum_{p \mid 2D} \log p \right)\]
        time. 
        In particular, recall that we constructed $t$ in step (1) to be a divisor of $d$, so the set of primes dividing $Dt^{n-2}$ is the same as those dividing $2D$.
        Denote the representative of $\gamma_{n-1}$ as $\widetilde{H}$.
        
        \item From our $\omega(2D)$ vectors of the form $\mathbf x_p$, we compute a vector $\mathbf x$ of integers such that $\mathbf x \equiv \mathbf x_p \pmod{p^{K_p}}$ for each $p \mid 2D$. We analogously define $A$, $S$ and $H$. Computing these vectors/matrices is done by the Chinese Remainder Theorem on individual entries. Since $H$ is not diagonal, we have $O(n)$ nonzero entries; for each entry, there is a system of $\omega(2D)$ modular congruences we need to solve. Each entry takes $O(\omega(D)\sum_{p \mid 2D} (\log p)^2)$ time to solve. So, the overall complexity of this step is
        \begin{equation} \label{eq: step 6 DH}
        f_6(n,D) = O \left( n^2 \omega(D) \sum_{p \mid 2D} (\log p)^2 \right).
        \end{equation}

        \item For each $p \mid 2D$, we compute a matrix $U_p$ such that \[U_p^\intercal H U_p \equiv \widetilde{H} \pmod{p^{L_p}},\]
        where $L_p = \nu_p (\overline{Dt})$. 
        Note that, since $t \mid D$, we always have $L_p \leq 2e_p+ 1$ for odd $p$ and $L_p \leq 2 e_p + 3$ for $p=2$.
        Using  \cite{DubeyHolenstein2}*{Theorems 12, 14}, we complete this step in
        \begin{equation} \label{eq: step 7 DH}
        f_7(n,D) = O\left( ne_2^3 + \sum_{p \mid 2D} \left(n^{1 + \psi} \log \left( e_p \right) + n \log p \right) \right)
        \end{equation}
        time.

        \item Similarly to step (6), we compute a matrix $U$ of integers such that
        $U \equiv U_p \pmod{p^{K_p}}$
        for each $p \mid 2D$. This takes
        \begin{equation} \label{eq: step 8 DH}
        f_8(n,D) = O \left(n^2 \omega(D) \sum_{p \mid 2D} (\log p)^2 \right)
        \end{equation}
        time to solve.

        \item Finally, computing \cite{DubeyHolenstein}*{(61)} costs $ O(n^2)$.
    \end{enumerate}
    Summing these steps, we see that the dominant terms are coming from steps (5), (6), (7) and (8). Therefore
    \begin{equation} \label{eq: recursion detailed}
        f(n,D) = (1+o(1)) \sum_{i=6}^8 f_i(n,D) + f(n-1, 2^n D),
    \end{equation}
    yielding the result. 
\end{proof}

\begin{thm} \label{thm:dubeyHolensteinAlgorithmTime}
    The algorithm laid out in \cite{DubeyHolenstein}*{Theorem 33} runs in
    \[
         O\Bigg( n^2 (e_2 + n^2)^3 + n^{2+\psi} \sum_{p \mid 2D} \log e_p + n^3 \omega(D) \sum_{p \mid 2D} (\log p)^2 + \log \left( \tfrac{\omega(D)}{\delta } \right) \sum_{p \mid 2D}  \log p \Bigg).\]
    time, with at most a $\delta$ chance of failure.
\end{thm}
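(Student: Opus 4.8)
The plan is to iterate the recursion from Lemma~\ref{lem: recursiveStepRank4} down from rank $n$ to the base case, track how the determinant grows at each step, and sum the per-step costs. First I would unwind the recursion: applying Lemma~\ref{lem: recursiveStepRank4} repeatedly, after $k$ steps the rank has dropped to $n-k$ and the determinant has been replaced by $D_k$, where $D_0 = D$ and $D_{j+1} = 2^{n-j} D_j$. Thus $D_k = D \cdot 2^{\sum_{j=0}^{k-1}(n-j)} = D \cdot 2^{kn - k(k-1)/2}$, so the full recursion terminates at some small constant rank (say $3$, handled by a direct base case), and the accumulated $2$-adic valuation is $e_2 + O(n^2)$, which explains the $n^2(e_2+n^2)^3$ term: step (7) contributes $(n-k)\cdot(\nu_2(D_k))^3 = O(n(e_2+n^2)^3)$ per level, and there are $O(n)$ levels. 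The key observation that keeps everything finite is that although the determinant grows by powers of $2$, the set of odd primes dividing it never changes (since $t \mid D$ at every step), so $\omega$ and the sums $\sum_{p\mid 2D}(\log p)^2$, $\sum_{p\mid 2D}\log p$ stay fixed throughout the recursion, and the odd-prime valuations $e_p$ are also unchanged.

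Next I would bound each family of terms summed over the $O(n)$ recursion levels. The matrix-multiplication term $n^{1+\psi}\log e_p$ from steps (5)--(8) becomes, at level $k$, $(n-k)^{1+\psi}\log e_p \le n^{1+\psi}\log e_p$ for odd $p$; summing over $k$ gives $n^{2+\psi}\sum_{p\mid 2D}\log e_p$ (with the caveat that at level $k$ the $2$-adic "$e_2$" is really $\nu_2(D_k) = e_2 + O(n^2)$, so $\log e_2$ is replaced by $\log(e_2+n^2)$, which I would absorb into the first term or note is $O(\log(e_2+n^2)) = O(\log e_2 + \log n)$ and hence dominated). The CRT terms $n^2\omega(D)\sum_{p\mid 2D}(\log p)^2$ from steps (6) and (8), summed over $O(n)$ levels, give $n^3\omega(D)\sum_{p\mid 2D}(\log p)^2$. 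The precomputation cost — invoking Lemmas~\ref{lem:nonQuadraticResidue} and~\ref{lem:threeSigns} for every odd prime $p\mid D$ with failure probability budget $\delta$ split across the $O(\omega(D))$ primes (hence the $\log(\omega(D)/\delta)$) — is a one-time cost of $O(\log(\omega(D)/\delta)\sum_{p\mid 2D}\log p)$ that dominates the $\log p + \log e_p$ terms from steps (1), (3), (5). Adding these four families of bounds gives exactly the stated complexity.

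The main obstacle I expect is bookkeeping the $2$-adic growth carefully enough to justify that the blow-up is only polynomial in $n$ and does not infect the odd-prime contributions. Concretely: one must verify that $\det(\gamma'_{n-1}) \mid \det(\gamma)\cdot 2^n$ (cited from \cite{DubeyHolenstein}*{Theorem 33}) really does bound the $2$-part growth at each step by $2^n$ and leaves the odd part dividing $D t^{n-2}$ with the same odd prime support as $D$, so that $\nu_p$ for odd $p$ genuinely does not increase down the recursion — otherwise the $\sum_{p\mid 2D}\log e_p$ and $\sum (\log p)^2$ sums would themselves grow with the recursion depth. A secondary subtlety is confirming the recursion bottoms out: for rank below $4$ one falls outside the hypothesis of Lemma~\ref{lem: recursiveStepRank4} and must appeal to a separate (constant-rank) base case in \cite{DubeyHolenstein}, whose cost is polynomial in $\log(\text{determinant})$ and hence, given the accumulated determinant $D\cdot 2^{O(n^2)}$, contributes only lower-order terms absorbed into $n^2(e_2+n^2)^3$. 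Once these two points are nailed down, the remaining summation is routine.
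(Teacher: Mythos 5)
Your proposal is correct and follows essentially the same route as the paper: unroll the recursion of Lemma~\ref{lem: recursiveStepRank4}, observe that the determinant at rank $i$ is $2^{(n+i+1)(n-i)/2}D$ so the $2$-adic valuation grows only to $e_2+O(n^2)$ while the odd prime support and valuations do not increase, sum the per-level costs of steps (6)--(8), and add the one-time precomputation cost with the failure budget split over the $O(\omega(D))$ primes. The caveats you flag (the $\log(e_2+n^2)$ versus $\log e_2$ discrepancy and the rank $\le 3$ base case) are handled the same way -- absorbed into the dominant $n^2(e_2+n^2)^3$ term -- in the paper's argument.
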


\begin{proof}
    In Lemma~\ref{lem: recursiveStepRank4}, we make use of the fact that we have done the precomputations in Lemma~\ref{lem:nonQuadraticResidue} and Lemma~\ref{lem:threeSigns}. As such, before the recursion, our first step is to apply these two algorithms on every prime $p \mid 2D$. To ensure that our algorithm fails with probability at most $\delta$, it suffices to allow a $\tfrac{ \delta}{2 \omega(2D) }$ chance of failure for each call of either algorithm. Thus, our precomputations take a total time of 
    \[
    O \left( \log \left( \tfrac{\omega(D)}{\delta } \right) \sum_{p \mid 2D}  \log p \right).
    \]
    
    We turn to compute the total runtime of the recursive part of the algorithm, using the recursion we computed in Lemma~\ref{lem: recursiveStepRank4}, and a similar analysis of the cases $n \le 3$. 
    From \eqref{eq: recursion detailed}, we obtain
    $$
    f(n,D) = (1+o(1)) \sum_{j=6}^8 \sum_{i=1}^n f_j \left(i, 2^{\frac{(n+i+1)(n-i)}{2}} D \right).
    $$
    By \eqref{eq: step 6 DH} and \eqref{eq: step 8 DH}, we have
    $$
    \sum_{i=1}^n f_j \left(i, 2^{\frac{(n+i+1)(n-i)}{2}} D \right) 
    = O \left( n^3 \omega(D) \sum_{p \mid 2D} (\log p)^2 \right)
    $$
    for $j = 6,8$. From \eqref{eq: step 7 DH} we get
    $$
    \sum_{i=1}^n f_7 \left(i, 2^{\frac{(n+i+1)(n-i)}{2}} D \right)
    = O \left( n^2(e_2 + n^2)^3 + n^{2 + \psi} \sum_{p \mid 2D} \log e_p + n^2 \sum_{p \mid 2D} \log p \right).
    $$
    
    Summing everything together gives us the final claimed complexity. The bottleneck with respect to $n$ lies in the $n^8$ term, which can be traced back to computing the canonical $2$-adic form of $H$ in step 7 of Lemma~\ref{lem: recursiveStepRank4}.
\end{proof}

\begin{rem}
    We note that the above bound is sharp, as for genera whose local $2$-adic symbol consists of $n$ blocks which are $1$-dimensional of type II, the recursive step increases the $2$-adic valuation of $D$ by $n$.
\end{rem}

We obtain the following immediate corollary.

\begin{cor}\label{cor:complexity-dh}
    Let $\gamma$ be a genus. Let $n$ and $D$ be the rank and determinant of $\gamma$. As just a function of $n$ and $\ell = \log D$, the algorithm laid out in \cite{DubeyHolenstein} runs in
    \[O\left(n^2 ( n^2 + \ell)^3 + n^{2+\psi} \ell \log( \ell) + n^3 \ell^2 \log( \ell)^2  + \ell^2 \log \left( \frac{\ell}{\delta} \right)\right)\]
    time, with chance of failure at most $\delta$.
\end{cor}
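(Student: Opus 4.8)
The plan is to obtain Corollary~\ref{cor:complexity-dh} as a direct translation of Theorem~\ref{thm:dubeyHolensteinAlgorithmTime} into the variables $n$ and $\ell = \log D$, by bounding each of the four summands of the theorem's complexity expression using standard estimates on the arithmetic functions attached to $D$. First I would recall the elementary bounds: $\omega(D) = O(\ell)$ (in fact $O(\ell/\log\ell)$, but $O(\ell)$ suffices here), $\sum_{p \mid 2D} \log p \le \log(2D) = O(\ell)$, and $e_p = \nu_p(D) \le \log_2 D = O(\ell)$ for every $p$, so $\log e_p = O(\log \ell)$ and hence $\sum_{p \mid 2D} \log e_p = O(\omega(D) \log \ell) = O(\ell \log \ell)$. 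Similarly $\sum_{p \mid 2D} (\log p)^2 \le \bigl(\sum_{p \mid 2D} \log p\bigr)^2 = O(\ell^2)$, and $e_2 = \nu_2(D) = O(\ell)$.

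Next I would substitute these into the four terms of Theorem~\ref{thm:dubeyHolensteinAlgorithmTime} one at a time. The first term $n^2(e_2 + n^2)^3$ becomes $n^2(n^2 + \ell)^3$ after replacing $e_2$ by its bound $O(\ell)$. The second term $n^{2+\psi}\sum_{p\mid 2D}\log e_p$ becomes $O(n^{2+\psi}\,\ell\log\ell)$. The third term $n^3\omega(D)\sum_{p\mid 2D}(\log p)^2$ becomes $O(n^3\,\ell\cdot\ell^2) = O(n^3\,\ell^2\log\ell)$ once one also uses $\omega(D) = O(\ell/\log\ell)$ to trade one factor of $\ell$ for $\ell/\log\ell$, which is where the $\log(\ell)^2$ in the corollary (rather than $\log(\ell)^3$) comes from; alternatively, one can simply note $\omega(D) \sum_{p \mid 2D} (\log p)^2 \le \omega(D) \bigl(\max_p \log p\bigr) \sum_{p\mid 2D}\log p$ and regroup. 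The fourth term $\log\bigl(\tfrac{\omega(D)}{\delta}\bigr)\sum_{p\mid 2D}\log p$ becomes $O\bigl(\ell\log(\tfrac{\ell}{\delta})\bigr)$; writing the displayed bound in the corollary with $\ell^2 \log(\tfrac{\ell}{\delta})$ is then a (non-tight but valid) upper bound, absorbing the extra factor of $\ell$. Collecting the four resulting terms yields exactly the claimed expression, and the failure probability is unchanged since we have not altered the algorithm, only re-expressed its running time.

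The one point that requires a little care — and the only place anything nontrivial happens — is matching the precise exponents of $\log\ell$ appearing in the corollary against the crude substitutions above; in particular one must decide how sharply to bound $\omega(D)$ and how to distribute the surplus factors of $\ell$ among the summands so that the stated form is a correct upper bound. Since the corollary is stated with $O(\cdot)$ and every individual replacement above is an inequality, no genuine obstacle arises: each term of Theorem~\ref{thm:dubeyHolensteinAlgorithmTime} is dominated by the corresponding term of the corollary, and summing preserves the bound. Thus the proof is essentially: cite Theorem~\ref{thm:dubeyHolensteinAlgorithmTime}, recall the four estimates $\omega(D) = O(\ell)$, $\sum_{p\mid 2D}\log p = O(\ell)$, $\sum_{p\mid 2D}(\log p)^2 = O(\ell^2)$, and $e_p = O(\ell)$, and substitute.
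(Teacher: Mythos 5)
Your overall strategy --- substitute elementary bounds on $\omega(D)$, $\sum_{p\mid 2D}\log p$, $\sum_{p\mid 2D}(\log p)^2$ and $e_p$ into Theorem~\ref{thm:dubeyHolensteinAlgorithmTime} --- is exactly what ``immediate corollary'' calls for, and your treatment of the first, second and fourth terms is fine. The problem is the third term. The displayed identity $O(n^3\,\ell\cdot\ell^2)=O(n^3\,\ell^2\log\ell)$ is simply false ($\ell^3\neq O(\ell^2\log\ell)$), and neither of your two suggested repairs works: using $\omega(D)=O(\ell/\log\ell)$ gives $O(n^3\ell^3/\log\ell)$, and the regrouping $\omega(D)\bigl(\max_p\log p\bigr)\sum_{p\mid 2D}\log p$ gives $O(\omega(D)\,\ell^2)=O(\ell^3/\log\ell)$ as well; both exceed the target $n^3\ell^2(\log\ell)^2$ by a factor tending to infinity. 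Worse, the term-by-term domination you assert at the end (``each term of the theorem is dominated by the corresponding term of the corollary'') genuinely fails here: taking $D=q\cdot\prod_{p\le y}p$ with $q$ a prime of size about $\sqrt{D}$ and the primorial part also about $\sqrt{D}$, one has $\omega(2D)\asymp \ell/\log\ell$ while $\sum_{p\mid 2D}(\log p)^2\ge(\log q)^2\asymp\ell^2$, so $\omega(2D)\sum_{p\mid 2D}(\log p)^2\asymp\ell^3/\log\ell\gg\ell^2(\log\ell)^2$. So the stated third term cannot be obtained by these substitutions, and your proof has a real gap there (arguably inherited from the statement itself).

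Two ways to close it honestly. Either replace the third term by the bound your substitutions actually yield, namely $O(n^3\ell^3/\log\ell)$ (or the cruder $O(n^3\ell^3)$); or invoke the standing hypothesis $\log D\ll n$ of the main theorems, under which $n^3\ell^3=O(n^4\ell^2)=O\bigl(n^2(n^2+\ell)^3\bigr)$, so the problematic term is absorbed by the first summand and the corollary's displayed expression remains a correct upper bound for the total. As written, the corollary carries no such hypothesis and your argument does not appeal to it, so you must do one or the other explicitly rather than assert that ``no genuine obstacle arises.''
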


We justify using Algorithm~\ref{alg:sage_representative} with \cite{Hanke}*{Algorithm 4.6} over  \cite{DubeyHolenstein}.

\begin{lem} \label{lem:DH vs Brandhorst}
    For $\log D \ll n$, the running time complexity of
    Algorithm~\ref{alg:sage_representative}, when using \cite{Hanke}*{Algorithm 4.6}, is better than that of \cite{DubeyHolenstein}.
\end{lem}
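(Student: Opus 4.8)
The plan is to compare the two complexity bounds term by term under the hypothesis $\log D \ll n$, i.e.\ $\ell := \log D = o(n)$, using Corollary~\ref{cor:rep with hanke} for Algorithm~\ref{alg:sage_representative} with \cite{Hanke}*{Algorithm 4.6} and Corollary~\ref{cor:complexity-dh} for the Dubey--Holenstein algorithm. From Corollary~\ref{cor:rep with hanke}, the Brandhorst--Hanke running time in terms of $n$ and $\ell$ is
\[
O\!\left(n^3 \frac{\ell}{\log \ell} + n^{\psi}\ell + n\frac{\ell^3}{\log \ell}\right),
\]
while Corollary~\ref{cor:complexity-dh} gives (absorbing the failure-probability term, which is lower order) a Dubey--Holenstein running time of
\[
O\!\left(n^2(n^2+\ell)^3 + n^{2+\psi}\ell\log\ell + n^3\ell^2(\log\ell)^2\right).
\]

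First I would record the crude bound $\psi < 3$, so that $n^{\psi}\ell = O(n^3\ell)$; hence the entire Brandhorst--Hanke cost is $O(n^3\ell + n\ell^3/\log\ell)$, and since $\ell = o(n)$ we have $n\ell^3/\log\ell = o(n^3\ell/\log\ell) = o(n^3\ell)$, so the Brandhorst--Hanke bound simplifies to $O(n^3\ell)$ — in fact to $O(n^3\ell/\log\ell)$ but $O(n^3\ell)$ suffices. Next I would lower-bound the Dubey--Holenstein cost: the term $n^2(n^2+\ell)^3 \geq n^2 \cdot (n^2)^3 = n^8$ alone already dominates. It remains to check $n^8$ grows strictly faster than the Brandhorst--Hanke bound $O(n^3\ell)$: since $\ell = o(n)$, we have $n^3\ell = o(n^4) = o(n^8)$. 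Thus every term of the Brandhorst--Hanke bound is $o$ of a single term in the Dubey--Holenstein bound, which gives the claim.

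The only point requiring a little care is that the hypothesis "$\log D \ll n$" must be read as $\ell = o(n)$ (or at worst $\ell = O(n)$, in which case $n^3\ell = O(n^4) = o(n^8)$ still works), so the argument is robust to the precise interpretation of $\ll$; I would state this explicitly so no gap is perceived. The main obstacle — really the only subtlety — is bookkeeping: making sure that after substituting $\psi < 3$ and $\ell = o(n)$, every one of the three Brandhorst--Hanke terms is genuinely dominated, and in particular that the "worst" cross term $n\ell^3/\log\ell$ does not secretly beat $n^8$ (it cannot, since $\ell = o(n)$ forces $n\ell^3 = o(n^4)$). Once these substitutions are in place the comparison is immediate, and I would close by remarking that the gap is in fact substantial — the Dubey--Holenstein exponent in $n$ is at least $8$ versus at most $\psi+1 < 4$ for the term involving $\ell$, and at most $3$ (up to log factors) for the $n$-only part.
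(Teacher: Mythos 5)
Your proposal is correct and follows essentially the same route as the paper: both compare the upper bound from Corollary~\ref{cor:rep with hanke} against the Dubey--Holenstein bound under the hypothesis $\log D \ll n$. The only cosmetic difference is that the paper works with the arithmetic-function form of the bounds and dominates each term of the Hanke-based bound by a distinct term of the Dubey--Holenstein bound, whereas you collapse everything to $O(n^3\ell)$ versus the single $n^8$ term; both arguments are valid and rely on the same estimates.
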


 \begin{proof}
     According to Theorem~\ref{thm:dubeyHolensteinAlgorithmTime},
     the time complexity of \cite{DubeyHolenstein}
     is 
     \[
     O \left(n^8 + n^{2+\psi} \sum_{p \mid D} \log \nu_p(D) + n^3 \omega(D) \sum_{p \mid D} (\log p)^2 \right).
     \]

     According to Corollary~\ref{cor:rep with hanke}, the time complexity of Algorithm~\ref{alg:sage_representative}, when using \cite{Hanke}*{Algorithm 4.6} for the maximal overlattice algorithm, is 
     \[
     O\left(n^3\omega(D) + n^2 \Omega(D) + n^\psi \sum_{p \mid D} \log \nu_p(D) + n \omega(D) \Omega(D) \log D \right).
     \]  

     Since $\omega(D) \ll \log D \ll n$, we have $n^3 \omega(D) \ll n^8$. Clearly, 
     $$
     n^{\psi} \sum_{p \mid D} \log \nu_p(D) \ll n^{2+\psi} \sum_{p \mid D} \log \nu_p(D),
     $$
     and as $\Omega(D) \ll \log D$, we also have
     $$
     n \omega(D) \Omega(D) \log D \ll n \omega(D) (\log D)^2 
     \ll n^3 \omega(D) \sum_{p \mid D} (\log p)^2.
     $$

     Adding these three inequalities, we obtain the result.
\end{proof}

\section{Maximal overlattices} \label{sec:maximal overlattices}

In this section we describe and analyze the running-time complexity of two algorithms that, given a lattice $L$, return a maximal overlattice of $L$. The first was implemented by Simon Brandhorst in \cite{SageMath}, following \cite{Kirschmer} and the second is described in \cite{Hanke}. Although \cite{Hanke} describes and proves correctness of the algorithms, we require an analysis of the time complexities of these algorithms for our purposes; this is the content of this section.

\subsection{Brandhorst maximal overlattice algorithm}

In both this section and Section~\ref{sec:local modifications}, normal form is defined as in \cite{MirMor2009}*{Def. 4.6}. Let $M(k)$ denote the cost of multiplying $k$-bit integers. By \cite{Harvey-Hoeven}, one may take $M(k)=O(k\log k)$.

There are two parts to this algorithm: finding a maximal overlattice over $\Z_p$ for odd primes $p$, and finding a maximal overlattice over $\Z_2$. The overall algorithm simply takes the maximal overlattice over $\Z_p$ for all relevant primes $p$.

For an odd prime $p$, in order to find a maximal overlattice over $\Z_p$, one begins by $p$-saturating the lattice. Recall that an integral $\Z$-lattice $L$ is \defi{$p$-saturated} if $pD_p(L)=0$, where 
$D_p(L) = D(L_p) = \dual{L_p} / L_p$ is its \defi{$p$-discriminant} group.
\SetKwFunction{OPSaturate}{OP\allowbreak-Saturate}
\SetKwFunction{OPOverlattice}{OP\allowbreak-Overlattice}

Algorithm~\ref{alg:Zp_overlattice} implements \OPOverlattice{$L, p$}, where $L$ is a $p$-saturated $\Z$-lattice and $p$ is an odd prime. It constructs a $p$-saturated $\Z$-lattice $L'$ satisfying
\begin{enumerate}
    \item $L_p\subseteq L'_p$, with strict inclusion when $L_p$ is not $\Z_p$-maximal;
    \item $L_q=L'_q$ for all primes $q\neq p$.
\end{enumerate}
If $L_p$ is already maximal, the algorithm returns $L$.

\RestyleAlgo{ruled}
\begin{algorithm}[tbp]
    \caption{\protect\OPOverlattice{$L,p$}.} \label{alg:Zp_overlattice}
    \KwData{A lattice $L$ and an odd prime $p$}
    $\mathcal D\gets D_p(L)$;
    $Q\gets$ the quadratic form on $\mathcal D$;
    $D\gets \det(Q)$\;
    $B\gets$ a basis of $\mathcal D$ with Gram matrix in normal form\;

    \uIf{$-\nu_p(D)\le 1$}{
        \Return{$L$}\;
    }
    \uElseIf{$-\nu_p(D)=2$}{
        Let $\mathbf a, \mathbf b$ be the two elements of $B$\;
        $a\gets p\cdot Q(\mathbf a),\quad b\gets p\cdot Q(\mathbf b)$\;
        \eIf{$-b/a$ is not a quadratic residue modulo $p$}
        {\Return{$L$}\;}
        {
            Let $x$ be such that $x^2\equiv -\frac ba\pmod p$\;
            $\mathbf t\gets x\mathbf a+\mathbf b$\;
        }
    }
    \uElseIf{$-\nu_p(D)\ge 3$}{
        Choose $\mathbf a$, $\mathbf b$, $\mathbf c$ to be three elements of $B$\;
        $a\gets p\cdot Q(\mathbf a), \quad b\gets p\cdot Q(\mathbf b),\quad c\gets p\cdot Q(\mathbf c)$\;
        \eIf{$-b/a$ is a quadratic residue modulo $p$}
        {
            Let $x$ be such that $x^2\equiv -\frac ba\pmod p$\;
            $\mathbf t\gets x\mathbf a+\mathbf b$
        }
        {
            Let $x$ and $y$ be such that $ax^2+by^2+c\equiv 0\pmod p$\;
            $\mathbf t=x\mathbf a+y\mathbf b+\mathbf c$
        }
    }
    $\mathbf t'\gets$ a preimage of $\mathbf t$ in $L^\vee$\;
    \Return{$L+\langle\mathbf t'\rangle$}\;
\end{algorithm}

\begin{lem} \label{lem:Zp_overlattice_proof}
    Algorithm~\ref{alg:Zp_overlattice} produces the correct output.
\end{lem}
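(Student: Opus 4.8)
The plan is to reduce the whole statement to the standard dictionary between integral $\Z_p$-overlattices of a $p$-saturated lattice and totally isotropic subgroups of its discriminant form, and then to the classification of quadratic forms over $\F_p$. Fix the odd prime $p$ and write $\mathcal D = D_p(L) = \dual{L_p}/L_p$ with its $\Q_p/\Z_p$-valued quadratic form $Q$. Since $L$ is $p$-saturated, $p\mathcal D = 0$, so $\mathcal D$ is an $\F_p$-vector space and $pQ$ is a nondegenerate $\F_p$-valued quadratic form on it, diagonalized by the normal-form basis $B$. The first facts I would record are: (i) $\dim_{\F_p}\mathcal D = -\nu_p(D)$, which matches the three branches of the algorithm with the dimension of $\mathcal D$; and (ii) $L_p$ is $\Z_p$-maximal if and only if $Q$ is anisotropic. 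For (ii), an integral $\Z_p$-overlattice $M \supseteq L_p$ corresponds to the totally isotropic subgroup $M/L_p \subseteq \mathcal D$, and conversely any nonzero isotropic vector $v \in \mathcal D$ spans a totally isotropic line $\langle v\rangle$ — here $B(v,v) = 2Q(v) = 0$ and $p$ is odd is used — yielding a proper integral overlattice $L_p + \langle v'\rangle$.

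Next I would dispatch the three branches using the structure of quadratic forms over $\F_p$ for $p$ odd. If $-\nu_p(D) \le 1$, then $pQ$ is the zero form or a nonzero unary form $\langle a\rangle$, hence anisotropic, so $L_p$ is maximal and returning $L$ is correct. If $-\nu_p(D) = 2$, then $pQ \cong \langle a,b\rangle$ in the normal-form basis with $a,b \in \Z_p^{\times}$, which is anisotropic exactly when $-b/a$ is a nonsquare modulo $p$; in that case returning $L$ is correct, and otherwise $\mathbf t = x\mathbf a + \mathbf b$ with $x^2 \equiv -b/a$ satisfies $pQ(\mathbf t) = ax^2 + b \equiv 0 \pmod p$, i.e. $Q(\mathbf t) = 0$ in $\Q_p/\Z_p$, and $\mathbf t \ne 0$ since the coefficient of $\mathbf b$ is a unit. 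If $-\nu_p(D) \ge 3$, then every ternary form over $\F_p$ is isotropic, and the algorithm produces an isotropic vector inside the ternary subform $\langle a,b,c\rangle$: if $-b/a$ is a square it already lies in $\langle a,b\rangle$ as above, and otherwise $\langle a,b\rangle$ is anisotropic binary, hence universal over $\F_p$, so it represents $-c$ and $\mathbf t = x\mathbf a + y\mathbf b + \mathbf c$ is a nonzero isotropic vector. Along the way I would note that the Legendre-symbol tests and the solutions $x$, $(x,y)$ always exist, so each branch is well-defined.

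Finally I would verify that, in the branches producing a nonzero isotropic $\mathbf t \in \mathcal D$, the lattice $L' = L + \langle\mathbf t'\rangle$ has the stated properties. Using $\dual{L}/L = \bigoplus_q D_q(L)$, one may and should pick the preimage $\mathbf t' \in \dual{L}$ so that its image in $D_q(L)$ vanishes for every $q \ne p$; then $\mathbf t' \in L_q$, so $L'_q = L_q$ for $q \ne p$, while $L'_p = L_p + \Z_p\mathbf t' \supsetneq L_p$ because $\mathbf t \ne 0$ in $\mathcal D$. Integrality of $L'_p$ follows from $Q(\mathbf t') \in \Z_p$ (isotropy) together with $B(\mathbf t', L_p) \subseteq \Z_p$ (as $\mathbf t' \in \dual{L_p}$), which give $Q(L'_p) \subseteq \Z_p$ and hence $B(L'_p, L'_p) \subseteq \Z_p$; and $L'$ is again $p$-saturated since $\dual{L'_p}/L'_p$ is a subgroup of the quotient $\dual{L_p}/L'_p$ of $\mathcal D$, hence killed by $p$. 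Combined with criterion (ii), the inclusion $L_p \subseteq L'_p$ is strict exactly when $L_p$ is not maximal, which is the assertion of the lemma.

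The main obstacle I anticipate is not in any single branch but in cleanly setting up the dictionary in (i)–(ii): that integral $\Z_p$-overlattices of a $p$-saturated lattice correspond bijectively to totally isotropic subgroups of $\mathcal D$, and in particular that one isotropic vector already suffices to enlarge $L_p$ (which is exactly where $p$ odd enters, via $B = 2Q$ on $\mathcal D$). Once this is in place, the $\F_p$-form classification and the bookkeeping with the preimage $\mathbf t'$ are routine.
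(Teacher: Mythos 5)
Your proposal is correct and follows essentially the same route as the paper's proof: identify $\mathcal D$ as an $\F_p$-quadratic space of dimension $-\nu_p(D)$, characterize maximality by anisotropy, and treat the three dimension branches via the classification of unary, binary, and ternary forms over $\F_p$. Your write-up is in fact somewhat more complete than the paper's (e.g.\ the explicit choice of preimage $\mathbf t'$ killing the components at $q\ne p$, and the universality of anisotropic binary forms in the ternary case), but the underlying argument is the same.
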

\begin{proof}
    Let $\mathcal D$ be the $p$-discriminant group of $L$ written in normal form, and let $D=\det(\mathcal{D})$. Because $L$ is $p$-saturated, $\mathcal D$ is an $\F_p$-vector space of dimension $-\nu_p(D)$. The lattice $L$ is maximal at $p$ iff $\mathcal D$ has no nonzero isotropic vectors.

    If $-\nu_p(D)\le 1$ then $\mathcal D$ has dimension at most $1$ and therefore contains no isotropic vectors, so $L$ is already maximal.

    If $-\nu_p(D)=2$, let $\mathbf a,\mathbf b$ be the basis elements. Writing $Q(\mathbf a)=a/p$ and $Q(\mathbf b)=b/p$ with $p\nmid a,b$, one checks that $\mathcal D$ contains a nontrivial isotropic vector iff $-b/a$ is a quadratic residue modulo $p$. If it is not, $L$ is maximal; otherwise choosing $x$ with $x^2\equiv -b/a\pmod p$ and setting $\mathbf t=x\mathbf a+\mathbf b$ yields an isotropic vector whose preimage $\mathbf t'$ produces a nontrivial $p$-overlattice $L+\langle\mathbf t'\rangle$.

    The case $-\nu_p(D)\ge 3$ is analogous: either $-b/a$ is a square and we proceed as in the previous case, or one finds $x,y$ with $ax^2+by^2+c\equiv 0\pmod p$ and sets $\mathbf t=x\mathbf a+y\mathbf b+\mathbf c$. In every case $Q(\mathbf t)$ is $\Z_p$-integral, so $L+\langle\mathbf t'\rangle$ is an integral overlattice at $p$ and strictly larger than $L$ when an isotropic vector exists. Thus the algorithm produces the desired overlattice.
\end{proof}

\begin{lem} \label{lem:Zp_overlattice_time}
    Let $\delta\in(0, 1)$. Algorithm~\ref{alg:Zp_overlattice} has time complexity
    \[
      O\!\big(n^{\psi} M(\nu_p(D)\log p)+\log p \log(1/\delta)\;M(\log p)\big)
    \]
    with probability of failure at most $\delta$.
\end{lem}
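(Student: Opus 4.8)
The plan is to go through Algorithm~\ref{alg:Zp_overlattice} line by line and bound the cost of each step in terms of the number of bits $\nu_p(D)\log p$ of the modulus we are working with, together with the $n\times n$ matrix operations needed to pass between the lattice $L$ and its $p$-discriminant group. First I would observe that the $p$-discriminant group $\mathcal D=D_p(L)$ is an $\F_p$-vector space of dimension $-\nu_p(D)$ (this is exactly the content of the proof of Lemma~\ref{lem:Zp_overlattice_proof}, using that $L$ is $p$-saturated), and that computing $\mathcal D$, the quadratic form $Q$ on it, and a basis $B$ in normal form amounts to a bounded number of multiplications of $n\times n$ matrices whose entries are integers modulo a power of $p$ of size $O(\nu_p(D)\log p)$ bits. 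This accounts for the $O(n^{\psi}M(\nu_p(D)\log p))$ term: there are $O(1)$ such matrix products, each costing $n^{\psi}$ ring multiplications, and each ring multiplication costs $M(\nu_p(D)\log p)$.

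Next I would handle the case analysis on $-\nu_p(D)$. If $-\nu_p(D)\le 1$ we return immediately, so there is nothing to bound. If $-\nu_p(D)\ge 2$, the arithmetic on the scalars $a,b$ (and $c$) is done modulo $p$, hence costs $O(M(\log p))$ per operation, which is dominated. The genuinely randomized ingredients are: testing whether $-b/a$ is a quadratic residue mod $p$ and, when it is, extracting a square root $x$ with $x^2\equiv -b/a\pmod p$; and, in the case $-\nu_p(D)\ge 3$ when $-b/a$ is a non-residue, solving $ax^2+by^2+c\equiv 0\pmod p$. A Legendre-symbol computation costs $O(\log p\,M(\log p))$ deterministically, and a randomized square-root algorithm (Tonelli--Shanks, or Cipolla) runs in $O(\log p)$ field operations after an expected $O(1)$ trials to find a non-residue — but here, as in Lemma~\ref{lem:nonQuadraticResidue}, we assume a non-residue mod $p$ has been precomputed, so the only randomness that contributes a failure probability is the repetition needed to boost success; running $O(\log(1/\delta))$ independent attempts gives failure probability at most $\delta$ and total cost $O(\log p\,\log(1/\delta)\,M(\log p))$. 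Solving the conic $ax^2+by^2+c\equiv 0$ reduces to one more square-root extraction after a search over $O(1)$ values (or directly, since exactly half of $\F_p$ is hit), so it fits in the same bound.

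Then I would bound the final two lines: forming $\mathbf t$ as a small $\F_p$-linear combination of basis vectors of $\mathcal D$ costs $O(n)$ operations mod $p$, lifting $\mathbf t$ to a preimage $\mathbf t'$ in $L^\vee$ is again one $n\times n$ solve over $\Z/p^{\nu_p(D)}\Z$ (hence $O(n^{\psi}M(\nu_p(D)\log p))$, absorbed into the first term), and adding the rank-one piece $L+\langle\mathbf t'\rangle$ and renormalizing is a Hermite/normal-form computation on an $n\times n$ integer matrix, again within the matrix-multiplication budget. Summing, the dominant contributions are the $O(1)$ matrix operations over $\Z/p^{\nu_p(D)}\Z$, giving $O(n^{\psi}M(\nu_p(D)\log p))$, and the randomized square-root/quadratic-residue work mod $p$, giving $O(\log p\,\log(1/\delta)\,M(\log p))$; together these are exactly the claimed bound, and the only source of failure is the square-root boosting, which is at most $\delta$.

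The main obstacle, I expect, is not any single estimate but pinning down precisely which linear-algebra operations over $\Z/p^{\nu_p(D)}\Z$ are needed to compute $D_p(L)$, its quadratic form, and a normal-form basis, and confirming that each is genuinely a constant number of $n\times n$ multiplications (rather than, say, an iterative Smith-normal-form computation whose bit-complexity could carry an extra $\log$ factor). A secondary subtlety is making sure the square-root extraction modulo $p$ is correctly accounted for: without the precomputed non-residue it would be a Las Vegas algorithm with expected $O(1)$ restarts, and one must be careful that folding this into the $\delta$-failure framework, together with the $O(\log(1/\delta))$ amplification, does not hide an extra factor — but since the per-trial cost is $O(\log p\,M(\log p))$ and we take $O(\log(1/\delta))$ trials, the stated bound is what comes out.
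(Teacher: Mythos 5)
Your proposal matches the paper's proof: the paper likewise charges $O(n^{\psi}M(\nu_p(D)\log p))$ for computing the discriminant group and its normal form, and $O(\log p\,\log(1/\delta)\,M(\log p))$ for the randomized square-root extraction amplified to failure probability $\delta$, with all other steps absorbed. Your version is simply a more detailed account of the same decomposition.
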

\begin{proof}
    Computing $\mathcal D$ costs $O(n^{\psi}M(\nu_p(D)\log p))$.
    Computing square roots modulo $p$ costs $O(\log p)$ field operations; each field operation has bit-cost $M(\log p)$. Repeating the attempt $O(\log(1/\delta))$ times reduces the failure probability to at most $\delta$, so the cost of root-finding contributes $O(\log p \log(1/\delta)M(\log p))$. Combining terms yields the stated bound.
\end{proof}

\SetKwFunction{OPMaximal}{OP\allowbreak-maximal}

This allows us to introduce a simple algorithm \OPMaximal to compute an overlattice of $L$ which is $p$-maximal. First, $p$-saturate the lattice and then apply Algorithm~\ref{alg:Zp_overlattice} repeatedly.
The process terminates since the determinants decrease.

\begin{thm} \label{thm:Zp_maximal_time}
    Let $\delta\in(0, 1)$ and set $\lambda_p=\min(\nu_p(D), n)$. There exists an algorithm to find a $p$-maximal overlattice with running time
    \[
      O\left(\left(n^\psi+n\log \nu_p(D)  \right)M(\nu_p(D)\log p)+
      \lambda_p n^\psi M(\lambda_p \log p) + c_p M(\log p) \right),
    \]
    and with failure probability at most $\delta$, where $c_p = \lambda_p  \log p  \log(1/\delta)$.
\end{thm}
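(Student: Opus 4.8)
The plan is to split the running time of \OPMaximal into three contributions: the single $p$-saturation performed at the start, the repeated calls to Algorithm~\ref{alg:Zp_overlattice}, and the randomized number-theoretic subroutines. The heart of the argument is a bound of $O(\lambda_p)$ on the number of iterations.

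First I would account for the $p$-saturation of the input lattice $L$. This step computes the structure of $D_p(L)$ and adjoins the part of it that is not killed by $p$; exactly as in the proof of Lemma~\ref{lem:Zp_overlattice_time}, forming the relevant normal form of an integer matrix whose $p$-primary entries have $O(\nu_p(D)\log p)$ bits costs $O(n^{\psi}M(\nu_p(D)\log p))$, and the $O(n)$ elementary eliminations and preimage computations modulo powers of $p$ up to $p^{\nu_p(D)}$ (each modular exponentiation carrying a repeated-squaring factor $\log\nu_p(D)$) contribute the extra $O(n\log\nu_p(D)\,M(\nu_p(D)\log p))$. This cost is incurred once and gives the first term of the claimed bound.

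Next I would bound the iteration count. Let $L_0$ be the $p$-saturation of $L$ and let $L_0,L_1,\dots$ be the successive lattices output by repeated application of Algorithm~\ref{alg:Zp_overlattice}. Each $L_i$ is again $p$-saturated (this is part of the specification of Algorithm~\ref{alg:Zp_overlattice}), so $D_p(L_i)$ is an $\F_p$-vector space and $\nu_p(\det L_i)=\dim_{\F_p}D_p(L_i)$. For $i=0$ this dimension is at most $n$ (the discriminant group is generated by at most $n$ elements) and at most $\nu_p(D)$, since saturation enlarges $L$ and hence $[\dual{L_0}:L_0]=[\dual{L}:L]/[L_0:L]^2\le[\dual{L}:L]$; thus $\dim D_p(L_0)\le\lambda_p$. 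Whenever Algorithm~\ref{alg:Zp_overlattice} does not simply return its input, it adjoins one vector $\mathbf t'$ with $p\mathbf t'\in L_i$ and $\mathbf t'\notin L_i$, so $[L_{i+1}:L_i]=p$ and $\nu_p(\det L_{i+1})=\nu_p(\det L_i)-2$; this is precisely what Lemma~\ref{lem:Zp_overlattice_proof} establishes. Hence the loop halts after $O(\lambda_p)$ iterations and every lattice it handles has $\nu_p(\det)\le\lambda_p$. Combining this with Lemma~\ref{lem:Zp_overlattice_time} (with its ``$\nu_p(D)$'' now bounded by $\lambda_p$), each iteration costs $O(n^{\psi}M(\lambda_p\log p))$ plus $O(\log p)$ field operations for the square-root and conic-point searches, which sum to $O(\lambda_p n^{\psi}M(\lambda_p\log p)+\lambda_p\log p\,M(\log p))$ — the second term together with part of the third.

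Finally, for the randomness I would invoke Lemma~\ref{lem:nonQuadraticResidue} exactly once, before the loop, to fix a quadratic non-residue modulo $p$ with failure probability at most $\delta$, at a one-time cost of $O(\log(1/\delta)\log p\,M(\log p))$ bit operations. Given such a non-residue, Tonelli--Shanks and the search for a solution of $ax^{2}+by^{2}+c\equiv 0\pmod p$ become deterministic, so this is the only randomized step and the overall failure probability is at most $\delta$; adding its cost to the $O(\lambda_p\log p\,M(\log p))$ from the per-iteration root-finding yields $O(c_p M(\log p))$ with $c_p=\lambda_p\log p\log(1/\delta)$, as claimed. I expect the main obstacle to be the bookkeeping in the iteration bound: one must verify both that the starting discriminant-group dimension is $\le\lambda_p$ rather than merely $\le\nu_p(D)$, and that every nontrivial step of Algorithm~\ref{alg:Zp_overlattice} really shrinks that dimension by a full two — equivalently, that $L_{i+1}/L_i$ is cyclic of order $p$ with $L_{i+1}$ still integral — which is where Lemma~\ref{lem:Zp_overlattice_proof} does the real work.
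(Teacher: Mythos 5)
Your proof is correct and follows essentially the same route as the paper: a one-time saturation cost, an $O(\lambda_p)$ bound on the number of calls to Algorithm~\ref{alg:Zp_overlattice} coming from the fact that after saturation the $p$-adic determinant exponent is at most $\lambda_p$ and strictly decreases each nontrivial iteration, and a per-iteration cost taken from Lemma~\ref{lem:Zp_overlattice_time} with $\nu_p(D)$ replaced by $\lambda_p$. Your handling of the randomness (a single up-front non-residue computation with failure budget $\delta$, rather than repeating the root-finding in every iteration) is a slightly cleaner bookkeeping of the failure probability than the paper's, but it yields the same bound $c_pM(\log p)$ and does not change the structure of the argument.
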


\begin{proof}
    Saturation costs $O((n^\psi+n\log \nu_p(D) )M(\nu_p(D)\log p))$. After saturation the $p$-adic determinant exponent is at most $\lambda_p$, and each application of \OPOverlattice reduces that exponent by at least $1$, so the while-loop iterates at most $\lambda_p$ times.

    By Lemma~\ref{lem:Zp_overlattice_time}, one iteration costs 
    \[
      O\!\big(n^{\psi} M(\lambda_p\log p)+ \log p \log(1/\delta)M(\log p)\big).
    \]
    Multiplying this by $\lambda_p$ and adding the saturation cost yields the result.
\end{proof}

Algorithm~\ref{alg:Z2_maximal} implements \texttt{2-maximal($L$)}, where $L$ is a $\Z$-lattice. It constructs a $\Z$-lattice $L'$ for which $L'_2$ is $\Z_2$-maximal and $L'_p=L_p$ for all odd primes $p$.

\RestyleAlgo{ruled}
\begin{algorithm}[tbp]
    \caption{\texttt{2-maximal($L$)}.}\label{alg:maximal overlattice p=2}
    \label{alg:Z2_maximal}
    \KwData{A lattice $L$}
    $B\gets$ a basis of $\mathcal D = D_2(L)$;
    $Q\gets$ quadratic form on $\mathcal D$;
    $B'\gets \emptyset$ \;
    \For{generator $g$ in $B$}
    {
        $e\gets$ the additive order of $g$\;
        $\delta\gets$ the remainder when $Q(g)e$ is divided by $2$\;
        Add $2^{\left\lfloor\frac{\nu_2(e)+1}{2}\right\rfloor+\delta} g$ to $B'$\;
    }
    $L'\gets$ the lattice with basis $B'$;
    $I\gets$ isotropic subspace of $L'$\;
    $L'\gets L'+I$;
    $\mathcal D'\gets D_2(L')$\;
    \While{$L'$ is not maximal}
    {
        \eIf{$Q(v)\neq 0$ for all $v \in \mathcal{D}'$}
        {
            \Return{$L'$}\;
        }
        {
            $v'\gets$ a preimage of $v$ with $Q(v) = 0$ in $L'^\vee$\;
            $L'\gets L'+\langle v'\rangle$;
            $\mathcal D'\gets D_2(L')$\;
        }
    }
    \Return{$L'$}\;
\end{algorithm}

\begin{thm} \label{thm:Z2_maximal_proof}
    Algorithm~\ref{alg:Z2_maximal} produces the correct output.
\end{thm}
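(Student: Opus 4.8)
The claim splits into two parts — that $L'_p = L_p$ for every odd prime $p$, and that $L'_2$ is $\Z_2$-maximal — and I would treat them in turn. The first is routine. Every vector that Algorithm~\ref{alg:Z2_maximal} ever adjoins to the current lattice — each scaled generator $2^{\lfloor(\nu_2(e)+1)/2\rfloor+\delta}g$ placed into $B'$, each generator of the isotropic subspace $I$, and each preimage $v'$ chosen inside the \textbf{while}-loop — is a preimage in the dual lattice of an element of a $2$-primary discriminant group $D_2(\cdot)$; if that element has order $2^{m}$, the preimage lies in $2^{-m}L \cap L^{\vee}$ (with $L'$ in place of $L$ at the later steps). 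Since $2$ is a unit in $\Z_q$ for odd $q$ we have $2^{-m}L_q = L_q$, so adjoining such a vector leaves the localization at $q$ unchanged; as only finitely many vectors are added, an easy induction on these steps gives $L'_q = L_q$ for all odd primes $q$.

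For $\Z_2$-maximality I would first recall the structural fact underlying the whole construction, already invoked at odd primes in the proof of Lemma~\ref{lem:Zp_overlattice_proof}: even overlattices of a $\Z_2$-lattice $M$ on a fixed quadratic space correspond bijectively to totally isotropic subgroups of the discriminant quadratic group $(D_2(M), q)$, with $q$ valued in $\Q_2/2\Z_2$; consequently $M$ is $\Z_2$-maximal (all lattices occurring here being even) exactly when $D_2(M)$ has no nonzero isotropic vector. Granting this, the \textbf{while}-loop is transparently correct: on each pass it either certifies that $\mathcal D' = D_2(L')$ has no nonzero isotropic vector, whereupon $L'_2$ is maximal, or it replaces $L'$ by the overlattice attached to the totally isotropic cyclic group $\langle v\rangle$, which keeps $L'$ even and strictly decreases $|D_2(L')|$; hence the loop terminates with $L'_2$ maximal, \emph{provided} the lattice entering it is already an even overlattice of $L$ at $2$.

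To see that it is, I would analyze the \textbf{for}-loop block by block in the normal form of \cite{MirMor2009}*{Def.\ 4.6}: $\mathcal D = D_2(L)$ decomposes as an orthogonal direct sum of cyclic and rank-$2$ pieces attached to the normal-form generators $g$. For $g$ of additive order $e = 2^{\nu_2(e)}$ one has $q(2^{j}g) = 2^{2j}q(g)$, and a direct computation in each block shows that $j = \lfloor(\nu_2(e)+1)/2\rfloor$ already makes $2^{j}g$ isotropic for the $\Q_2/\Z_2$-valued bilinear form, while the correction term $\delta = (Q(g)e \bmod 2) \in \{0,1\}$ is exactly what is needed to upgrade this to isotropy for the $\Q_2/2\Z_2$-valued quadratic form $q$. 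Because distinct normal-form blocks are mutually orthogonal, the subgroup $\langle B'\rangle$ they generate is totally isotropic, so the lattice built from $B'$ is an even overlattice of $L$ at $2$; adjoining a maximal totally isotropic subspace $I$ of the (much smaller) residual discriminant form keeps this property, and the \textbf{while}-loop then finishes the job.

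The step I expect to be the genuine obstacle is precisely this block-by-block verification: that the exponent $\lfloor(\nu_2(e)+1)/2\rfloor + \delta$ is exactly right — small enough that $2^{j}g$ is $q$-isotropic in every $2$-adic Jordan constituent, which is where the even-versus-integral subtlety at $p = 2$, i.e.\ the gap between $\Q_2/\Z_2$- and $\Q_2/2\Z_2$-isotropy, must be absorbed by $\delta$ (including in the rank-$2$ constituents), yet not so large that $\langle B'\rangle$ together with the subsequent maximal isotropic subspace fails to be maximal among totally isotropic subgroups, in which case the algorithm would return a non-maximal lattice. Carrying this out amounts to enumerating the possible $2$-adic Jordan blocks and checking isotropy and maximality in each; by contrast, the odd-prime invariance and the termination of the loop are routine.
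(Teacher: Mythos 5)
Your plan matches the paper's (very terse) proof — integrality of each adjoined overlattice plus termination of the \textbf{while}-loop at a lattice whose discriminant form has no nonzero isotropic vectors — and the block-by-block check you defer (that the exponent $\lfloor(\nu_2(e)+1)/2\rfloor+\delta$ makes each scaled generator isotropic for the $\Q_2/2\Z_2$-valued form, including the cross terms in the rank-$2$ constituents) is a routine finite computation that does go through. One remark: the second half of your flagged ``obstacle'' is not actually an obstacle, since even if the \textbf{for}-loop over-scaled and $\langle B'\rangle+I$ were far from a maximal totally isotropic subgroup, the \textbf{while}-loop's exit condition by itself guarantees maximality of the output; only the evenness/integrality direction of your verification matters for correctness, the other direction affecting only the running time.
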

\begin{proof}
    The lattice with basis $B'$ is an overlattice of $L$ and ensures that the exponent of the discriminant is at most $4$. Adding an isotropic subspace $I$ to $L'$ also yields an integral overlattice, and is used to accelerate the computation.

    The final while-loop searches for nonzero isotropic vectors in the current discriminant group $\mathcal D'$; when such a vector is found its preimage is adjoined to the lattice, producing a strictly larger overlattice. As in the odd prime case, this process strictly decreases the relevant 2-adic exponent and therefore must terminate at a $\Z_2$-maximal lattice.
\end{proof}

\begin{thm} \label{thm:Z2_maximal_time}
    Let $\lambda_2=\min(\nu_2(D), 4n)$. Then Algorithm~\ref{alg:Z2_maximal} has time complexity
    \[
      O\!\Big((n^\psi+n\log \nu_2(D) )M(\nu_2(D))+\lambda_2 n^\psi M(\lambda_2)+2^{\lambda_2}\Big).
    \]
\end{thm}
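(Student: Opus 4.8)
The plan is to trace through Algorithm~\ref{alg:Z2_maximal} line by line, bounding the cost of each block in terms of bit operations, exactly as we did for the odd-prime case in Theorem~\ref{thm:Zp_maximal_time}. The three summands in the claimed bound correspond, respectively, to (a) computing the discriminant group $\mathcal D = D_2(L)$ and its quadratic form, (b) the while-loop that repeatedly adjoins isotropic preimages, and (c) the initial search for an isotropic subspace $I$ of the truncated lattice $L'$ together with the isotropy tests inside the loop. The key structural input is the observation, already recorded in the proof of Theorem~\ref{thm:Z2_maximal_proof}, that after forming the basis $B'$ the $2$-adic discriminant exponent is at most $4n$, so that $\lambda_2 = \min(\nu_2(D), 4n)$ governs everything downstream of the first step.

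First I would handle the discriminant computation. Computing $D_2(L) = \dual{L_2}/L_2$ and a normal-form basis requires inverting the Gram matrix over $\Z/2^{\nu_2(D)}\Z$ and reducing, which by the same analysis underlying Lemma~\ref{lem:Zp_overlattice_time} costs $O\big((n^\psi + n\log\nu_2(D))\,M(\nu_2(D))\big)$ — the $n^\psi$ from matrix arithmetic, the $n\log\nu_2(D)$ from the normal-form reduction, and each ring operation carrying bit-cost $M(\nu_2(D))$ since the modulus has $O(\nu_2(D))$ bits. Producing $B'$ from $B$ in the \textbf{for}-loop only costs $O(n)$ scalar operations on top of this and is absorbed. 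After this step, every subsequent quantity lives in a group of exponent $\le 4n$, so all arithmetic uses integers of $O(\lambda_2)$ bits, explaining why $M(\nu_2(D))$ is replaced by $M(\lambda_2)$ from here on.

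Next I would bound the while-loop. By the termination argument in Theorem~\ref{thm:Z2_maximal_proof}, each pass strictly decreases the $2$-adic exponent, so there are at most $\lambda_2$ iterations. Each iteration recomputes $\mathcal D' = D_2(L')$ after adjoining one vector; doing this incrementally (or from scratch) costs $O(n^\psi M(\lambda_2))$ by the same reasoning as the discriminant step, now with modulus of $O(\lambda_2)$ bits. Multiplying by $\lambda_2$ iterations gives the middle term $\lambda_2 n^\psi M(\lambda_2)$. The remaining contribution is the isotropy testing: deciding whether $Q(v) = 0$ for all $v \in \mathcal D'$ and, in the initial step, extracting a maximal isotropic subspace $I$. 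Over the finite $2$-group $\mathcal D'$, which has at most $\lambda_2$ elementary divisors, the natural approach is a Gram–Schmidt–type diagonalization of the quadratic form followed by inspection; in the worst case one may enumerate the $2^{\lambda_2}$ elements of the relevant $\F_2$-quotient, which accounts for the additive $2^{\lambda_2}$ term. This is also the step I expect to be the main obstacle: the exponential term forces one to argue carefully that $2^{\lambda_2}$, rather than a larger quantity like $2^{\nu_2(D)}$, suffices — this relies on the exponent having already been cut to $\le 4n$ before any isotropy search is performed, which is precisely why the truncation to $B'$ and the preliminary $+I$ step occur before the main loop. Collecting the three contributions and using that the $n$-dependent cost of the first step dominates the $n\log\nu_2(D)$ term of each later recomputation yields the stated complexity.
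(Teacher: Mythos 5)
Your proposal is correct and follows essentially the same decomposition as the paper's proof: the initial discriminant-group computation gives the $(n^\psi+n\log\nu_2(D))M(\nu_2(D))$ term, the at-most-$\lambda_2$ iterations of the while-loop each recomputing $D_2(L')$ give $\lambda_2 n^\psi M(\lambda_2)$, and the exhaustive isotropy scan over a group of cardinality at most $\min(2^{\nu_2(D)},2^{4n})$ gives $2^{\lambda_2}$. The key observation you highlight — that the truncation to $B'$ caps the discriminant exponent before any enumeration occurs — is exactly the point the paper relies on.
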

\begin{proof}
    Computing the initial discriminant group costs 
    \[
      O((n^\psi+n\log \nu_2(D) )M(\nu_2(D))).
    \]
    The cardinality of $\mathcal D'$ is bounded by both $2^{\nu_2(D)}$ and $2^{4n}$, so scanning all vectors of $\mathcal D'$ to find an isotropic one takes $O(2^{\lambda_2})$ time. Each iteration of the outer while-loop recalculates the $2$-discriminant group (costing $O(n^\psi M(\lambda_2))$) and there are at most $O(\lambda_2)$ iterations. Combining these costs yields the stated bound.
\end{proof}

\SetKwFunction{MaximalOverlattice}{Maximal\allowbreak-Overlattice}
\SetKwFunction{TwoMaximal}{2\allowbreak-maximal}

\RestyleAlgo{ruled}
\begin{algorithm}[tbp]
    \caption{\protect\MaximalOverlattice{$L$}; Constructs an integral maximal overlattice of a lattice $L$.}
    \label{alg:global_maximal}
    \KwData{A lattice $L$}
    $L'\gets L$\;
    \If{$2\mid \det(L)$}
    {
        $L'\gets$ \TwoMaximal{$L'$}\;
    }
    \For{$2 \ne p \mid \det(L)$}
    {
        $L'\gets$ \OPMaximal{$L', p$}\;
    }
    \Return{$L'$}\;
    
\end{algorithm}

\begin{thm} \label{thm:global_maximal_time}
    Let $\delta\in(0, 1)$. Set $\lambda_2=\min(\nu_2(D), 4n)$ and $\lambda_p=\min(\nu_p(D), n)$ for odd primes $p$. Then Algorithm~\ref{alg:global_maximal} has time complexity 
    $$
        O\left(\sum_{ p\mid D}\left(n^\psi+n\log \nu_p(D) \right)M(\nu_p(D)\log p)+
        \lambda_p n^{\psi} M(\lambda_p \log p) + c_p \right)
    $$
    with probability of failure at most $\delta$, where
    $$
    c_p = \begin{cases}
        \lambda_p \log p \log \left( \frac{\omega(D)}{\delta} \right) M(\log p) & p \ne 2, \\
        2^{\lambda_2} & p = 2.
    \end{cases}
    $$
\end{thm}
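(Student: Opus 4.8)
The plan is to observe that Algorithm~\ref{alg:global_maximal} treats each prime $p\mid D$ exactly once: when $2\mid D$ the prime $2$ is handled by a single call to \texttt{2-maximal} (Algorithm~\ref{alg:Z2_maximal}), and each odd prime $p\mid D$ is handled by a single call to \texttt{OP-maximal}, whose cost is bounded in Theorem~\ref{thm:Zp_maximal_time}. Since taking an overlattice preserves the rank, $n$ is unchanged throughout. Moreover, Algorithm~\ref{alg:Z2_maximal} returns a lattice with $L'_q=L_q$ at every odd prime $q$, and \texttt{OP-maximal} at a prime $q$ returns a lattice with $L'_r=L_r$ at every prime $r\ne q$; consequently no step alters the $p$-adic completion at any prime other than the one it is currently processing, so the input to the call handling $p$ still has $p$-adic determinant exponent equal to $\nu_p(D)$. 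Thus the cost and, where relevant, the failure probability of the $p$-th call are governed directly by Theorem~\ref{thm:Z2_maximal_time} (for $p=2$) and Theorem~\ref{thm:Zp_maximal_time} (for odd $p$) with parameter $\nu_p(D)$, exactly as in the statement.

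For the prime $2$, Theorem~\ref{thm:Z2_maximal_time} gives the deterministic bound $O\big((n^\psi+n\log\nu_2(D))M(\nu_2(D))+\lambda_2 n^\psi M(\lambda_2)+2^{\lambda_2}\big)$; since $\log 2=O(1)$, this is precisely the $p=2$ summand of the claimed bound, with $c_2=2^{\lambda_2}$. For the odd primes, to guarantee that Algorithm~\ref{alg:global_maximal} as a whole succeeds with probability at least $1-\delta$ it suffices, by a union bound over the at most $\omega(D)$ odd-prime calls, to run each one with failure probability at most $\delta/\omega(D)$. Substituting $\delta\mapsto\delta/\omega(D)$ into Theorem~\ref{thm:Zp_maximal_time} replaces its $c_p=\lambda_p\log p\log(1/\delta)$ by $\lambda_p\log p\log(\omega(D)/\delta)$, so the $p$-th call costs $O\big((n^\psi+n\log\nu_p(D))M(\nu_p(D)\log p)+\lambda_p n^\psi M(\lambda_p\log p)+\lambda_p\log p\log(\omega(D)/\delta)M(\log p)\big)$, matching the odd-$p$ summand of the claim.

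Summing the per-prime bounds over all $p\mid D$ yields the stated running time, and the union bound gives total failure probability at most $\omega(D)\cdot\delta/\omega(D)=\delta$ (the $p=2$ step contributing nothing, as it is deterministic). The only step requiring genuine care is this allocation of the failure budget among the randomized odd-prime calls — which is exactly what produces the $\log(\omega(D)/\delta)$ in place of $\log(1/\delta)$ — while the remainder is routine bookkeeping, adding up the costs supplied by Theorems~\ref{thm:Z2_maximal_time} and~\ref{thm:Zp_maximal_time}; I therefore anticipate no serious obstacle.
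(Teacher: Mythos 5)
Your proposal is correct and follows the same route as the paper's proof: apply Theorem~\ref{thm:Z2_maximal_time} at $p=2$ and Theorem~\ref{thm:Zp_maximal_time} at each odd prime, allocate a failure budget of $\delta/\omega(D)$ to each randomized odd-prime call (which turns $\log(1/\delta)$ into $\log(\omega(D)/\delta)$), and sum over $p\mid D$. Your additional observation that each call leaves the completions at the other primes untouched, so the parameter $\nu_p(D)$ is still the right one, is a point the paper leaves implicit.
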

\begin{proof}
    Apply Theorems~\ref{thm:Zp_maximal_time} and~\ref{thm:Z2_maximal_time} to each prime dividing $D$. Assigning a failure probability of at most $\delta/\omega(D)$ to each odd prime and summing the costs yields the stated global bound.
\end{proof}

\begin{cor}
    Let $\delta\in(0, 1)$. Assuming that arithmetic operations take constant time, i.e., $M(x)=1$ for all $x$, then Algorithm~\ref{alg:global_maximal} has time complexity
    \[O\left(2^{4n}+n^{\psi+1}\log D + n \log D \log \log D + n \log D \log(1/\delta)\right)\]
    with probability of failure $\delta$.
\end{cor}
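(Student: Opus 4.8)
The plan is to specialize Theorem~\ref{thm:global_maximal_time} to the unit-cost model $M(x)=1$ and then collapse the resulting sum over $p\mid D$ using elementary estimates for the arithmetic functions involved. The failure probability needs no separate argument: it is inherited verbatim from Theorem~\ref{thm:global_maximal_time}, since Algorithm~\ref{alg:global_maximal} introduces no randomness beyond what that theorem already accounts for.

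Setting $M\equiv 1$ in Theorem~\ref{thm:global_maximal_time}, the running time is bounded by a constant times
\[
\sum_{p\mid D}\bigl(n^\psi+n\log\nu_p(D)\bigr)\;+\;n^\psi\sum_{p\mid D}\lambda_p\;+\;\sum_{p\mid D}c_p .
\]
For the first two sums I would use the crude place count $\omega(D)=O(\log D)$, the identity $\sum_{p\mid D}\nu_p(D)\log p=\log D$ (which gives $\sum_{p\mid D}\log\nu_p(D)=O(\log D)$), and the bounds $\lambda_p\le n$ for odd $p$ and $\lambda_2\le 4n$, so that $\sum_{p\mid D}\lambda_p=O(n\,\omega(D))=O(n\log D)$. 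Thus the first sum is $O(n^\psi\log D)$ and the second is $O(n^{\psi+1}\log D)$, and since $n\le n^{\psi+1}$ both are absorbed into the $n^{\psi+1}\log D$ term of the claimed bound.

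It then remains to treat $\sum_{p\mid D}c_p$. The prime $p=2$ contributes $c_2=2^{\lambda_2}\le 2^{4n}$. For an odd prime $p\mid D$ one has $c_p=\lambda_p\log p\,\log(\omega(D)/\delta)$, and bounding $\lambda_p\le n$ and $\sum_{p\mid D}\log p\le\log\operatorname{rad}(D)\le\log D$ yields
\[
\sum_{2\neq p\mid D}c_p\;\le\;n\log D\,\log\!\bigl(\tfrac{\omega(D)}{\delta}\bigr)\;=\;n\log D\bigl(\log\omega(D)+\log(1/\delta)\bigr),
\]
which is $O\!\bigl(n\log D\log\log D+n\log D\log(1/\delta)\bigr)$ because $\log\omega(D)=O(\log\log D)$. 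Adding the four contributions $2^{4n}$, $n^{\psi+1}\log D$, $n\log D\log\log D$, and $n\log D\log(1/\delta)$ gives the stated complexity.

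I do not expect a genuine obstacle; the argument is bookkeeping on top of Theorem~\ref{thm:global_maximal_time}. The only point needing a little care is applying the two separate conventions for $\lambda_p$ (the $p=2$ case versus odd $p$) consistently, and deliberately replacing the sharper estimate $\sum_{p\mid D}\nu_p(D)\log p=\log D$ by the looser $\lambda_p\le n$ in the $\lambda_p\log p$ factor so that the explicit dependence on $n$ becomes visible in the final bound.
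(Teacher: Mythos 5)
Your proposal is correct and follows essentially the same route as the paper: specialize Theorem~\ref{thm:global_maximal_time} to $M\equiv 1$, use $\lambda_2\le 4n$, $\lambda_p\le n$, $\omega(D)=O(\log D)$, $\sum_{p\mid D}\log p\le\log D$, and $\log\omega(D)=O(\log\log D)$, then sum over $p\mid D$. The only cosmetic difference is that you bound the three sums globally while the paper bounds the per-prime contribution first and then sums, which is the same bookkeeping.
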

\begin{proof}
    We build off of the previous theorem. Since $M(x)=1$, $\lambda_2\le 4n$, and  
    $\nu_2(D) = O(\log D)$, we get at $p = 2$
    \[
    (n^\psi+n\log \nu_2(D) )M(\nu_2(D))+\lambda_2 n^\psi M(\lambda_2)+2^{\lambda_2} = O(n\log \log D +2^{4n}).
    \]
    
    Since $\lambda_p\le n$, 
    $\nu_p(D) = O(\log D)$, and $\omega(D) = O(\log D)$, we get for $p \ne 2$
    \begin{align*}
    \left(n^\psi+n\log \nu_p(D) \right)&M(\nu_p(D)\log p) +
    \lambda_p n^\psi M(\lambda_p \log p)
    + c_p \\
    &= O\left(n^\psi+n\log \log D +n(n^\psi+ \log p\log(\log D /\delta))\right) \\
    &= O\left(n^{\psi+1}+n \log p \log \log D + n \log p \log(1/\delta) \right).
    \end{align*}
    Summing over all $p \mid D$ yields the desired result.
\end{proof}

\subsection{Hanke algorithm}
In this section we describe the running-time analysis of the algorithm described in \cite{Hanke} for finding a maximal overlattice.

Throughout this section let $Q$ be a rational quadratic form on an $n$-dimensional vector space $V$ with (absolute) determinant $D$. Fix a $\Z$-basis of $V$ and express all Gram matrices with respect to that basis; thus a lattice $L\subset V$ is represented by an $n\times n$ matrix of coordinates. 
For a Gram matrix $G\in\Q^{n\times n}$ clear denominators by letting $d=\operatorname{lcm}$ of the denominators of entries of $G$ and set $A=dG\in\Z^{n\times n}$. 
\begin{lem}\label{lem:satur}
The algorithm \cite{Hanke}*{Algorithm 3.8} takes
$
O \left( n^{\psi} \sum_{p \mid D} \log \nu_p(D) \right).
$
\end{lem}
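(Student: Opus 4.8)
The plan is to recall what \cite{Hanke}*{Algorithm 3.8} actually does and then account for the cost of each step. The algorithm takes the integral matrix $A = dG$ and, prime by prime among the primes $p$ dividing the determinant, computes the $p$-saturation of the lattice spanned by the rows of $A$ inside $V$; equivalently, it computes, for each such $p$, the Smith normal form of $A$ over $\Z_p$ (or, what amounts to the same thing, over $\Z/p^{k}\Z$ for $k$ slightly larger than $\nu_p(D)$) in order to identify the elementary divisors that are divisible by $p$ and to produce a change of basis clearing the corresponding $p$-powers. The key structural fact is that only the primes dividing $D$ can contribute a nontrivial elementary divisor, and for a fixed such prime $p$ the relevant $p$-adic precision is $O(\nu_p(D))$, since no elementary divisor exceeds $p^{\nu_p(D)}$ in $p$-valuation.

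First I would fix a prime $p \mid D$ and bound the cost of the single-prime saturation. Computing a Smith (or Hermite) normal form of an $n \times n$ matrix with entries of bounded $p$-valuation can be done by $O(n^{\psi})$ ring operations in $\Z/p^{\nu_p(D)+O(1)}\Z$ via fast matrix multiplication, together with the pivot-search and back-substitution bookkeeping, which is absorbed into the same bound. Since the statement of the lemma is in the arithmetic model where a ring operation costs $O(1)$ — consistent with how the neighboring results in this section are phrased — one prime contributes $O(n^{\psi})$; but a sharper accounting is available and is what the lemma records. Namely, the number of distinct elementary-divisor "levels" at $p$ that the algorithm must strip off is at most $\log_p p^{\nu_p(D)} = \nu_p(D)$, and in fact the relevant iteration count is $O(\log \nu_p(D))$ once one observes that each pass can at least double the exponent it clears (a standard divide-and-conquer on the $p$-adic precision, as in lifting/Newton-type refinement of the saturation). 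Each such pass is a single $O(n^{\psi})$ matrix operation, giving $O(n^{\psi}\log \nu_p(D))$ for the prime $p$.

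Then I would sum over $p \mid D$. The per-prime bound $O(n^{\psi}\log \nu_p(D))$ sums to $O\!\left(n^{\psi}\sum_{p\mid D}\log \nu_p(D)\right)$, which is exactly the claimed complexity; the initial clearing of denominators to form $A = dG$ and the final reassembly of the saturated Gram matrix are each $O(n^2)$ and are dominated. The main obstacle I anticipate is not the arithmetic but pinning down the correct iteration count: one must verify from the description in \cite{Hanke}*{Algorithm 3.8} that the saturation at $p$ genuinely proceeds by a doubling (or otherwise geometrically shrinking) schedule on the $p$-adic precision, so that the number of $O(n^{\psi})$-cost passes is $O(\log \nu_p(D))$ rather than $O(\nu_p(D))$; if the algorithm as written strips one elementary-divisor level at a time, one instead gets $O(n^{\psi}\nu_p(D))$ per prime, and the lemma's bound should be read as reflecting the batched/normal-form version actually implemented. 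Confirming this is the only nontrivial point; everything else is a routine tally of fast linear algebra over $\Z/p^{k}\Z$.
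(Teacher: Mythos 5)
Your proposal matches the paper's argument: both bound the work as $O(n^{\psi})$ linear algebra per refinement pass (the paper phrases it as a dual-lattice/inversion computation rather than an SNF over $\Z/p^k\Z$, but the cost is the same) times $O(\log \nu_p(D))$ passes per prime, summed over $p \mid D$. The one point you flagged as needing verification --- that the local exponent shrinks geometrically rather than by one level per pass --- is exactly what the paper's proof asserts (``an initial local invariant $m_p$ can be reduced to $1$ in $O(\log m_p)$ steps''), so your reading of the lemma is correct.
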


\begin{proof}
Computing the $\operatorname{lcm}$ of $n^2$ denominators is $O(n^2)$.
Computing the dual lattice $\dual L$ reduces to inverting $A$ over $\Q$, followed by clearing denominators which can be done via Gaussian Elimination and thus takes $O(n^\psi)$ time.

The refinement loop repeatedly modifies the lattice to reduce local invariants $m_{p}$. Each iteration requires one dual-lattice computation. An initial local invariant $m_{p}$ can be reduced to 1 in $O(\log m_{p})$ steps, and summing these valuations over all primes $p \mid D$ yields $O\left(\sum_{p\mid D} \log \nu_p(D)\right)$ iterations overall. The cost per iteration is $O(n^\psi)$ and we multiply this by the number of iterations to get the result.
\end{proof}

\begin{lem} \label{lem:isotropic vector}
Finding an isotropic vector in an $n$-dimensional quadratic space over $\mathbb{F}_p$ by the algorithm \cite{Hanke}*{Algorithm 3.11} requires expected time $O(n^2\log p)$ per prime $p$. Summing over primes $p\mid D$ gives expected time $O(n^2 \Omega(D))$.
\end{lem}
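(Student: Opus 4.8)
The plan is to analyze \cite{Hanke}*{Algorithm 3.11} as a randomized search for a nonzero isotropic vector in a non-degenerate quadratic space $(W, q)$ of dimension $n$ over $\F_p$, and to bound both the number of random trials and the cost per trial. First I would recall that, by the Chevalley--Warning theorem (or the Hasse--Minkowski classification of forms over finite fields), every non-degenerate quadratic form over $\F_p$ in dimension $n \ge 3$ is isotropic, and in dimension $n = 2$ it is isotropic precisely for one of the two discriminant classes; in all cases where an isotropic vector exists, the proportion of anisotropic vectors among all of $\F_p^n$ is bounded away from $1$ by an absolute constant. This is the standard point-counting estimate: the number of solutions to $q(x) = 0$ in $\F_p^n$ is $p^{n-1} + O(p^{n/2})$, so for $p$ large the fraction of isotropic vectors is at least roughly $1/p$-close to $1/p^{0} $... more precisely, the number of \emph{nonzero} isotropic vectors is $\ge p^{n-1}(1 - o(1)) - 1$ when the form is universal, giving a success probability $\gg 1/p$ per uniform random sample in the worst case, and $\Theta(1)$ once $n$ is at least $3$. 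Hence a Las Vegas loop that samples $x \in \F_p^n$ uniformly, tests $q(x) = 0$, and repeats, has expected number of iterations $O(1)$ for $n \ge 3$ and $O(\log p)$ (or $O(p)$, depending on exactly which variant of Algorithm 3.11 is used — I would check whether it first splits off a hyperbolic plane) in the remaining small-dimensional base cases.

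Next I would bound the cost of a single trial. Evaluating the quadratic form $q(x) = x^\intercal G x$ at a given vector $x \in \F_p^n$ is a matrix--vector product followed by a dot product, costing $O(n^2)$ arithmetic operations in $\F_p$, each of which is $O(\log p)$ bit operations (ignoring the $M(\log p)$ refinement since the statement is phrased in the $M(x) = O(1)$-per-arithmetic-op convention used elsewhere here — actually the statement explicitly writes $O(n^2 \log p)$, so one arithmetic operation is being charged $O(\log p)$). Multiplying the per-trial cost $O(n^2 \log p)$ by the $O(1)$ expected number of trials gives expected time $O(n^2 \log p)$ per prime $p$. If Algorithm 3.11 uses a recursive splitting-off of hyperbolic planes, the recursion depth is $O(n)$ with geometrically shrinking dimensions, so the total is still $O(n^2 \log p)$; I would present whichever of these matches the actual algorithm. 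Finally, summing over the primes $p \mid D$ and using that each such prime contributes an expected $O(n^2 \log p)$ with $\sum_{p \mid D} \log p \le \sum_{p \mid D} \nu_p(D) \log p = \log D$, and more crudely $\sum_{p \mid D} \log p = O(\log D) = O(\Omega(D))$ — wait, here one wants the cruder bound: since there are $\omega(D) \le \Omega(D)$ primes and each $\log p$ term can be absorbed, one gets $\sum_{p \mid D} O(n^2 \log p) = O(n^2 \sum_{p\mid D}\log p) = O(n^2 \log D) = O(n^2\Omega(D))$, using $\log D \ge \sum_{p \mid D} \log p$ and $\Omega(D) \ge \log_2 D$ so that $\log D = O(\Omega(D))$.

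The main obstacle I expect is pinning down the worst-case success probability in the genuinely small cases, namely $n = 2$ (and possibly $n = 1$, where no isotropic vector exists and the algorithm must instead report this). For $n = 2$ an isotropic vector exists iff $-\det$ is a square mod $p$, and when it does exist the isotropic vectors form two lines, so a uniform sample hits a nonzero isotropic vector with probability $\Theta(1/p)$ only — which would make the naive expected cost $O(p \cdot n^2 \log p)$, too large. The resolution, which I would make explicit, is that Algorithm 3.11 does not sample blindly in dimension $2$: it directly solves the quadratic $q(x_1, x_2) = 0$ by computing a square root of $-\det$ modulo $p$ (when it is a QR), which costs $O(\log p)$ field operations via the randomized square-root extraction already used in Lemma~\ref{lem:Zp_overlattice_time}, i.e.\ $O(\log^2 p)$ bit operations, comfortably inside the $O(n^2 \log p)$ budget. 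For $n \ge 3$ the random-sampling bound with $\Theta(1)$ success probability is clean and needs only the point-count estimate above. So the write-up amounts to: (i) cite the point-count / Chevalley--Warning fact for $n \ge 3$; (ii) handle $n \le 2$ by the explicit square-root sub-step; (iii) multiply per-trial cost by expected trial count; (iv) sum over $p \mid D$ and rewrite $\sum_{p\mid D}\log p$ as $O(\Omega(D))$.
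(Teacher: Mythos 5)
Your proposal is in the right spirit but analyzes a different algorithm from the one the lemma is about. Hanke's Algorithm 3.11 does not sample a vector $x$ uniformly and test $q(x)=0$; it samples a random affine line, i.e.\ picks $\vec a,\vec m\in\F_p^n$ uniformly and reduces isotropy to the univariate quadratic $Q_B(\vec a+t\vec m)\equiv 0\pmod p$, which is then solved by computing its discriminant and extracting a square root mod $p$. This sidesteps entirely the $n=2$ difficulty you spend your third paragraph on: there is no regime in which one waits $\Theta(p)$ trials, because every trial ends with an explicit root extraction rather than a membership test. The paper's accounting is: $O(n^2)$ field operations to evaluate the restricted quadratic and its discriminant, plus $O(\log p)$ field operations for the square-root extraction, giving $O(n^2\log p)$ per prime. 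Note that this places the $\log p$ factor in a different spot than you do --- in the paper it counts \emph{field operations} for Tonelli--Shanks-style root finding, whereas you charge $\log p$ as the \emph{bit cost} of each of the $n^2$ field operations. The two accountings happen to land on the same expression, but yours is not a proof of the stated bound for the stated algorithm; since the lemma explicitly names Algorithm 3.11, you need to analyze that reduction (your hedge ``I would present whichever of these matches the actual algorithm'' is doing real work here).

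There is also a concrete error in your last step. You justify $\sum_{p\mid D}\log p = O(\Omega(D))$ via ``$\Omega(D)\ge \log_2 D$,'' but the inequality goes the other way: since every prime factor is at least $2$, one has $\Omega(D)\le \log_2 D$, and for $D$ prime $\Omega(D)=1$ while $\log D$ is unbounded. So $O(n^2\log D)=O(n^2\Omega(D))$ is false in general, and your derivation of the summed bound does not go through as written. (The paper's own proof is terse at this point --- it simply says that summing over $p\mid D$ yields the stated bound --- but it does not rest on the reversed inequality you invoke. The defensible conclusion from either argument is $O(n^2\sum_{p\mid D}\log p)=O(n^2\log D)$, which is what is actually used downstream in Corollary~\ref{cor:rep with hanke}.) You should either state the summed bound as $O(n^2\log D)$ or find a correct justification for the $\Omega(D)$ form; the chain you wrote would not survive refereeing.
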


\begin{proof}
The randomized reduction picks $\vec a,\vec m\in\mathbb{F}_p^n$ uniformly at random and reduces the isotropy test to solving a quadratic equation in one variable
\[
Q_B(\vec a+t\vec m)\equiv 0 \pmod p.
\]
Evaluating the quadratic polynomial and computing its discriminant takes $O(n^2)$ field operations. Testing whether the discriminant is a square and extracting a square root when it is, costs $O(\log p)$ field operations. Therefore the expected cost per prime is $O(n^2\log p)$; summing over $p \mid D$ yields the stated bound.
\end{proof}

\begin{lem} \label{lem:extend isotropic}
Extending an isotropic vector $0 \ne \vec v$ to a basis as in  \cite{Hanke}*{Algorithm 3.12} costs $O(n^2)$ per local factor, hence $O(n^2 \omega(D))$ in total.
\end{lem}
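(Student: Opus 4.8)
Lemma~\ref{lem:extend isotropic} asserts that extending a nonzero isotropic vector $\vec v \in \mathbb{F}_p^n$ to a basis, as done in \cite{Hanke}*{Algorithm 3.12}, costs $O(n^2)$ field operations per prime $p$ dividing $D$, hence $O(n^2 \omega(D))$ over all such primes.

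**The plan.** The plan is to unwind what \cite{Hanke}*{Algorithm 3.12} actually does and count the field operations. Given $0 \neq \vec v \in \mathbb{F}_p^n$, the task is to produce a matrix in $\GL_n(\mathbb{F}_p)$ whose first column is $\vec v$; equivalently, one extends $\vec v$ to an ordered basis of $\mathbb{F}_p^n$. The standard way to do this — and the one used in the algorithm — is to locate a coordinate $i$ with $v_i \neq 0$ (an $O(n)$ scan) and then take $\{\vec v\} \cup \{\vec e_j : j \neq i\}$, where $\vec e_j$ is the $j$-th standard basis vector; this set is a basis because the change-of-basis matrix is triangular up to a permutation with a nonzero pivot $v_i$. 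Writing down this matrix explicitly is $O(n^2)$ since it has $n^2$ entries, and that is where the $O(n^2)$ per prime comes from. If instead one wants an \emph{orthogonal}-type completion adapted to the quadratic form $Q_B$ (splitting off a hyperbolic plane through $\vec v$), one first finds a vector $\vec w$ with $B(\vec v, \vec w) \neq 0$ by trying the standard basis vectors and reading off the $i$-th entry of $B\vec v$ — again $O(n^2)$ to form $B\vec v$ and $O(n)$ to scan — and then Gram--Schmidt–orthogonalizes the remaining standard vectors against the plane $\langle \vec v, \vec w\rangle$, which is $n-2$ projections each costing $O(n)$ dot-product evaluations, for a total of $O(n^2)$. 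Either reading, the dominant cost is forming/handling an $n \times n$ matrix, so a single execution is $O(n^2)$ field operations.

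**Summing over primes.** In \cite{Hanke}*{Algorithm 4.6} the extension step is invoked once for each prime $p \mid D$ at which a local modification of the lattice is performed — that is, once per \emph{distinct} prime, since at each such prime the saturate–isotropic-vector–extend cycle is run. Therefore the total is $\sum_{p \mid D} O(n^2) = O(n^2 \omega(D))$, where $\omega(D)$ is the number of distinct prime factors of $D$, matching the notation fixed in Section~\ref{sec:background}. (This parallels Lemma~\ref{lem:isotropic vector}, whose per-prime cost $O(n^2 \log p)$ sums against $\Omega(D)$ rather than $\omega(D)$ because isotropic vectors may be sought repeatedly, whereas the basis extension is performed once per prime.)

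**Main obstacle.** There is no real mathematical difficulty here; the only care needed is bookkeeping. The subtle point is to confirm that the per-prime cost is genuinely $O(n^2)$ and not, say, $O(n^\psi)$ — i.e. that \cite{Hanke}*{Algorithm 3.12} does not secretly invoke a full linear-algebra solve over $\mathbb{F}_p$. Inspecting the algorithm shows it only forms one matrix-vector product $B\vec v$ and performs $O(n)$ vector operations, so $O(n^2)$ is correct and tight (one cannot do better, as the output basis matrix has $n^2$ entries). The other thing to pin down is that the step is run exactly once per distinct prime in the ambient maximal-overlattice procedure, which is where the factor $\omega(D)$ rather than $\Omega(D)$ enters; this follows from the structure of \cite{Hanke}*{Algorithm 4.6}.
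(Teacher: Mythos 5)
Your proposal is correct and follows essentially the same route as the paper: both arguments reduce the extension step to one matrix--vector product against the Gram matrix, a scan for a nonzero entry, and $O(n)$ elementary operations (shears/projections) each costing $O(n)$, then sum once over the distinct primes dividing $D$ to get $O(n^2\omega(D))$. Your parenthetical contrasting the $\omega(D)$ factor here with the $\Omega(D)$ factor in Lemma~\ref{lem:isotropic vector} is a nice clarification the paper leaves implicit.
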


\begin{proof}
With respect to the chosen basis compute the row $r=\vec v^T G$; this multiplication costs $O(n)$. Find an index $i$ with $r_i\neq 0$ (if one exists) and perform elementary column and row operations (shears and scalings) to make $\vec v$ the first basis vector and update the remaining basis vectors accordingly. Each elementary operation updates $O(n)$ entries and at most $O(n)$ such operations are required, yielding $O(n^2)$ operations per local factor.
Summing over $p \mid D$ yields the result.
\end{proof}

\begin{lem} \label{lem:cost of isotropic}
The runnning time complexity of \cite{Hanke}*{Algorithm 3.13, 3.14} is $O(n^3)$ per local factor, hence $O(n^3 \omega(D) + n^2 \Omega(D))$ in total.
\end{lem}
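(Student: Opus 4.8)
The plan is to unfold \cite{Hanke}*{Algorithm 3.13} and \cite{Hanke}*{Algorithm 3.14} and charge their cost to the subroutines already analyzed in Lemmas~\ref{lem:isotropic vector} and~\ref{lem:extend isotropic}. These two algorithms compute, over $\mathbb{F}_p$, a maximal totally isotropic subspace of the relevant discriminant quadratic space together with the corresponding enlargement of the lattice; their control flow is a loop that, while the working space still contains a nonzero isotropic vector, (i) produces such a vector by \cite{Hanke}*{Algorithm 3.11}, (ii) extends it to a basis by \cite{Hanke}*{Algorithm 3.12} so that it becomes the first basis vector, and (iii) splits off the resulting hyperbolic plane and passes to the Gram matrix of its orthogonal complement. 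Since each split decreases the dimension of the working space by $2$, the loop runs at most $\lfloor n/2\rfloor = O(n)$ times at each prime.

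First I would bound the deterministic part of one iteration. By Lemma~\ref{lem:extend isotropic}, extending the isotropic vector to a basis and updating the remaining basis vectors costs $O(n^2)$; forming the Gram matrix of the orthogonal complement is a further $O(n)$ row and column operations touching $O(n)$ entries each, so again $O(n^2)$. Summing over the $O(n)$ iterations, and using $\sum_{k\le n/2} O((n-2k)^2)=O(n^3)$, the linear algebra costs $O(n^3)$ per local factor, hence $O(n^3\omega(D))$ over all $p\mid D$. The assembly step of \cite{Hanke}*{Algorithm 3.14}, which takes preimages of the isotropic basis in the dual lattice and forms their span with $L$, is a bounded number of linear-algebra steps each costing $O(n^\psi)$, which is $O(n^3)$ and therefore absorbed.

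Next I would account for the randomized isotropic-vector searches: each invocation of \cite{Hanke}*{Algorithm 3.11} is covered by Lemma~\ref{lem:isotropic vector}, and collecting all such calls across the primes $p\mid D$ gives, by that lemma, a total of $O(n^2\Omega(D))$. Adding this to the linear-algebra cost yields the claimed $O(n^3\omega(D)+n^2\Omega(D))$; the stated per-local-factor figure of $O(n^3)$ is exactly the deterministic part, with the search cost folded into the $\Omega(D)$ term.

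The main obstacle I expect is bookkeeping rather than a substantive difficulty: one must check that a full hyperbolic plane is peeled off at every step, so that the iteration count is genuinely $O(n)$, and that recomputing the restricted form each time stays within $O(n^2)$ and does not covertly require an $O(n^\psi)$ diagonalization of the whole form. Once the loop invariant ``after $k$ steps the working space has dimension at most $n-2k$ and $k$ hyperbolic summands have been removed'' is in place, the summations above are immediate and the global bound follows by adding over $p\mid D$.
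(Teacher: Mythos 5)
Your proposal is correct and follows essentially the same route as the paper: both peel off hyperbolic planes one at a time at $O(n^2)$ per plane via elementary basis operations, multiply by the $O(n)$ iteration count to get $O(n^3)$ per prime (with the $O(n^\psi)$ radical/assembly steps absorbed), and then add the randomized isotropic-vector search cost from Lemma~\ref{lem:isotropic vector} to obtain the $O(n^2\Omega(D))$ term.
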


\begin{proof}
Let $A=dG\in\Z^{n\times n}$ be the integral Gram matrix after clearing denominators. Computing the radical is computing the nullspace of $A$, which can be achieved by Gaussian elimination in $O(n^\psi)$ time. Once a basis for the radical is obtained, extracting one hyperbolic pair requires a sequence of elementary row/column operations (basis changes and shears) that update $O(n)$ entries per elementary operation; extracting a single hyperbolic plane therefore costs $O(n^2)$. Extracting up to $\lfloor n/2\rfloor$ planes yields at most $O(n^3)$. Adding the cost from Lemma~\ref{lem:isotropic vector} yields the result.
\end{proof}

\begin{lem} \label{lem:isotrop}
The running time complexity of \cite{Hanke}*{Algorithm 3.16} is
\[
O\left(n^\psi \sum_{p \mid D} \log \nu_p(D) + n \omega(D)\Omega(D)\log D \right).
\]
\end{lem}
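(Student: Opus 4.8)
The plan is to analyze \cite{Hanke}*{Algorithm 3.16} by decomposing it into the subroutines already bounded in Lemmas~\ref{lem:satur}--\ref{lem:cost of isotropic} and identifying how many times each is invoked. Algorithm 3.16 is the global maximal-overlattice routine: it first clears denominators and saturates (the cost of \cite{Hanke}*{Algorithm 3.8}, namely $O(n^\psi \sum_{p\mid D}\log\nu_p(D))$ by Lemma~\ref{lem:satur}), then for each prime $p\mid D$ it works over $\F_p$ to find isotropic vectors, extend them to bases, extract hyperbolic planes, and repeatedly enlarge the lattice until it is $p$-maximal. So the first step is to recall the structure of the algorithm and pin down that the only primes that need to be touched are those dividing $D$, and that at each such prime the number of enlargement steps is controlled by the $p$-adic valuation of the discriminant.

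Next I would count iterations. As in the proofs of Theorem~\ref{thm:Zp_maximal_time} and Lemma~\ref{lem:Zp_overlattice_proof}, each time an isotropic vector is adjoined the relevant local exponent strictly drops, so at prime $p$ there are $O(\log\nu_p(D))$ — or at worst $O(\nu_p(D))$ — rounds, and what matters for the final bound is that each round costs the per-local-factor totals from Lemmas~\ref{lem:isotropic vector}, \ref{lem:extend isotropic}, and \ref{lem:cost of isotropic}, i.e. $O(n^3)$ field operations plus $O(\log p)$ for square-root extraction, together with one dual-lattice/saturation-style recomputation of cost $O(n^\psi)$. Multiplying the per-round cost by the number of rounds and summing over $p\mid D$ produces terms of the shape $n^\psi\sum_{p\mid D}\log\nu_p(D)$ from the linear-algebra recomputations, and $n^3\omega(D)$ together with $n^2\Omega(D)$ from the isotropic-vector machinery. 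The remaining term $n\,\omega(D)\,\Omega(D)\log D$ I would trace to the bit-level cost of the field and integer arithmetic: each of the $O(\Omega(D))$ total enlargement steps across all primes manipulates lattice coordinates whose entries have size $O(\log D)$ bits, over $O(\omega(D))$ distinct primes, with $O(n)$ updated entries per elementary operation; bookkeeping these gives the stated product.

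Finally I would assemble the pieces: add the saturation cost from Lemma~\ref{lem:satur}, the aggregated isotropic-vector and hyperbolic-plane costs from Lemmas~\ref{lem:isotropic vector}--\ref{lem:cost of isotropic}, and the arithmetic-bookkeeping term, observe that $n^3\omega(D)+n^2\Omega(D)$ is dominated once folded into the other summands in the regime of interest (or simply keep it and note it is absorbed), and conclude the bound $O\!\big(n^\psi \sum_{p\mid D}\log\nu_p(D)+n\,\omega(D)\,\Omega(D)\log D\big)$.

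The main obstacle I expect is the accounting for the $n\,\omega(D)\,\Omega(D)\log D$ term: unlike the other contributions, it does not come cleanly from a single cited subroutine but from the cumulative bit-complexity of integer/modular arithmetic spread across all primes and all enlargement steps, so the delicate part is to argue that entry sizes stay $O(\log D)$ throughout (the overlattice has determinant dividing that of $L$, so no blow-up occurs) and that no step is invoked more than $O(\Omega(D))$ times globally. Getting the interplay between $\omega(D)$ (number of primes, hence number of CRT/coordinate systems) and $\Omega(D)$ (total number of enlargement rounds) right is where I would be most careful.
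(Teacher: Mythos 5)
Your high-level plan---decompose \cite{Hanke}*{Algorithm 3.16} into the previously bounded subroutines and count invocations---is reasonable in outline, but there are two genuine gaps. The first concerns the source of the term $n\,\omega(D)\,\Omega(D)\log D$. You propose to ``trace'' it to the cumulative bit-level cost of integer and modular arithmetic spread across all enlargement steps, and you yourself flag this as the delicate point. In the paper this term has a concrete, identifiable origin that your argument never exhibits: the \emph{lifting phase} of the algorithm, in which, for each invariant factor $s_i=p^{k_1}k_2$ with $\gcd(k_2,p)=1$, a $p$-primary element of the discriminant group must be lifted to an element of $\dual{L}$ with vanishing non-$p$ components. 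Each such lift is a system of modular congruences with $O(\omega(D))$ moduli, solvable in $O(\omega(D)\log D)$ time, and the number of independent lifts is $O(n\,\Omega(D))$ (at most $n$ directions per prime, total valuation $O(\Omega(D))$); multiplying gives the term. Your substitute justification is not a proof: you observe that the four factors $n$, $\omega(D)$, $\Omega(D)$, $\log D$ multiply to the right answer, but you never point to a step of the algorithm whose cost is $\omega(D)\log D$ per unit of work, and generic bit-complexity bookkeeping is not the cost model used in this section (arithmetic is otherwise treated as unit-cost or via $M(\cdot)$).

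The second gap is the domination claim. Folding in Lemmas~\ref{lem:isotropic vector}--\ref{lem:cost of isotropic} produces terms $n^3\omega(D)+n^2\Omega(D)$, which you propose to dismiss as ``dominated'' or ``absorbed.'' They are not: $n^3\omega(D)$ is not $O\bigl(n^\psi\sum_{p\mid D}\log\nu_p(D)\bigr)$ since $\psi<3$, nor is it $O\bigl(n\,\omega(D)\,\Omega(D)\log D\bigr)$, which carries only a single factor of $n$. The stated bound is consistent only because the paper charges the isotropic-vector and hyperbolic-plane costs to Lemma~\ref{lem:cost of isotropic} separately; Theorem~\ref{thm:hanke_complexity} then sums the two lemmas, and the $n^3\omega(D)+n^2\Omega(D)$ terms reappear there. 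So the lemma you are proving bounds only the cost of Algorithm 3.16 proper---the saturation of Lemma~\ref{lem:satur}, the $O(n^\psi)$ Smith normal form computation, and the lifting phase---and your decomposition, taken literally, would prove a bound strictly weaker than the one claimed.
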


\begin{proof}

For a fixed invariant factor $s_i=p^{k_1}k_2$ with $\gcd(k_2,p)=1$, lifting a $p$-primary element of the discriminant to an element of $\dual{L}$ with vanishing non-\(p\) components amounts to solving systems of modular congruences. Each such system typically has $O(\omega(D))$ modulus sizes to consider (in the sense of valuations across primes), and solving a single such modular system can be implemented in $O(\omega(D) \log D)$ time. The total number of independent lifts is at most $O(n \Omega(D))$ (each prime contributes at most $n$ directions and the sum of valuations is $O(\Omega(D))$). Multiplying these estimates gives $O(n \omega(D)\Omega(D)\log D)$ for the lifting phase. Combining this with Lemma~\ref{lem:satur} and the $O(n^\psi)$ cost for SNF, see \cite{Storjohann}, yields the stated bound.
\end{proof}

\begin{lem}[Cost of \cite{Hanke}*{Algorithm 4.6}]\label{lem:neigh}
Finding an even $2$-neighbor, or finding that none exist, can be done in $O(n^\psi)$ time. 
\end{lem}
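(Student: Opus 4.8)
The plan is to reduce the problem to a bounded number of $\F_2$-linear-algebra steps on spaces of dimension at most $n$, each of which costs $O(n^\psi)$.

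First I would recall the neighbor construction at $p=2$: an even $2$-neighbor of a lattice $L$ with Gram matrix $G$ is $L'=\Z\cdot\tfrac{v}{2}+\{\,w\in L : B(v,w)\equiv 0\pmod 2\,\}$ for a vector $v\in L$ with $v\notin 2\dual L$ and $Q(v)\equiv 0\pmod 4$, and conversely every even $2$-neighbor has this form (when $L$ is odd one must additionally arrange that the index-$2$ sublattice above is even, which is one more linear condition modulo $2$). The key reduction is that it suffices to look for a nonzero class $\bar v\in L/2L$ that is isotropic for the $\F_2$-valued quadratic form $\bar Q$ induced by $Q$ modulo $2$ and that does not lie in the radical of its polar form $\bar B$ (equivalently, $v\notin 2\dual L$, using that $B(L,L)\subseteq\Z$): given such a $\bar v$, any lift $v\in L$ can be corrected by an element of $2L$ to make $Q(v)\equiv 0\pmod 4$, since the required correction is a single $\F_2$-linear equation in $\bar w$ that is solvable precisely because $\bar v\notin\operatorname{rad}\bar B$. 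Extracting $\bar Q$ and $\bar B$ from $G$ takes $O(n^2)$.

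Next I would test for such a $\bar v$ by computing a normal form of $(\bar Q,\bar B)$ over $\F_2$ — an orthogonal direct sum of hyperbolic planes, at most one copy of the anisotropic binary form $x^2+xy+y^2$, and the radical of $\bar B$ (on which $\bar Q$ is $\F_2$-linear) — while recording the base-change matrix; this is a standard $O(n^\psi)$ computation. An admissible $\bar v$ exists if and only if at least one hyperbolic summand occurs, in which case one of its basis vectors is isotropic and outside the radical; if no such summand occurs one certifies that $L$ has no even $2$-neighbor. Transporting $\bar v$ back through the base change, lifting it to $v\in L$, and solving the mod-$4$ correction is again $O(n^\psi)$.

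Finally, given $v$, I would assemble $L'$: the set $\{\,w\in L : B(v,w)\equiv 0\pmod 2\,\}$ is the kernel of a single $\F_2$-linear functional on $L/2L$, computed in $O(n^2)$, and a $\Z$-basis of $\Z\cdot\tfrac v2$ together with that kernel follows from a Hermite-normal-form computation in $O(n^\psi)$, see \cite{Storjohann}. Since every step runs in $O(n^\psi)$, so does the whole algorithm. I expect the main obstacle to be the careful treatment of the prime $2$: making the reduction of the second paragraph fully precise — in particular that ``$Q$ modulo $2$'' gives a well-defined datum on $L/2L$ while the evenness constraint genuinely lives modulo $4$ — and separating cleanly the cases where $L$ is even or odd and where $\bar B$ is or is not alternating, while keeping every subroutine within $O(n^\psi)$.
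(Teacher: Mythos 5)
Your route is genuinely different from the paper's. The paper invokes the parametrization from Hanke's notes of even $2$-neighbors by vectors $\vec w\in L_{\mathrm{even}}^{\perp}$ with $B(\vec w,\vec w)\equiv 0\pmod 8$, notes that for $n\ge 5$ such a $\vec w$ always exists by $2$-adic isotropy, and finds one by brute force over a constant-size set ($8^5$ candidates), so the whole cost is the final $O(n^\psi)$ basis construction. You instead decide existence for arbitrary $n$ by structured linear algebra on $(\bar Q,\bar B)$ over $\F_2$, which is more informative (it genuinely certifies non-existence, which the paper's proof only addresses implicitly via the $n\ge 5$ hypothesis) at the price of a delicate case analysis at $2$.

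Two concrete points need repair. First, your existence criterion ``an admissible $\bar v$ exists if and only if at least one hyperbolic summand occurs'' is false in one direction: if the nondegenerate part of $\bar B$ is the anisotropic plane $x^2+xy+y^2$ and $\bar Q$ is nonzero on the radical of $\bar B$, then $\bar v=\bar v_1+\bar v_2$ with $\bar v_1$ anisotropic and $\bar Q(\bar v_2)=1$ is isotropic for $\bar Q$ and lies outside the radical, yet there is no hyperbolic summand. (A concrete instance: $L=A_2\perp\langle q\rangle$ with $q$ odd in the $Q$-normalization.) As written, your algorithm would wrongly report that no even $2$-neighbor exists; the fix is to test isotropy of $\bar Q$ on the whole of $(L/2L)\setminus\operatorname{rad}\bar B$, splitting on whether $\bar Q$ vanishes on the radical. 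Second, the claim that the $\F_2$ normal form with base change is ``a standard $O(n^\psi)$ computation'' needs support: the textbook symplectic/quadratic reduction is $O(n^3)$, and getting down to $O(n^\psi)$ requires a recursive blocked elimination (or a citation); alternatively, since you only need one admissible vector rather than the full normal form, you can stop the decomposition as soon as a hyperbolic plane or a suitable radical vector appears. With those two repairs your argument gives a complete decision procedure and matches the claimed bound.
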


\begin{proof}
Let $L_{\mathrm{even}}$ denote the sublattice of $L$ consisting of all vectors $v\in L$ with $B(v,v)\equiv 0\pmod 2$. Following the description in \cite{HankeNotes}, every even $2$-neighbor $L'$ of $L$ can be represented in the form
\[
\frac 12\vec w + \{ \vec v\in L : H(\vec v,\vec w)\equiv 0\pmod 2\},
\]
for some $\vec w\in L_{\mathrm{even}}^\perp$ satisfying the condition $B(\vec w,\vec w)\equiv 0\pmod 8$. 
However, if $n \ge 5$, then $\vec w$ exists and can be found in $O(1)$ by enumerating $8^5$ vectors. Building the neighbor then takes $n^\psi$. 
\end{proof}

\begin{thm} \label{thm:hanke_complexity}
Under the hypotheses above, \cite{Hanke}*{Algorithm 4.6} runs in time
\[
O\bigl(n^3\omega(D) + n^\psi \sum_{p\mid D}\log \nu_p(D) + n^2 \Omega(D) + n\omega(D)\Omega(D) \log D\bigr).
\]
\end{thm}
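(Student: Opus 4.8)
The plan is to assemble the claimed bound for \cite{Hanke}*{Algorithm 4.6} by summing the costs of its constituent subroutines, all of which have been analyzed in the preceding lemmas. The algorithm for finding a maximal overlattice proceeds in two stages: first it reduces to a lattice whose local invariants are small (saturation plus refinement), and then it repeatedly adjoins isotropic vectors until local maximality is reached, handling the prime $2$ separately via even $2$-neighbors.

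First I would account for the saturation and refinement phase, which by Lemma~\ref{lem:satur} contributes $O\!\left(n^{\psi}\sum_{p\mid D}\log\nu_p(D)\right)$. Next, the core loop that locates isotropic vectors over $\F_p$, extends them to bases, and extracts hyperbolic planes is covered by Lemmas~\ref{lem:isotropic vector}, \ref{lem:extend isotropic}, and \ref{lem:cost of isotropic}; these together give $O(n^3\omega(D)+n^2\Omega(D))$, since the dominant per-local-factor cost is $O(n^3)$ from Lemma~\ref{lem:cost of isotropic} and there are $\omega(D)$ primes to treat, while the isotropic-vector search accumulates the $n^2\Omega(D)$ term. Then the lifting of discriminant-group elements to the dual lattice, the bottleneck at the level of arithmetic in $\Z/p^k\Z$, is handled by Lemma~\ref{lem:isotrop}, contributing $O\!\left(n^{\psi}\sum_{p\mid D}\log\nu_p(D)+n\omega(D)\Omega(D)\log D\right)$. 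Finally, the prime $2$ is dealt with by Lemma~\ref{lem:neigh}, whose $O(n^{\psi})$ cost is absorbed into the already-present $n^{\psi}\sum_{p\mid D}\log\nu_p(D)$ term (note $\sum_{p\mid D}\log\nu_p(D)\ge 0$, and when $2\mid D$ this term is at least a constant; if $D$ is odd there is no prime $2$ step).

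Adding these four contributions and discarding terms that are dominated — in particular, $n^{\psi}\le n^3$ so the $n^{\psi}\sum\log\nu_p(D)$ term from Lemma~\ref{lem:satur} is subsumed only if $\omega(D)$ is comparable, so I would keep it as written since $\psi<3$ but $\sum\log\nu_p(D)$ and $\omega(D)$ are incomparable in general — yields
\[
O\bigl(n^3\omega(D)+n^\psi\textstyle\sum_{p\mid D}\log\nu_p(D)+n^2\Omega(D)+n\omega(D)\Omega(D)\log D\bigr),
\]
which is the stated complexity. The main obstacle is bookkeeping: one must verify that every line of \cite{Hanke}*{Algorithm 4.6} is matched to exactly one of the cited lemmas, that no step has been double-counted or omitted, and that the "per local factor" costs are correctly aggregated — multiplying by $\omega(D)$ for steps run once per prime versus summing valuation-weighted contributions for steps run once per invariant factor. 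I expect the subtlest point to be confirming that the even $2$-neighbor step genuinely costs only $O(n^\psi)$ in the regime $n\ge 5$ and does not interact with the odd-prime loop in a way that inflates the bound; this is exactly what Lemma~\ref{lem:neigh} guarantees, so it reduces to citing it correctly.
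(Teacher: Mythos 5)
Your proposal is correct and follows essentially the same route as the paper, whose proof is simply to sum Lemma~\ref{lem:satur}, Lemma~\ref{lem:cost of isotropic}, Lemma~\ref{lem:isotrop}, and Lemma~\ref{lem:neigh}. Your additional bookkeeping (noting that Lemmas~\ref{lem:isotropic vector} and~\ref{lem:extend isotropic} are already absorbed into Lemma~\ref{lem:cost of isotropic}, and that the $O(n^\psi)$ cost of Lemma~\ref{lem:neigh} is dominated) is consistent with how those lemmas are stated.
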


\begin{proof}
Sum Lemma~\ref{lem:satur}, Lemma~\ref{lem:cost of isotropic},  Lemma~\ref{lem:isotrop}, and Lemma~\ref{lem:neigh}.
\end{proof}

\section{Local modifications} \label{sec:local modifications}

\SetKwFunction{NormalForm}{NormalForm}

In this section, we describe an algorithm implemented by Brandhorst in \cite{SageMath} to locally modify a lattice, following \cite{Kirschmer}. That is, given a prime $p$, a $\Z$-lattice $M$ such that $M_p$ is maximal, and a Gram matrix $G$ of a lattice isomorphic to $M$ over $\Q_p$, we return the lattice $L$ such that $L_p$ has Gram matrix $G$ and $L_q=M_q$ over all primes $q\neq p$. Let \NormalForm{$G$} return a matrix $T$ such that $TGT^{\top}$ is is $p$-adic normal form. Then the local modification algorithm is described in Algorithm~\ref{alg:local modification}.

\RestyleAlgo{ruled}
\begin{algorithm}[H]
    \caption{\protect\LocalModification{$M, G, p$}; Locally modifying a lattice.}\label{alg:local modification}
    \KwData{A prime $p$, a $\Z_p$ maximal lattice $M$, and a Gram matrix $G$ of a lattice isomorphic to $M$ over $\Q_p$}
    $d\gets$ denominator of $G^{-1}$;
    $d \gets p^{\nu_p(d)}$\;
    
    $L\gets$ integral lattice with standard basis and Gram matrix $G$\;
    $L_{\max}\gets$ \OPMaximal{$L$};
    $G_{\max}\gets$ the Gram matrix of $L_{\max}$\;
    $L_B\gets$ the basis of $L_{\max}$;
    $T_L\gets$ \NormalForm{$G_{\max}$};
    $B\gets (T_L L_B)^{-1}$\;
    $G_M\gets$ the Gram matrix of $M$\;
    $M_B\gets$ the basis of $M$;
    $T_M\gets$ \NormalForm{$G_M$};
    $B'\gets B\cdot T_M\cdot M_B$\;
    $N\gets$ the lattice spanned by $B'$\;
    \Return{$(N\cap M)+d M$}\;
\end{algorithm}
\begin{thm} \label{thm:local_mod_correct}
    Algorithm~\ref{alg:local modification} produces the correct output.
\end{thm}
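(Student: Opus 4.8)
The plan is to verify the two defining properties of the output $L' = (N\cap M)+dM$: that $L'_q = M_q$ for every prime $q\neq p$, and that $L'_p$ has Gram matrix $G$. The key conceptual point is that $B'$ is obtained from the standard basis of $L$ by a change of basis that, read over $\Z_p$, is an isometry carrying $L_{\max}$ onto $M_p$; so $N$ is a lattice that agrees with a rescaled copy of $L$ at $p$ and, after intersecting with $M$ and adding $dM$, lands exactly in the isometry class prescribed by $G$ at $p$ while leaving all other completions untouched. I would organize this into the two localizations below.

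\emph{Behaviour away from $p$.} First I would observe that $d = p^{\nu_p(\det G^{-1})}$ is a power of $p$, so for $q\neq p$ we have $dM_q = M_q$, hence $L'_q = (N_q\cap M_q) + M_q = M_q$ automatically, regardless of what $N$ is. This disposes of the first required property without any computation, using only that the denominator-clearing factor $d$ is $p$-primary.

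\emph{Behaviour at $p$.} Here is the substance. By Theorem~\ref{thm:Zp_maximal_time} (and its $p=2$ analogue) $L_{\max}$ is a $\Z_p$-maximal lattice in the quadratic space with Gram matrix $G$, which by hypothesis is isometric over $\Q_p$ to the space of $M$. I would then use that $p$-adic normal form is a complete invariant of the isometry class of a maximal $\Z_p$-lattice: since $T_L$ puts $G_{\max}$ into normal form and $T_M$ puts $G_M$ into normal form, and $L_{\max}\otimes\Q_p \simeq M\otimes\Q_p$ while both are $\Z_p$-maximal, the two normal forms coincide. Consequently the composite change of basis encoded in $B' = B\cdot T_M\cdot M_B$ realizes, over $\Z_p$, an isometry from $L_{\max}$ onto $M_p$; thus $N_p$ is the image of $L_p$ under this isometry, and in particular $N_p$ sits inside $\dual{M_p}$ with $N_p + M_p / M_p$ and $M_p / (N_p\cap M_p)$ mutually dual finite groups. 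A short local computation — tracking denominators through $B'$ — shows $p^{\nu_p(d)} \dual{M_p} \subseteq M_p$, so that $dM_p$ is coarse enough that $(N_p\cap M_p) + dM_p$ has Gram matrix exactly $G$ in the transported basis: the $\cap M$ step trims $N$ down to an integral lattice inside $M$, and the $+dM$ step restores precisely the part of $M$ that was cut away, yielding a lattice $\Z_p$-isometric to $L_p$, i.e. with Gram matrix $G$.

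\emph{Main obstacle.} The routine parts are the $q\neq p$ case and the bookkeeping of which matrices are integral or invertible over $\Z_p$. The real work is the claim that $(N\cap M)+dM$ has the prescribed Gram matrix at $p$: this requires knowing that $p$-adic normal form is a complete isometry invariant on maximal $\Z_p$-lattices (so that $T_L$, $T_M$ really do glue into a $\Z_p$-isometry $L_{\max,p}\to M_p$), and that the chosen $p$-power $d$ is simultaneously large enough that $dM_p$ absorbs the discrepancy between $N_p$ and $M_p$ yet small enough not to overshoot — this is exactly the role of $d = p^{\nu_p(\det G^{-1})}$, and I would verify the inclusion $d\,\dual{M_p}\subseteq M_p$ as the crux. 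Once that inclusion is in hand, comparing the finite quotients $\dual{M_p}/M_p$ and $M_p/(N_p\cap M_p)$ pins down $L'_p$ up to $\Z_p$-isometry, completing the proof.
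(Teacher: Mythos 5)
Your overall skeleton matches the paper's proof: you split by primes, dispose of $q\neq p$ using only that $d$ is a $p$-power, and at $p$ you correctly identify the key input, namely that the $p$-adic normal form of a $\Z_p$-maximal lattice is determined by the rational invariants (\cite{MirMor2009}*{Prop.~4.8}), so that $T_L$ and $T_M$ produce the same normal form $J$ and hence $B'=U_L^{-1}U_M$ satisfies $B' M_I B'^{\top}=G$, i.e.\ $N$ has Gram matrix $G$. Up to that point your argument is essentially the paper's.

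The gap is in the final step, where you must show that passing from $N$ to $(N\cap M)+dM$ does not change the localization at $p$. Your proposed mechanism — that $N_p$ sits inside $\dual{M_p}$, that the crux is the inclusion $d\,\dual{M_p}\subseteq M_p$, and that ``the $\cap M$ step trims $N$ down \dots and the $+dM$ step restores precisely the part of $M$ that was cut away'' — does not work as stated: if the intersection genuinely removed vectors of $N_p$ lying outside $M_p$, adding back a sublattice of $M_p$ could not restore them, and the inclusion $d\,\dual{M_p}\subseteq M_p$ by itself says nothing about the relation between $N_p$ and $M_p$. The correct (and simpler) argument, which is what the paper does, is to prove the two inclusions $N_p\subseteq M_p$ and $dM_p\subseteq N_p$, so that \emph{both} operations are no-ops at $p$: since $L\subseteq L_{\max}$, the matrix $U_L^{-1}$ has entries in $\Z_p$, whence $B'=U_L^{-1}U_M$ gives $N_p\subseteq M_p$ and $N_p\cap M_p=N_p$; and since $dU_L$ has entries in $\Z_p$ (this is where the choice $d=p^{\nu_p(\mathrm{denom}(G^{-1}))}$ enters), one gets $dM_p\subseteq N_p$, so $(N_p\cap M_p)+dM_p=N_p$. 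You should replace your ``trim and restore'' step with these two inclusions. (A minor point: the $\Z_p$-maximality of $L_{\max}$ is a correctness statement, not a consequence of the running-time Theorem~\ref{thm:Zp_maximal_time} you cite.)
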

\begin{proof}
    Clearly $N\cap M + dM \subseteq M$, and for a prime $q\neq p$, $M_q = dM_q$, hence also $(N_q\cap M_q)+d M_q = M_q$. It remains to consider $\Z_p$.
    
    Let $U_L=T_L L_B$ and $U_M=T_MM_B$.
    Then $U_L$ and $U_M$ are bases for which $L_{\max}$ and $M$ respectively are in normal form, and $B'=U_L^{-1}U_M$.
    By \cite{MirMor2009}*{Prop. 4.8}, the normal form at $p$ of a $\Z_p$-maximal lattice is uniquely determined by the rational invariants. Thus, since $L_{\max}$ and $M$ are both $\Z_p$-maximal lattices, we find that $L_{\max}$ and $M$ have the same normal form $J$. Thus, if $M_I$ is the inner product matrix of $M$, 
    $$
    U_LGU_L^\top=J=U_M M_I U_M^\top \implies B'M_IB'^\top=G.
    $$
    In particular, $B'$ is the basis of the lattice in the space of $M$ that has Gram matrix $G$. Thus, $N$ is the integral lattice with Gram matrix $G$.

    To finish, it suffices to show that $(N_p \cap M_p)+dM_p = N_p$. Observe that $L \subseteq L_{\max}$, hence $U_L^{-1}$ has all entries in $\Z_p$.
    Since $B'=U_L^{-1} U_M$, it follows that $N_p \subseteq M_p$, hence $N_p \cap M_p = N_p$. 
    Since $dU_L$ has all entries in $\Z_p$, we also have $dM_p \subseteq N_p$, hence
    $$(N_p\cap M_p)+d M_p=N_p+d M_p=N_p,$$
    completing the proof.
\end{proof}
\begin{thm} \label{thm:loc_mod_time}
    Suppose the time complexity of finding the $\Z_p$-maximal overlattice of a lattice $L$ with rank $n$ and determinant $D$ is $\pmaximal(n, D, p)$. Then Algorithm~\ref{alg:local modification} has time complexity 
    $O(n^\psi+\pmaximal(n, D, p))$.
\end{thm}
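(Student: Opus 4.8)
The plan is to go through Algorithm~\ref{alg:local modification} line by line, charging each step to either a normal-form computation, a maximal-overlattice computation, or a constant number of $n \times n$ matrix operations over $\Z_p$ (truncated to the relevant $p$-adic precision). First I would observe that the precision needed is bounded: the denominator exponent $\nu_p(d)$ is $O(\nu_p(D))$, since $\det(G)$ differs from $D$ only by a $p$-adic unit square, so every matrix in the algorithm can be stored and manipulated modulo a fixed power of $p$, and arithmetic operations are $O(1)$ under our conventions. The cost of computing $L_{\max}$ and its Gram matrix is exactly $\pmaximal(n,D,p)$ by hypothesis (and forming $G_{\max}$ from the basis is one matrix product, $O(n^\psi)$). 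Each of the two calls to \NormalForm{} on an $n \times n$ matrix costs $O(n^\psi)$, and each of the products $B \gets (T_L L_B)^{-1}$, $B' \gets B \cdot T_M \cdot M_B$, and the Gram-matrix reconstructions is a constant number of $n \times n$ matrix multiplications or inversions, hence $O(n^\psi)$ by Gaussian elimination.

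The remaining steps to account for are the intersection $N \cap M$ and the sum with $dM$ in the return line. Computing $N_p \cap M_p$ as sublattices of the space of $M$ amounts to a Hermite-normal-form (or Smith-normal-form) computation on the $2n \times n$ stacked coordinate matrix over $\Z_p$, which again is $O(n^\psi)$ by \cite{Storjohann}; adjoining $dM$ and reducing to a basis is the same kind of operation. So every individual step is either one \NormalForm{} call, one maximal-overlattice call, or $O(n^\psi)$, and there are only a constant number of steps; summing gives $O(n^\psi + \pmaximal(n,D,p))$.

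The one point that needs care — and the main obstacle — is confirming that we never need to work at a precision that is super-polynomial, i.e.\ that the bit-size blow-up is genuinely controlled; but this is exactly the observation above that $\nu_p(d) = O(\nu_p(D))$ together with the fact that $N \cap M + dM$ only depends on $N_p$ up to precision $d$, so all intermediate lattices can be represented by integer matrices with entries bounded independently of anything except $p$ and $\nu_p(D)$. Once that is in place, the proof is just the bookkeeping sketched above, and I would write it as: ``Each of the normal-form computations and the matrix inversions/products in Algorithm~\ref{alg:local modification} costs $O(n^\psi)$ by Gaussian elimination and \cite{Storjohann}; the single maximal-overlattice computation costs $\pmaximal(n,D,p)$ by hypothesis; summing the finitely many steps yields the claim.''
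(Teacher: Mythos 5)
Your proposal is correct and follows the same route as the paper's proof, which simply charges the single maximal-overlattice call to $\pmaximal(n,D,p)$ and observes that the remaining steps cost $O(n^\psi)$. Your version just fills in the bookkeeping (normal forms, matrix products/inversions, the intersection via HNF/SNF, and the precision bound $\nu_p(d)=O(\nu_p(D))$) that the paper leaves implicit.
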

\begin{proof}
    By definition, taking the $\Z_p$-maximal overlattice of $L$ has time complexity $\pmaximal(n, D, p)$. The remainder of the algorithm simply has time complexity $O(n^\psi)$, which yields our desired time complexity.
\end{proof}

\begin{cor} \label{cor:sage_locmod_time}
    Let $\delta\in(0, 1)$. Assuming that arithmetic takes $O(1)$ time, i.e., $M(x)=1$ for all $x$, then with the Algorithms \ref{alg:Zp_overlattice} and \ref{alg:Z2_maximal} for the maximal overlattice, Algorithm \ref{alg:local modification} has time complexity 
    \begin{enumerate}
        \item $O\left(n\log \nu_p(D)+\lambda_p\left(n^{\psi} + \log p \log(1/\delta)\right)\right)$ for odd primes $p$
        \item $O\left(n\log \nu_2(D) +\lambda_2 n^\psi +2^{\lambda_2}\right)$ for $p=2$
    \end{enumerate}
   where
   \[\lambda_p=\begin{cases}
       \min(\nu_2(D), 4n) & \text{if }p=2\\
       \min(\nu_p(D), n) & \text{otherwise}
   \end{cases}\]
\end{cor}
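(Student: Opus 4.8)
The plan is to invoke Theorem~\ref{thm:loc_mod_time}, which already reduces the running time of Algorithm~\ref{alg:local modification} to $O(n^\psi + \pmaximal(n,D,p))$, where $\pmaximal(n,D,p)$ denotes the cost of the $\Z_p$-maximal overlattice subroutine it calls. It then remains to substitute the explicit bounds for that subroutine under the standing assumption $M(x)=1$, and to simplify. The two cases of the statement correspond exactly to which subroutine is used: \OPMaximal (built from Algorithm~\ref{alg:Zp_overlattice}) for odd $p$, and Algorithm~\ref{alg:Z2_maximal} for $p=2$.

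For odd $p$, I would take the bound of Theorem~\ref{thm:Zp_maximal_time} for \OPMaximal and set $M(x)=1$, which collapses $M(\nu_p(D)\log p)$, $M(\lambda_p\log p)$, and $M(\log p)$ all to $1$. This leaves $O\bigl(n^\psi + n\log\nu_p(D) + \lambda_p n^\psi + \lambda_p\log p\log(1/\delta)\bigr)$, with failure probability at most $\delta$. Since $p\mid D$ forces $\nu_p(D)\ge 1$, we have $\lambda_p = \min(\nu_p(D),n)\ge 1$, so the leftover $n^\psi$ term from Theorem~\ref{thm:loc_mod_time} is absorbed into $\lambda_p n^\psi$. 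Regrouping the last two terms as $\lambda_p(n^\psi + \log p\log(1/\delta))$ yields claim~(1).

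For $p=2$, I would instead use Theorem~\ref{thm:Z2_maximal_time} for Algorithm~\ref{alg:Z2_maximal}, and again set $M(x)=1$; the bound becomes $O\bigl(n^\psi + n\log\nu_2(D) + \lambda_2 n^\psi + 2^{\lambda_2}\bigr)$. As before $\lambda_2\ge 1$ lets the extra $n^\psi$ from Theorem~\ref{thm:loc_mod_time} be swallowed by $\lambda_2 n^\psi$, giving $O\bigl(n\log\nu_2(D) + \lambda_2 n^\psi + 2^{\lambda_2}\bigr)$, which is claim~(2). This case is deterministic, so no failure probability appears.

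I do not expect a genuine obstacle here: the argument is bookkeeping on top of Theorems~\ref{thm:loc_mod_time}, \ref{thm:Zp_maximal_time}, and \ref{thm:Z2_maximal_time}. The only points needing care are (i) confirming $\lambda_p\ge 1$ so that the stray $n^\psi$ coming from the non-maximalization steps of Algorithm~\ref{alg:local modification} is dominated, and (ii) correctly carrying over the failure probability, which is inherited verbatim from Theorem~\ref{thm:Zp_maximal_time} in the odd case and is vacuous for $p=2$.
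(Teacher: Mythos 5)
Your proposal is correct and matches the paper's own argument: both combine Theorem~\ref{thm:loc_mod_time} with the $M(x)=1$ specializations of Theorems~\ref{thm:Zp_maximal_time} and~\ref{thm:Z2_maximal_time}, and both observe that the extra $O(n^\psi)$ from the non-maximalization steps is subsumed by the maximal-overlattice cost. Your explicit remark that $\lambda_p\ge 1$ justifies that absorption is a small but welcome addition of care.
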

\begin{proof}
    With Algorithms \ref{alg:Zp_overlattice} and \ref{alg:Z2_maximal} for the maximal overlattice, we know from Theorems~\ref{thm:Zp_maximal_time} and~\ref{thm:Z2_maximal_time} that $\pmaximal(n, d, p)$ has time complexity
   \begin{enumerate}
        \item $O\left(n\log \nu_p(D)+\lambda_p\left(n^{\psi} + \log p \log(1/\delta)\right)\right)$ for odd primes $p$, with probability of failure at most $\delta$,
        \item $O\left(n\log \nu_2(D) +\lambda_2 n^\psi +2^{\lambda_2}\right)$ for $p=2$,
    \end{enumerate}
    where we assumed $M(x)=1$ for all $x$. Note that the remainder of the algorithm has time complexity $O(n^\psi)$, which is subsumed by the time complexity for finding the $\Z_p$-maximal overlattice of $L$, so we are done.
\end{proof}

\begin{cor} \label{cor:hanke_locmod_time}
    With \cite{Hanke}*{Algorithm 4.6}, Algorithm~\ref{alg:local modification} has time complexity
    \[O(n^3+ n^2 \log p + n^\psi\log \nu_p(D) ).\]
\end{cor}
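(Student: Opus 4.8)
The plan is to combine Theorem~\ref{thm:loc_mod_time} with the single-prime specialization of the cost analysis carried out for \cite{Hanke}*{Algorithm 4.6} in Lemmas~\ref{lem:satur}--\ref{lem:neigh}. By Theorem~\ref{thm:loc_mod_time}, Algorithm~\ref{alg:local modification} runs in $O(n^\psi + \pmaximal(n,D,p))$, where $\pmaximal(n,D,p)$ denotes the cost of computing a $\Z_p$-maximal overlattice of the rank-$n$ input; since $n^\psi \le n^3$, it suffices to bound $\pmaximal(n,D,p)$ by $O(n^3 + n^2\log p + n^\psi\log\nu_p(D))$ when that step is carried out via \cite{Hanke}*{Algorithm 4.6} applied at the single prime $p$.

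First I would specialize the per-prime estimates underlying Theorem~\ref{thm:hanke_complexity} to $\omega(D) = 1$ and $\Omega(D) = \nu_p(D)$ (so that $\log D$ is replaced by $\nu_p(D)\log p$). Saturation (Lemma~\ref{lem:satur}) costs $O(n^\psi\log\nu_p(D))$; one round of isotropic-vector search over $\F_p$ together with extending it to a basis (Lemmas~\ref{lem:isotropic vector} and~\ref{lem:extend isotropic}) costs $O(n^2\log p + n^2)$; computing the radical and splitting off hyperbolic planes (Lemma~\ref{lem:cost of isotropic}) costs $O(n^3)$; the lifting/assembly step (Lemma~\ref{lem:isotrop}) costs $O(n^\psi\log\nu_p(D) + n\,\nu_p(D)^2\log p)$ at a single prime; and, when $p=2$, the even $2$-neighbor search (Lemma~\ref{lem:neigh}) costs $O(n^\psi)$. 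Adding these, the only leftover terms not already of the claimed shape are $n\,\nu_p(D)^2\log p$ and the $n^2\Omega(D)=n^2\nu_p(D)$ contribution; using $\nu_p(D)\le\log_2 D$ and $\nu_p(D)\log p\le\log D$, both are $O(n^2\log D)$, which is $O(n^3)$ in the regime $\log D \ll n$ in which the corollary is used, so both are absorbed into $n^3$.

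The main point to verify carefully is that running \cite{Hanke}*{Algorithm 4.6} at only the prime $p$ genuinely removes the cross-prime Chinese remainder work: in the single-prime setting there are no non-$p$ components to clear when lifting discriminant elements to $\dual L$, so the $n\omega(D)\Omega(D)\log D$ term of Theorem~\ref{thm:hanke_complexity} collapses to at most $n\,\nu_p(D)^2\log p$, which is then dominated as above. Everything else is a direct substitution into the already-established per-prime bounds, together with the observation that the $O(n^\psi)$ overhead from Theorem~\ref{thm:loc_mod_time} is negligible against $n^3$.
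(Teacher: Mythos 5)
Your proposal is correct and follows the same route as the paper: the paper's proof is just the one-line observation that the bound follows by restricting the analysis of Theorem~\ref{thm:hanke_complexity} to the single prime $p$ and feeding the result into Theorem~\ref{thm:loc_mod_time}, which is exactly what you carry out in detail. One caveat: your absorption of the leftover terms $n^2\nu_p(D)$ and $n\,\nu_p(D)^2\log p$ into $n^3$ invokes the regime $\log D \ll n$, which is not a hypothesis of the corollary as stated; this is harmless in the paper's applications (all of which assume $\log D \ll n$), and the first of these terms can alternatively be dispensed with by noting that Lemma~\ref{lem:isotropic vector} already gives a per-prime cost of $O(n^2\log p)$, so no absorption is needed for it.
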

\begin{proof}
    This follows immediately from a simple modification of Theorem~\ref{thm:hanke_complexity} for maximal overlattices over $\Z_p$.
\end{proof}

\section{Counting Genus Symbols} \label{sec:complexity analysis}

We aim to asymptotically compute the number of genus symbols with a fixed rank and determinant. In this section, we describe how to compute the number of genus symbols for an odd prime $p$ when $n>\nu_p(D)$, and we give upper and lower bounds in the case $p=2$.
Let $L_p(n,D)$ denote the number of valid $p$-adic genus symbols of rank $n$ and determinant $D$.

\begin{thm} \label{thm:odd local symbols count}
    Let $n,D\in \mathbb Z$ with $n>0$. Let $2 \ne p \mid D$ with $\nu_p(D)<n$. Then
    $$
    L_p(n,D) \sim \frac{1}{8\,\nu_p(D)}\exp\left(\pi\sqrt{\nu_p(D)}\right).
    $$
\end{thm}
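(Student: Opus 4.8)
The plan is to translate the counting of valid $p$-adic genus symbols of rank $n$ and determinant $D$ (for odd $p$, with $\nu_p(D) < n$) into a classical partition-counting problem, and then apply the Hardy–Ramanujan asymptotic. Recall that for an odd prime $p$, a $p$-adic genus symbol is a Jordan decomposition: one records, for each scale $p^i$, the dimension $n_i \ge 0$ of the $i$-th Jordan block together with a sign $\varepsilon_i \in \{\pm 1\}$ (the normalized determinant of that block, which is only meaningful when $n_i > 0$), subject to $\sum_i n_i = n$ and $\sum_i i\, n_i = \nu_p(D)$. Write $m = \nu_p(D)$. Since $n > m$, the dimension constraint $\sum n_i = n$ is never binding — there is always enough "room" in the scale-$0$ block to absorb leftover dimension — so the symbol is determined by the data of the blocks at positive scales together with their signs. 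First I would make this reduction precise: the nonzero blocks at scales $i \ge 1$ contribute a composition-like object whose weighted size is at most $m$, hence corresponds to a partition of some $m' \le m$ into parts (with multiplicity $n_i$ at part $i$), and the scale-$0$ block mops up the rest; the sign data contributes a factor of $2$ for each scale $i \ge 0$ that actually appears with positive dimension.

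Next I would set up the generating function. Letting $b_i = 1$ if $n_i > 0$ and $0$ otherwise, the number of symbols is, up to lower-order corrections, a sum over partitions $\mu \vdash m'$ (for $0 \le m' \le m$) of $2^{(\#\text{distinct part sizes of }\mu) + 1}$, where the $+1$ accounts for the sign at scale $0$. So the count is governed by $\sum_{\lambda} 2^{d(\lambda)}$ over partitions $\lambda$ of size $\le m$, with $d(\lambda)$ the number of distinct part sizes. The generating function for $\sum_\lambda 2^{d(\lambda)} q^{|\lambda|}$ is $\prod_{k \ge 1}\left(1 + \dfrac{2q^k}{1-q^k}\right) = \prod_{k\ge 1}\dfrac{1+q^k}{1-q^k}$, and I would invoke (or cite) the standard singularity analysis / saddle-point (Meinardus-type) theorem for such infinite products. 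The exponential growth rate is determined by the behavior of $\sum_{k\ge 1}\log\frac{1+q^k}{1-q^k}$ as $q \to 1^-$: writing $q = e^{-\tau}$, this sum behaves like $\dfrac{2}{\tau}\sum_{j\ge 0}\dfrac{1}{(2j+1)^2} = \dfrac{2}{\tau}\cdot\dfrac{\pi^2}{8} = \dfrac{\pi^2}{4\tau}$ (using $\log\frac{1+x}{1-x} = 2\sum_{j\ge0}\frac{x^{2j+1}}{2j+1}$ and interchanging sums), which is exactly twice the leading singularity of the ordinary partition generating function $\prod(1-q^k)^{-1} \sim \exp(\pi^2/6\tau)$. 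Saddle-point then yields coefficient asymptotics of the shape $c\, m^{-a}\exp(\pi\sqrt{m})$ — the exponent $\pi\sqrt{m}$ coming from $2\sqrt{(\pi^2/4)\cdot m} \cdot \tfrac12$... more precisely from balancing $\pi^2/(4\tau) + m\tau$ at $\tau = \pi/(2\sqrt m)$, giving exponent $\pi\sqrt m$. Summing over $m' \le m$ only changes the constant and polynomial factor, not the exponential, and is dominated by $m' = m$.

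Finally I would pin down the constant $\tfrac{1}{8m}$ and the polynomial power $m^{-1}$. This requires carrying the saddle-point expansion to the next order: the Gaussian correction contributes a factor proportional to $\tau^{3/2} \sim m^{-3/4}$, and the prefactor of the generating function at the saddle (the subexponential part of $\prod\frac{1+q^k}{1-q^k}$, analyzed via its Mellin transform) contributes another power of $m$, together with a $\sqrt{\pi}$ and powers of $2$; the sign factor $2^{d(\lambda)+1}$ versus the bare partition count shifts the effective modular weight and hence the power of $m$. I would either carry this out directly via Meinardus's theorem (checking its three hypotheses for the Dirichlet series $\sum 2/(2j+1)\cdot(\text{scale } k)$, whose relevant Dirichlet series is $2(1-2^{-s})\zeta(s)\cdot\zeta(s) = \ldots$ — in fact the Dirichlet series attached to the product is $D(s) = \sum_{k}\frac{2}{k^s}\cdot[\text{odd-index weights}]$, with a simple pole of residue $\pi^2/4$ at... ), or, more cleanly, relate $\sum_\lambda 2^{d(\lambda)}q^{|\lambda|}$ directly to known results on partitions into distinct and unrestricted parts and quote the asymptotic. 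The main obstacle is this last step — extracting the exact constant $1/(8m)$ and confirming the precise power $m^{-1}$ (rather than $m^{-3/4}$ or $m^{-5/4}$), which demands a careful second-order saddle-point computation and bookkeeping of the $2$-power from the signs; by contrast, the combinatorial reduction and the exponential rate $\exp(\pi\sqrt{m})$ are routine. I would double-check the constant against small cases computationally to guard against an error in the weight.
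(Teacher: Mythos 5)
Your strategy coincides with the paper's: encode the $p$-adic symbol as a partition of $k=\nu_p(D)$ weighted by a power of $2$ for the sign choices, recognize the generating function $\prod_{i\ge1}\frac{1+x^i}{1-x^i}$, and extract coefficient asymptotics by a Meinardus-type saddle point (the paper uses the Dirichlet series $L(s)=(2-2^{-s})\zeta(s)$). But the reduction as you state it has two errors. First, the determinant forces $\sum_{i\ge1} i\,n_i=\nu_p(D)$ \emph{exactly}: the scale-$0$ block contributes nothing to the $p$-valuation, so it cannot ``mop up'' any deficit there; it only absorbs slack in the dimension constraint $\sum_i n_i=n$, which is where $\nu_p(D)<n$ enters. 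Your proposal to sum over $m'\le m$ is therefore not a harmless overcount that is ``dominated by $m'=m$'': since $\sum_{m'\le m}e^{\pi\sqrt{m'}}/m'\asymp m^{-1/2}e^{\pi\sqrt m}$, it would change the polynomial factor from $m^{-1}$ to $m^{-1/2}$ and contradict the asymptotic you are trying to prove. Second, the sign weight should be $2^{d(\lambda)}$, not $2^{d(\lambda)+1}$: the product of the signs over all nonzero constituents (including scale $0$, which is nonzero precisely because $n>\nu_p(D)$) is forced to equal $\bigl(\tfrac{Dp^{-k}}{p}\bigr)$, so one sign is determined and your count is high by a factor of $2$, yielding $\tfrac{1}{4k}$ in place of $\tfrac{1}{8k}$.

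Beyond these, you explicitly leave open the determination of the constant and of the power $m^{-1}$, which you call ``the main obstacle'' --- but for a statement whose entire content is the precise asymptotic $\tfrac{1}{8k}e^{\pi\sqrt k}$, this is the proof. The paper closes it by applying Meinardus's theorem directly to $G(x)=\prod_{i\ge1}(1-x^{2i-1})^{-2}(1-x^{2i})^{-1}$: the associated Dirichlet series $L(s)=(2-2^{-s})\zeta(s)$ has residue $\tfrac32$ at $s=1$, giving exponent $2\sqrt{k\,\zeta(2)\cdot\tfrac32}=\pi\sqrt k$, and the remaining Meinardus data produce the prefactor $\tfrac1{8k}$. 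Your sketch names the right tool and gets the exponential rate correctly, but the combinatorial weight must be fixed as above and the Meinardus hypotheses and constants actually evaluated before the claimed formula follows.
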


\begin{proof}
Fix an odd prime $p\mid D$ and write $k=\nu_p(D)$. A $p$-adic genus symbol may be described by a choice, for each $i\ge1$, of a nonnegative integer $d_i$ equal to the multiplicity of the $p^i$-constituent and, for each nonzero constituent, of a sign $\varepsilon_{p^i}\in\{\pm1\}$ (the value of the Legendre symbol of the local determinant). The constraints on dimensions are
$$
\sum_{i\ge1} i\,d_i = k,
$$
and, if $t$ denotes the number of indices $i$ with $d_i>0$, then the vector of signs contributes a factor of $2^{\,t-1}$ (one sign is determined by the determinant condition). Thus the combinatorial count we need is $c_k$, the coefficient of $x^k$ in
$$
G(x)=\prod_{i\ge1}\bigl(1+\sum_{t\ge1}2x^{ti}\bigr)
= \prod_{i\ge1}\frac{1+x^i}{1-x^i}
= \prod_{i\ge1} (1-x^{2i-1})^{-2}(1-x^{2i})^{-1}.
$$

To obtain the asymptotic behaviour of $c_k$ we apply Meinardus's theorem (see, e.g., \cite{Meinardus}; for details see \cite{Granovsky2007}). 
The Dirichlet series associated to $G$ is
$$
L(s)= \sum_{\substack{i\ge1\\ i\text{ odd}}}\frac{2}{i^s} + \sum_{\substack{i\ge2\\ i\text{ even}}}\frac{1}{i^s}=(2-2^{-s})\zeta(s).
$$

Applying the theorem gives for $k\to\infty$ yields
$$
c_k \sim \frac{1}{8} k^{-(L(0)+1)/2}\exp\left( 2 \sqrt{k \zeta(2) \Res_{s=1} L(s)} \right) = \frac{1}{8k} \exp(\pi \sqrt{k}),
$$
hence the result.
\end{proof}

\subsection{\texorpdfstring{$p=2$}{p=2} case}

We give a crude but explicit bound for the number of $2$-adic genus symbols. The counting is less regular at $p=2$ because of the richer train\allowbreak/compartment structure described in Conway and Sloane; our estimate deliberately overcounts several choices to obtain a simple closed form.

\begin{thm} \label{thm:dyadic local symbols count}
    Let $n,D\in\mathbb Z$ with $n>0$ and $k = \nu_2(D)<n$. Then
    \begin{equation} \label{eq:number 2-adic symbols bounds}
    \frac{1}{4\sqrt{3}\,k}\exp\left(\pi\sqrt{\tfrac{2k}{3}}\right)
    \le L_2(n,D) \le 
    \frac{3}{2\sqrt{51}\,k}\left(\frac{3+\sqrt{17}}{2}\right)^{\,k}\exp\left(\pi\sqrt{\tfrac{2k}{3}}\right).
    \end{equation}
\end{thm}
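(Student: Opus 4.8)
The approach mirrors the proof of Theorem~\ref{thm:odd local symbols count}: reduce the count $L_2(n,D)$ to a coefficient-extraction problem for a suitable generating function, then apply Meinardus's theorem to obtain exponential-order asymptotics, and finally read off the two-sided bound by bracketing the true generating function between two tractable ones. The key structural input is the Conway–Sloane description of $2$-adic genus symbols in terms of Jordan constituents of scales $2^i$, each constituent carrying a dimension $d_i$, a sign, an oddity (type I vs.\ type II), and the train/compartment constraints that link consecutive constituents. Since $k=\nu_2(D)<n$ guarantees enough "room" in the rank, the only binding constraint on the multiset of scales is again $\sum_i i\,d_i = k$; the remaining data (signs, oddities, compartment structure) contribute multiplicative factors that I will bound above and below by constants times simple functions of the number $t$ of nonzero constituents.

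\emph{Lower bound.} First I would isolate a sub-family of genus symbols that is easy to count exactly: for instance, restrict to type II constituents only (even symbols), where the compartment structure is trivial and the sign of each nonzero constituent is free except for one global constraint. This yields a generating function of the shape $\prod_{i\ge 1}\bigl(1+\sum_{t\ge1} 2 x^{2ti}\bigr)$ — note the $x^{2ti}$ rather than $x^{ti}$, because type II constituents carry even scale contributions in the relevant normalization, which is precisely what produces the $\sqrt{2k/3}$ in the exponent instead of $\sqrt{k}$. Working this out, the associated Dirichlet series is a constant multiple of $(2-2^{-s})\zeta(s)$ evaluated with the scaling $i\mapsto 2i$, and Meinardus's theorem gives a lower bound of the stated form $\frac{1}{4\sqrt 3\,k}\exp(\pi\sqrt{2k/3})$. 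Since this sub-family is contained in the full set of $2$-adic symbols, the lower bound on $L_2(n,D)$ follows.

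\emph{Upper bound.} Here I would deliberately overcount. For each admissible scale-multiplicity vector $(d_i)$ with $\sum i d_i = k$ and $t$ nonzero entries, I bound the number of completions (choices of signs, oddities, and compartment data) by $C\cdot \rho^{\,k}$ for an explicit constant $C$ and base $\rho=\tfrac{3+\sqrt{17}}{2}$; the exponent base $\tfrac{3+\sqrt{17}}{2}$ should emerge as the growth rate of a transfer-matrix / linear recurrence counting the local choices per unit of scale (this is exactly where $\alpha=\log_2\rho-1$ in Theorem~\ref{thm:main theorem} comes from). Factoring out $\rho^k$, the residual sum over $(d_i)$ is again governed by the same partition-type generating function as in the lower bound, so Meinardus gives the $\frac{1}{k}\exp(\pi\sqrt{2k/3})$ shape, and tracking the constants yields the coefficient $\frac{3}{2\sqrt{51}}$.

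\emph{Main obstacle.} The delicate point is pinning down the base $\rho=\tfrac{3+\sqrt{17}}{2}$ and the accompanying constant in the upper bound: one must set up the right transfer matrix for the train/compartment rules of Conway–Sloane, verify that its spectral radius is exactly the root of $\rho^2 = 3\rho + 2$, and confirm that the overcounting is loose enough to be clean but tight enough that the exponential-in-$\sqrt k$ factor is not swamped. Matching the precise rational constants $\tfrac{1}{4\sqrt3}$ and $\tfrac{3}{2\sqrt{51}}$ to the output of Meinardus's theorem (which requires computing $L(0)$ and $\operatorname{Res}_{s=1}L(s)$ for the relevant Dirichlet series, including the factor from the scaling $i\mapsto 2i$) is routine but must be done carefully, since $\sqrt{51}=\sqrt{3}\sqrt{17}$ ties the two constants together through the discriminant of $\rho^2-3\rho-2$.
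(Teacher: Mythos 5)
Your upper-bound sketch is essentially the paper's argument: the paper multiplies the partition count $p(k)$ of rank assignments by the solution of the linear recurrence $S_0(k)=3S_0(k-1)+2S_0(k-2)$ (arising from a binary encoding of the type sequence, with factors $4$ per compartment and $2$ per train), whose dominant root is $\lambda_1=\tfrac{3+\sqrt{17}}{2}$ and whose explicit solution $\tfrac{6}{\sqrt{17}}(\lambda_1^k-\lambda_2^k)$ combines with the Hardy--Ramanujan constant $\tfrac{1}{4\sqrt3}$ to give $\tfrac{3}{2\sqrt{51}}$. You correctly identified the recurrence, its characteristic polynomial $\rho^2-3\rho-2$, and the provenance of $\sqrt{51}=\sqrt{3}\sqrt{17}$.

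Your lower bound, however, has a genuine gap. Restricting to type~II constituents forces every Jordan block to have even dimension, so the rank vector satisfies $\sum_i i\,d_i=k$ with all $d_i$ even; this sub-family is \emph{empty} whenever $k$ is odd, and for even $k$ its generating function (your $\prod_i(1+\sum_{t\ge1}2x^{2ti})$) is the odd-prime generating function evaluated at $x^2$, whose $k$-th coefficient grows like $\exp(\pi\sqrt{k/2})$ by Meinardus (the residue of the associated Dirichlet series at $s=1$ is $3/4$, not $3/2$). Since $\pi\sqrt{k/2}<\pi\sqrt{2k/3}$, this does not establish the claimed lower bound. Your heuristic that the substitution $x^i\mapsto x^{2i}$ "produces the $\sqrt{2k/3}$" is backwards: $\pi\sqrt{2k/3}$ is simply the Hardy--Ramanujan exponent for the \emph{unrestricted} partition function. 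The paper's lower bound is accordingly much simpler: every partition $(d_i)$ of $k$ is a valid rank assignment admitting at least one compatible choice of types, oddities, and signs, so $L_2(n,D)\ge p(k)$, and Hardy--Ramanujan gives $p(k)\asymp \tfrac{1}{4k\sqrt3}\exp\bigl(\pi\sqrt{2k/3}\bigr)$ directly. Replacing your type-II restriction with this observation repairs the argument.
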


\begin{proof}
Let $p(k)$ denote the number of partitions of $k$ into positive integers. By the Hardy--Ramanujan asymptotic (with a classical error term; see e.g. \cite{HR17Circle}), 
$$
p(k) \asymp \frac{1}{4k\sqrt{3}}\exp\left(\pi\sqrt{\tfrac{2k}{3}}\right)\left(1+O(k^{-1/2})\right).
$$
Hence the number of possible rank assignments is $p(k)$, and this gives the lower bound in \eqref{eq:number 2-adic symbols bounds} since at least one choice of types/oddities/signs is compatible with any fixed partition.

For the upper bound we overcount: for each part we choose a type (Type I or II), for each compartment we choose an oddity (at most 4 choices), and for each train we choose a sign (we allow a factor of $2$ even when the train might have zero total rank; this overcounts but simplifies the bound). Encode the type sequence of length $k+2$ as a binary string with $1$ indicating Type I and $0$ Type II, for valuations between $0$ and $k+1$, so that the last digit is always $0$. A compartment corresponds to an occurrence of the substring $10$; a pair of adjacent Type II blocks (substring $00$) contributes an extra factor coming from train structure. Denote by $N_b(k,x,y)$ the number of binary strings of length $k+1$ ending with $b$ with $x$ occurrences of $10$ and $y$ occurrences of $00$, and let $S_b(k) = \sum_{x,y} N_b(k,x,y) 4^x 2^y$. Then 
\begin{align*}
N_0(k,x,y) &= N_0(k-1,x,y-1) + N_1(k-1,x-1,y), \\
N_1(k,x,y) &= N_0(k-1,x,y) + N_1(k-1,x,y),
\end{align*}
showing that $S_0(k) = 2S_0(k-1) + 4S_1(k-2)$ and $S_1(k) = S_0(k-1) + S_1(k-1)$, hence
$S_0(k) = 3S_0(k-1) + 2S_0(k-2)$. Since $S_0(0) = 0$ and $S_0(1) = 6$, it follows that $S_0(k) = \frac{6}{\sqrt{17}} (\lambda_1^k - \lambda_2^k)$, where $\lambda_{1,2} = \tfrac{3 \pm \sqrt{17}}{2}$.
Multiplying this combinatorial bound by the partition count $p(k)$ (to account for choices of ranks) produces the stated upper bound.
Combining the two estimates gives \eqref{eq:number 2-adic symbols bounds}.
\end{proof}

Before we conclude the proof of our main theorems for the running time complexity estimates, the following Lemma will be useful.

\begin{lem} \label{lem: num genus symbols}
    Let us denote 
    \begin{equation} \label{eq: arith function}
    f(D) = \prod_{p \mid D} \frac{\exp\left(\pi \sqrt{\nu_p(D)}\right)}{\nu_p(D)}.
    \end{equation}
    Then for all $D$ we have
    $
    f(D) \le D^{ \frac{\pi (1 + o(1)) }{\log \log D} }.
    $
\end{lem}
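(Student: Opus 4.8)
The plan is to take logarithms and reduce the claim to an optimization problem over the exponents $\nu_p(D)$. Writing $\ell = \log D$, each prime $p \mid D$ contributes $\nu_p(D) \log p$ to $\ell$, so $\sum_{p \mid D} \nu_p(D) \log p = \ell$. Taking $\log$ of $f(D)$ gives
\[
\log f(D) = \sum_{p \mid D} \left( \pi \sqrt{\nu_p(D)} - \log \nu_p(D) \right) \le \pi \sum_{p \mid D} \sqrt{\nu_p(D)},
\]
so it suffices to show $\sum_{p \mid D} \sqrt{\nu_p(D)} \le \frac{(1+o(1)) \ell}{\log \log D}$. Since $\log p \ge \log 2$ for every prime divisor, $\sum_p \nu_p(D) \le \ell / \log 2 = O(\ell)$, and the number of summands is $\omega(D)$. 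By Cauchy--Schwarz,
\[
\sum_{p \mid D} \sqrt{\nu_p(D)} \le \sqrt{\omega(D)} \cdot \sqrt{\sum_{p \mid D} \nu_p(D)} = O\!\left( \sqrt{\omega(D)} \cdot \sqrt{\ell} \right).
\]

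Next I would invoke the standard bound $\omega(D) = O\!\big(\tfrac{\log D}{\log \log D}\big)$ — indeed the maximal order of $\omega$ satisfies $\omega(D) \sim \tfrac{\log D}{\log \log D}$ — which turns the Cauchy--Schwarz estimate into $\sum_p \sqrt{\nu_p(D)} = O\!\big( \ell / \sqrt{\log \log D} \big)$. This is already strong, but not quite the claimed $\ell / \log\log D$: the naive Cauchy--Schwarz loses because it implicitly allows each $\nu_p(D)$ to be as large as $\Theta(\ell/\omega(D))$ while simultaneously allowing $\omega(D)$ primes, which cannot both be extremal. The fix is to split the sum according to the size of $\log p$. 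For the primes with $\log p \ge \log \log D$ (the "large" primes), there are at most $\ell / \log \log D$ of them and each $\nu_p(D) \le \ell/\log p \le \ell/\log\log D$; applying Cauchy--Schwarz only to this part gives a contribution $O(\ell/\log\log D)$. For the "small" primes $p$ with $\log p < \log\log D$, there are at most $\pi(\log\log D) = O(\log\log D / \log\log\log D)$ of them, and each $\nu_p(D) \le \ell / \log 2$, so their contribution to $\sum \sqrt{\nu_p(D)}$ is $O\!\big( \sqrt{\ell} \cdot \log\log D \big) = o(\ell/\log\log D)$, since $\sqrt\ell (\log\log D)^2 = o(\ell)$. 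Adding the two pieces yields $\sum_{p \mid D} \sqrt{\nu_p(D)} \le \frac{(1+o(1))\ell}{\log\log D}$, hence $\log f(D) \le \frac{\pi(1+o(1))\ell}{\log\log D}$, which is exactly $\log$ of the asserted bound.

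The main obstacle is precisely the gap between the easy Cauchy--Schwarz estimate (which only gives the exponent $O(1/\sqrt{\log\log D})$, too weak by a square root) and the sharp constant $\pi$ with denominator $\log\log D$: one must exploit the tension between "many prime factors" and "high multiplicities", since $D$ has at most $\log D/\log\log D$ prime factors only when those primes are roughly of size $\log D$, in which case each appears with multiplicity close to $1$. The dyadic-splitting argument above is the clean way to capture this; alternatively one can argue by a direct Lagrange-multiplier/extremal analysis that the sum $\sum \sqrt{\nu_p}$ subject to $\sum \nu_p \log p = \ell$ with all $p$ prime is maximized by taking $\nu_p = 1$ on the first $\omega$ primes and optimizing over $\omega$, which again leads to $\omega \approx \ell/\log\log D$ and each $\sqrt{\nu_p} = 1$. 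I would also remark that the $o(1)$ is uniform and that the degenerate cases ($D = 1$, or $D$ a prime power, where $f(D) = \exp(\pi\sqrt{\nu_p(D)})/\nu_p(D)$ with $\log f(D) = O(\sqrt{\log D}) = o(\log D/\log\log D)$) are trivially consistent with the bound, so the interesting regime is $\omega(D) \to \infty$.
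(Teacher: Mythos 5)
Your opening moves --- taking logarithms, discarding the $-\log\nu_p(D)$ terms, and reducing to the bound $\sum_{p\mid D}\sqrt{\nu_p(D)}\le(1+o(1))\log D/\log\log D$ --- are the same as the paper's, which then applies Cauchy--Schwarz globally to get $\pi\sqrt{\omega(D)\Omega(D)}$ and appeals to extremality of primorials. Your observation that a na\"ive global Cauchy--Schwarz only yields $\ell/\sqrt{\log\log D}$ (writing $\ell=\log D$), i.e.\ loses a square root in the denominator, is correct and well taken; refining the estimate by splitting the primes by size is a legitimate way to close that gap.

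However, your execution of the split contains a genuine error in the small-prime half. The condition $\log p<\log\log D$ is equivalent to $p<\log D$, so the number of ``small'' primes dividing $D$ is at most $\pi(\log D)\sim\log D/\log\log D$, not $\pi(\log\log D)=O(\log\log D/\log\log\log D)$ as you assert: you have conflated $p<\log\log D$ with $\log p<\log\log D$. With the correct count, your term-by-term bound $\sqrt{\nu_p(D)}\le\sqrt{\ell/\log 2}$ gives a small-prime contribution of order $\ell^{3/2}/\log\log D$, which swamps the target $\ell/\log\log D$; even Cauchy--Schwarz on this range gives only $\sqrt{\pi(\log D)\cdot\ell/\log 2}\asymp\ell/\sqrt{\log\log D}$, no better than the global estimate you rejected. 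If instead you lower the threshold to $p<\log\log D$ so that your small-prime count becomes correct, the large-prime half breaks: primes $p\ge\log\log D$ satisfy only $\log p\ge\log\log\log D$, and your Cauchy--Schwarz argument there yields $\ell/\log\log\log D$. The idea is salvageable with a cutoff chosen between these two, e.g.\ $y=\log D/(\log\log D)^3$: for $p>y$ one has both $\#\{p\mid D: p>y\}\le\ell/\log y$ and $\sum_{p>y}\nu_p(D)\le\ell/\log y$ with $\log y=(1-o(1))\log\log D$, so Cauchy--Schwarz gives $(1+o(1))\ell/\log\log D$, while for $p\le y$ Cauchy--Schwarz gives $\sqrt{\pi(y)\cdot\ell/\log 2}=O\left(\ell/(\log\log D)^{3/2}\right)=o(\ell/\log\log D)$. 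With that repair the argument is complete; as written, it is not.
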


\begin{proof}
    Applying Cauchy-Schwarz, we get
    $$
    \log f(D) \le \pi \sum_{p \mid D} \sqrt{\nu_p(D)}
    \le \pi \sqrt{\omega(D) \Omega(D)}.
    $$
    Since $\omega(D) \Omega(D)$ is maximized at primorials, the result follows.
\end{proof}

\subsection{Proof of Theorem~\ref{thm:main theorem}}

We are now ready to prove Theorem~\ref{thm:main theorem}. 

\begin{proof}
    We combine the results from Corollary~\ref{cor:rep with hanke}, Theorem~\ref{thm:odd local symbols count} and Theorem~\ref{thm:dyadic local symbols count}. Here, we use $e_p$ as a shorthand for $\nu_p (D)$, and let $\lambda = \tfrac{3+\sqrt{17}}{2}$. 
    Since the signature has $O(n)$ possible values, the counts of local symbols in Theorem~\ref{thm:odd local symbols count} (for odd primes) and in Theorem~\ref{thm:dyadic local symbols count} (for $p = 2$) show that there are at most
    $
    O\left(n \lambda^{e_2} f(D) \right)
    $
    genera with rank $n$ and determinant $D$, where $f(D)$ is as in \eqref{eq: arith function}. By Corollary~\ref{cor:rep with hanke}, computing a genus in each representative takes
    \[
    O\left(n^3 \frac{\log D}{\log \log D}+n^\psi \log D+n\frac{(\log D)^3}{\log \log D}\right) = 
    O\left(n^3 \frac{\log D}{\log \log D} \right)
    \]
    time, where we use the assumption that $\log D \ll n$ to see the first term is dominating.
    Multiplying everything together, the running time complexity is
     \[
     O \left( n^4 \lambda^{e_2} f(D) \frac{\log D}{\log \log D}
     \right)
     .\]
    Using Lemma~\ref{lem: num genus symbols} and $e_2 \le \log_2 D$, this is the same as
    $
    n^4 \lambda^{\log_2 D} D^{\frac{\pi (1 + o(1)) }{\log \log D}}
    $,
    which can be further simplified to
    $
    n^4 D^{\log_2 \lambda + \frac{\pi (1 + o(1)) }{\log \log D}},
    $
    proving the claim.
\end{proof}

\subsection{Proof of Theorem~\ref{thm:main theorem-upto}.}

While a naive application of Theorem 1.1 would increase the exponent of $D$ by $1$,  refining the above estimates, we give a proof of Theorem~\ref{thm:main theorem-upto}.

\begin{proof}
Fix $d \le D$.
By Corollary~\ref{cor:rep with hanke}, Algorithm~\ref{alg:sage_representative} has time complexity
\[
    O\left(n^3 \omega(d) + n^2 \Omega(d) + n^\psi \sum_{p\mid d}\log(\nu_p(d)) + n\omega(d)\Omega(d) \log d\right)
    = O(n^3 \omega(D)),
\]
where we use the hypothesis $\log D \ll n$ to see that $n^3\omega(d)$ is the dominant term.

Therefore, applying Theorem~\ref{thm:odd local symbols count} and Theorem~\ref{thm:dyadic local symbols count} again, the running time complexity for finding representatives for each genus with a given $d$ is
$$
O \left( n^4 \lambda^{e_2} f(d) \omega(d) \right) = n^4 d^{\log_2 \lambda} f(d) \omega(d).
$$
Let us denote
$$
L(z,s) =  \sum_{n=1}^{\infty} \frac{f(n) z^{\omega(n)}}{n^s}
\prod_{p} \sum_{k=0}^{\infty} \frac{f(p^k) z}{p^{ks}}
=  \prod_{p} \sum_{k=0}^{\infty} \frac{z e^{\pi \sqrt{k}}}{p^{ks}}.
$$
Writing $L(z,s) = \zeta(s)^{ze^{\pi}} G(z,s)$, we have
$$
\sum_{d=1}^{\infty} \frac{f(d) \omega(d)}{d^s} = \frac{\partial}{\partial z} L(z,s) \vert_{z = 1}
 = \zeta(s)^{e^{\pi}} \frac{\partial}{\partial z} G(z,s) \vert_{z = 1} + e^{\pi} \zeta(s)^{e^{\pi}} \log \zeta(s) G(1,s).
$$
It follows that
$$
\sum_{d=1}^{\infty} \frac{f(d) \omega(d)}{d^s} \sim e^{\pi} G(1,1) \zeta(s)^{e^{\pi}} \log \zeta(s)
\sim - e^{\pi} G(1,1) (s-1)^{-e^{\pi}} \log (s-1)
$$
as $s \to 1^+$. 
By \cite{Delange}*{Th{\'e}or{\'e}me IV}, it follows that
$$
\sum_{d=1}^{D} f(d) \omega(d) \sim e^{\pi} \frac{G(1,1)}{\Gamma(e^{\pi})} D (\log D)^{e^{\pi} - 1} \log \log D.
$$
Applying Abel summation then yields the result.
\end{proof}

\section{Acknowledgments}

The authors would like to thank Simon Brandhorst for his comments on an earlier version of this manuscript, the MIT PRIMES program for facilitating this research opportunity, and in particular Dr. Thomas R{\"u}d and Prof. Tanya Khovanova for their helpful comments.
Assaf was supported by a grant from the Simons Foundation (SFI-MPS-Infrastructure-00008651, AS).

\begin{bibdiv}

\begin{biblist}*{labels={alphabetic}}

\bib{BrandtIntrau}{article}{
   author={Brandt, Heinrich},
   author={Intrau, Oskar},
   title={Tabellen reduzierter positiver tern\"arer quadratischer Formen},
   language={German},
   journal={Abh. S\"achs. Akad. Wiss. Math.-Nat. Kl.},
   volume={45},
   date={1958},
   number={4},
   pages={261},
}

\bib{Buell}{book}{
   author={Buell, Duncan A.},
   title={Binary Quadratic Forms: Classical Theory and Modern Computations},
   publisher={Springer-Verlag},
   date={1989},
   pages={xii+249},
   isbn={978-0-387-96997-8}
}

\bib{Cassels}{book}{
   author={Cassels, J. W. S.},
   title={Rational quadratic forms},
   series={London Mathematical Society Monographs},
   volume={13},
   publisher={Academic Press, Inc. [Harcourt Brace Jovanovich, Publishers],
   London-New York},
   date={1978},
   pages={xvi+413},
   isbn={0-12-163260-1},
}

\bib{ConwaySloane}{book}{
   author={Conway, J. H.},
   author={Sloane, N. J. A.},
   title={Sphere packings, lattices and groups},
   series={Grundlehren der mathematischen Wissenschaften [Fundamental
   Principles of Mathematical Sciences]},
   volume={290},
   edition={3},
   note={With additional contributions by E. Bannai, R. E. Borcherds, J.
   Leech, S. P. Norton, A. M. Odlyzko, R. A. Parker, L. Queen and B. B.
   Venkov},
   publisher={Springer-Verlag, New York},
   date={1999},
   pages={lxxiv+703},
   isbn={0-387-98585-9},
   doi={10.1007/978-1-4757-6568-7},
}

\bib{Cox}{book}{
   author={Cox, David A.},
   title={Primes of the Form $x^2 + ny^2$: Fermat, Class Field Theory, and Complex Multiplication},
   edition={3},
   publisher={American Mathematical Society},
   date={2022},
   pages={xii+356},
   isbn={978-1-4704-6947-9},
}

\bib{Delange}{article}{
   author={Delange, Hubert},
   title={G\'en\'eralisation du th\'eor\`eme de Ikehara},
   language={French},
   journal={Ann. Sci. \'Ecole Norm. Sup. (3)},
   volume={71},
   date={1954},
   pages={213--242},
   issn={0012-9593},
}

\bib{DubeyHolenstein}{article}{
  author       = {Dubey, Chandan K.},
  author       = {Holenstein, Thomas},
  title        = {Generating a Quadratic Forms from a Given Genus},
  journal      = {CoRR},
  volume       = {abs/1409.6913},
  year         = {2014},
  url          = {http://arxiv.org/abs/1409.6913},
  eprint       = {1409.6913}
}

\bib{DubeyHolenstein1}{article}{
  author       = {Dubey, Chandan K.},
  author       = {Holenstein, Thomas},
  title        = {Sampling a Uniform Random Solution of a Quadratic Equation Modulo $p^k$},
  journal      = {CoRR},
  volume       = {abs/1404.0281},
  year         = {2014},
  url          = {http://arxiv.org/abs/1404.0281},
  eprint       = {1404.0281}
}

\bib{DubeyHolenstein2}{article}{
  author       = {Dubey, Chandan K.},
  author       = {Holenstein, Thomas},
  title        = {Computing the $p$-adic Canonical Quadratic Form in Polynomial Time},
  journal      = {CoRR},
  volume       = {abs/1409.6199},
  year         = {2014},
  url          = {http://arxiv.org/abs/1409.6199},
  eprint       = {1409.6199}
}

\bib{EarnestHaensch}{article}{
   author={Earnest, Andrew G.},
   author={Haensch, Anna},
   title={Classification of one-class spinor genera for quaternary quadratic
   forms},
   journal={Acta Arith.},
   volume={191},
   date={2019},
   number={3},
   pages={259--287},
   issn={0065-1036},
   doi={10.4064/aa180309-19-2},
}

\bib{Eichler}{article}{
   author={Eichler, Martin},
   title={Die \"Ahnlichkeitsklassen indefiniter Gitter},
   language={German},
   journal={Math. Z.},
   volume={55},
   date={1952},
   pages={216--252},
   issn={0025-5874},
   doi={10.1007/BF01268656},
}

\bib{Gauss}{book}{
   author={Gauss, Carl Friedrich},
   title={Disquisitiones Arithmeticae},
   publisher={Gerh. Fleischer, jun.},
   date={1801},
   pages={xvi+668},
   language={Latin}
}

\bib{Granovsky2007}{article}{
   author={Granovsky, Boris L.},
   author={Stark, Dudley},
   author={Erlihson, Michael},
   title={Meinardus' theorem on weighted partitions: extensions and a
   probabilistic proof},
   journal={Adv. in Appl. Math.},
   volume={41},
   date={2008},
   number={3},
   pages={307--328},
   issn={0196-8858},
   doi={10.1016/j.aam.2007.11.001},
}

\bib{HankeNotes}{misc}{
    author = {Hanke, Jonathan},
    title = {Quadratic Forms and Automorphic Forms \allowbreak– Expanded Notes from the 2009 Arizona Winter School},
    date = {2011},
    institution = {2009 Arizona Winter School},
    note={\url{https://swc-math.github.io/aws/2009/09HankeNotes.pdf}}
}

\bib{Hanke}{article}{
   author={Hanke, Jonathan},
   title={Algorithms for computing maximal lattices in bilinear (and
   quadratic) spaces over number fields},
   conference={
      title={Diophantine methods, lattices, and arithmetic theory of
      quadratic forms},
   },
   book={
      series={Contemp. Math.},
      volume={587},
      publisher={Amer. Math. Soc., Providence, RI},
   },
   isbn={978-0-8218-8318-1},
   date={2013},
   pages={111--130},
   doi={10.1090/conm/587/11697},
}

\bib{HR17Circle}{article}{
   author={Hardy, G. H.},
   author={Ramanujan, S.},
   title={Asymptotic formul\ae\ in combinatory analysis [Proc. London Math.
   Soc. (2) {\bf 16} (1917), Records for 1 March 1917]},
   conference={
      title={Collected papers of Srinivasa Ramanujan},
   },
   book={
      publisher={AMS Chelsea Publ., Providence, RI},
   },
   isbn={0-8218-2076-1},
   date={1917},
   pages={244},
}

\bib{Harvey-Hoeven}{article} {
    author = {Harvey, David},
    author = {van der Hoeven, Joris},
    title = {{Integer multiplication in time $O(n\mathrm{log}\, n)$}},
    volume = {193},
    journal = {Annals of Mathematics},
    number = {2},
    publisher = {Department of Mathematics of Princeton University},
    pages = {563 -- 617},
    year = {2021},
    doi = {10.4007/annals.2021.193.2.4},
    URL = {https://doi.org/10.4007/annals.2021.193.2.4}
}

\bib{Jones35}{article}{
  title={A Table of Eisenstein-reduced Positive Ternary Quadratic Forms of Determinant},
  author={Jones, Burton W.},
  number={97-98},
  year={1935},
  publisher={National research council of the National academy of Sciences}
}

\bib{Kirschmer}{thesis}{
  title={Definite quadratic and hermitian forms with small class number},
  author={Kirschmer, Markus},
  type={Habilitation},
  organization={RWTH Aachen University},
  year={2016}
}

\bib{KirschmerLorch}{article}{
   author={Kirschmer, Markus},
   author={Lorch, David},
   title={Single-class genera of positive integral lattices},
   journal={LMS J. Comput. Math.},
   volume={16},
   date={2013},
   pages={172--186},
   doi={10.1112/S1461157013000107},
}

\bib{LMFDB}{misc}{
   label={LMFDB},
   author={{The LMFDB Collaboration}},
   title={{The L-functions and Modular Forms Database}},
   year={2026},
   note={\url{https://www.lmfdb.org/}},
}

\bib{Meinardus}{article}{
   author={Meinardus, G\"unter},
   title={Asymptotische Aussagen \"uber Partitionen},
   language={German},
   journal={Math. Z.},
   volume={59},
   date={1954},
   pages={388--398},
   issn={0025-5874},
   doi={10.1007/BF01180268},
}

\bib{MirMor2009}{misc}{
    author={Miranda, Rick},
    author={Morrison, David R.},
    title={Embeddings of Integral Quadratic Forms},
    year={2009},
    note={Available at \url{https://web.math.ucsb.edu/~drm/manuscripts/eiqf.pdf}},
    pages={229},
}

\bib{NebeSloane}{misc}{
    author={Nebe, Gabriele},
    author={Sloane, Neil J.A.},
    title={Catalogue of lattices},
    note={Available at \url{http://www.research.att.com/∼njas/lattices/abbrev.html}, last accessed January 2026.}
}

\bib{Nipp}{book}{
   author={Nipp, Gordon L.},
   title={Quaternary quadratic forms},
   note={Computer generated tables;
   With a $3.5''$ IBM PC floppy disk},
   publisher={Springer-Verlag, New York},
   date={1991},
   pages={viii+155},
   isbn={0-387-97601-9},
   doi={10.1007/978-1-4612-3180-6},
}

\bib{OMeara}{book}{
   author={O'Meara, O. Timothy},
   title={Introduction to Quadratic Forms},
   series={Die Grundlehren der mathematischen Wissenschaften},
   volume={117},
   publisher={Springer-Verlag},
   date={1963},
   pages={x+342},
   isbn={978-3-642-64937-7}
}

\bib{PleskenSouvignier}{article}{
   author={Plesken, W.},
   author={Souvignier, B.},
   title={Computing isometries of lattices},
   note={Computational algebra and number theory (London, 1993)},
   journal={J. Symbolic Comput.},
   volume={24},
   date={1997},
   number={3-4},
   pages={327--334},
   issn={0747-7171},
   doi={10.1006/jsco.1996.0130},
}

\bib{SageMath}{manual}{
    label = {SageMath},
    author={Developers, The~Sage},
    title={{S}agemath, the {S}age {M}athematics {S}oftware {S}ystem ({V}ersion 10.5)},
    date={2024},
    note={{\tt https://www.sagemath.org}},
}

\bib{SchulzePillot}{inproceedings}{
  title={An algorithm for computing genera of ternary and quaternary quadratic forms},
  author={Schulze-Pillot, Rainer},
  booktitle={Proceedings of the 1991 international symposium on Symbolic and algebraic computation},
  pages={134--143},
  year={1991}
}

\bib{Shanks}{article}{
    author = {Daniel Shanks},
    title = {Five Number-theoretic Algorithms},
    journal = {Proceedings of the Second Manitoba Conference on Numerical Mathematics},
    pages = {51--70},
    year = {1973}
}

\bib{Storjohann}{inproceedings}{
    author = {Storjohann, Arne},
    title = {Near optimal algorithms for computing Smith normal forms of integer matrices},
    year = {1996},
    isbn = {0897917960},
    publisher = {Association for Computing Machinery},
    address = {New York, NY, USA},
    url = {https://doi.org/10.1145/236869.237084},
    doi = {10.1145/236869.237084},
    booktitle = {Proceedings of the 1996 International Symposium on Symbolic and Algebraic Computation},
    pages = {267–274},
    series = {ISSAC '96}
}

\bib{Voight}{book}{
   author={Voight, John},
   title={Quaternion algebras},
   series={Graduate Texts in Mathematics},
   volume={288},
   publisher={Springer, Cham},
   date={2021},
   pages={xxiii+885},
   isbn={978-3-030-56692-0},
   isbn={978-3-030-56694-4},
   doi={10.1007/978-3-030-56694-4},
}

\end{biblist}
    
\end{bibdiv}

\end{document}